\newdimen\bibspace
\renewenvironment{thebibliography}[1]{%
 \section*{\refname %or \bibname if you use ``book'' as the documentclass
       \@mkboth{\MakeUppercase\refname}{\MakeUppercase\refname}}%
     \list{\@biblabel{\@arabic\c@enumiv}}%
          {\settowidth\labelwidth{\@biblabel{#1}}%
           \leftmargin\labelwidth
           \advance\leftmargin\labelsep
           \itemsep\bibspace
           \parsep\z@skip     %
           \@openbib@code
           \usecounter{enumiv}%
           \let\p@enumiv\@empty
           \renewcommand\theenumiv{\@arabic\c@enumiv}}%
     \sloppy\clubpenalty4000\widowpenalty4000%
     \sfcode`\.\@m}
    {\def\@noitemerr
      {\@latex@warning{Empty `thebibliography' environment}}%
     \endlist}
\newtheorem{thm}{Theorem}[section]
\newtheorem{lem}{Lemma}[section]
\newtheorem{prop}{Proposition}[section]
\newtheorem{cor}{Corollary}[section]
\def\XXint#1#2#3{{\setbox0=\hbox{$#1{#2#3}{\int}$}
  \vcenter{\hbox{$#2#3$}}\kern-.5\wd0}}
\newcommand{\al}{\alpha}                \newcommand{\lda}{\lambda}
\newcommand{\om}{\Omega}                \newcommand{\pa}{\partial}
\newcommand{\va}{\varepsilon}           \newcommand{\ud}{\mathrm{d}}
\newcommand{\be}{\begin{equation}}      \newcommand{\ee}{\end{equation}}
\newcommand{\w}{\omega}                 \newcommand{\ep}{\epsilon}
\newcommand{\R}{\mathbb{R}}
\title{\textbf{A sharp Sobolev trace inequality involving the mean curvature on Riemannian manifolds}}
\author{\medskip Tianling Jin \  and \
Jingang Xiong}
\begin{document}

\maketitle

\begin{abstract}
In this paper, we examine the boundary $L^2$ term of the sharp Sobolev trace inequality $\|u\|_{L^{q}(\pa M)}^2\leq S \|\nabla_g u\|_{L^2(M)}^2 +A(M,g)\|u\|^2_{L^2(\pa M)}$ on Riemannian manifolds $(M,g)$ with boundaries $\pa M$, where $q=\frac{2(n-1)}{n-2}$, $S$ is the best constant and $A(M,g)$ is some positive constant depending only on $M$ and $g$. We obtain a sharp trace inequality involving the mean curvature in a remainder term, which would fail in general once the mean curvature is replaced by any smaller function.
\end{abstract}

\section{Introduction} 

Recall that the sharp Sobolev trace inequality in the upper half space asserts
\be\label{eq:eucl1}
\left(\int_{\pa \R^n_+} |u|^{\frac{2(n-1)}{n-2}}\,\ud x'\right)^{\frac{n-2}{n-1}}\leq S\int_{\R^n_+}|\nabla u |^2\,\ud x\quad \forall ~u\in C_c^1(\R^n_+\cup \pa\R^n_+ ), 
\ee
where $S=
\frac{2}{n-2}\w_n^{-1/(n-1)}$, $x=(x', x_n)$, 
and $\w_n$ is the volume of the unit sphere in $\R^n$, $n\ge 3$. The best constant and the extremal functions of \eqref{eq:eucl1} were found by Escobar \cite{E88} and Beckner \cite{Be} independently. Indeed, as pointed out in \cite{Be}, \eqref{eq:eucl1} follows from the sharp fractional Sobolev inequality, because
\be\label{eq:fineq}
\left(\int_{\R^{n-1}} v^{\frac{2(n-1)}{n-2}}\,\ud x'\right)^{\frac{n-2}{n-1}}\leq S \|v\|^2_{\dot{H}^{1/2}(\R^{n-1})}
\ee
and 
\[
 \|v\|^2_{\dot{H}^{1/2}(\R^{n-1})} = \int_{\R^n_+}|\nabla u |^2\,\ud x, 
\]
where $u$ is the harmonic extension of $v$ in the upper half space, and the equality of \eqref{eq:fineq} holds if and only if $v(x')$ takes the form  
\[
v(x')=c \left(\frac{\lda}{1+\lda^2|x'-x'_0|}\right)^{\frac{n-2}{2}} 
\]
for some $c\in \R$, $\lda>0$ and $x_0'\in \R^{n-1}$, which is proved by Lieb \cite{Lie83}.

For any bounded smooth domain $\om\subset\R^n$,  the sharp trace inequality
\be \label{eq:al}
\|u\|^2_{L^{\frac{2(n-1)}{n-2}}(\pa \om)}\leq S \|\nabla u\|^2_{L^2(\om)}+A(\om) \|u\|^2_{L^2(\pa \om)}\quad \forall~ u\in H^1(\om),
\ee 
where $A(\om)>0$ depends only on $\om$, was established by Adimurthi-Yadava \cite{AY} for $n\ge 5$ and by Li-Zhu \cite{LZ97} on Riemannian manifolds in all dimensions. The following sharp Sobolev inequality  
\be\label{eq:bl}
\|u\|^2_{L^{\frac{2n}{n-2}}(\om)}\leq 2^{\frac{2}{n}}S_n \|\nabla u\|^2_{L^2(\om)}+A(\om) \|u\|^2_{L^2(\pa \om)}\quad \forall~ u\in H^1(\om),
\ee
where $S_n$ is the best Sobolev constant, was initially proved by Brezis-Lieb \cite{BL} for $\om$ being the unit ball, and later by Adimurthi-Yadava \cite{AY} for $n\geq 5$ and by Li-Zhu \cite{LZ98} on Riemannian manifolds in all dimensions.  We also obtained a similar sharp weighted trace inequalities in \cite{JX_b}, and some remainder terms in sharp fractional Sobolev inequalities in \cite{JX_a}.

In this and a forthcoming paper, we would like to examine the boundary $L^2$ term in \eqref{eq:al} and \eqref{eq:bl} further. The main result of the present paper is as follows.

\begin{thm} Let $\om$ be a bounded smooth domain in $\R^n$ with $n\geq 5$. There exists a positive constant $A(\om)$ depending only on $\om$ such that for all $u\in H^1(\Omega)$,
\be\label{eq:main tr}
\left(\int_{\pa \om}|u|^q\,\ud s\right)^{2/q}\leq S\left(\int_{\om}|\nabla u|^2\,\ud x+ \frac{n-2}{2}\int_{\pa \om}h u^2\,\ud s\right)+A(\om)\|u\|^2_{L^r(\pa \om)}, 
\ee
where $q=\frac{2(n-1)}{n-2}$, $r=q'=\frac{2(n-1)}{n}$ and $h(x)$ is the mean curvature of $\pa \om$ at $x$.  
\end{thm}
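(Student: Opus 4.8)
I would argue by contradiction and blow-up; the conceptual core is a Pohozaev identity for the extremal of \eqref{eq:eucl1} showing that, at the first non-trivial order of the blow-up, the boundary term $\frac{n-2}{2}\int_{\pa\om}hu^2$ exactly compensates the effect of the curvature of $\pa\om$ on the Dirichlet energy.

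Suppose \eqref{eq:main tr} failed for every $A>0$. Then there would exist $u_k\in H^1(\om)$ with $\int_{\pa\om}|u_k|^q\,\ud s=1$ and
\[
1>S\Big(\int_\om|\nabla u_k|^2\,\ud x+\frac{n-2}{2}\int_{\pa\om}hu_k^2\,\ud s\Big)+k\,\|u_k\|_{L^r(\pa\om)}^2 .
\]
Since $h$ is bounded and $\int_{\pa\om}u_k^2\,\ud s\leq|\pa\om|^{1-2/q}$, this bounds $\int_\om|\nabla u_k|^2\,\ud x$, so $\{u_k\}$ is bounded in $H^1(\om)$; along a subsequence $u_k\rightharpoonup u_0$ weakly in $H^1(\om)$, strongly in $L^2(\om)$ and, since $2<q$, strongly in $L^2(\pa\om)$. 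Then $\|u_k\|_{L^r(\pa\om)}\to0$ forces $u_0\equiv0$ on $\pa\om$, and combining the Brezis--Lieb splittings of $\int_{\pa\om}|u_k|^q\,\ud s$ and of $\int_\om|\nabla u_k|^2\,\ud x$ with \eqref{eq:al} applied to $u_k-u_0$ gives $u_0\equiv0$, $S\int_\om|\nabla u_k|^2\,\ud x\to1$ and $\|u_k\|_{L^r(\pa\om)}=o(k^{-1/2})$. Thus $\{u_k\}$ is a concentrating minimizing sequence for the trace quotient.

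By the profile decomposition for the embedding $H^1(\om)\hookrightarrow L^q(\pa\om)$ — using the strict superadditivity $\sum_j t_j^{2/q}>(\sum_j t_j)^{2/q}$ and the fact that the Dirichlet energy of a single unit-mass bubble equals $S^{-1}$ — the sequence reduces, modulo an $H^1(\om)$-negligible remainder, to a single bubble: there are $x_k\to x_0\in\pa\om$ and $\mu_k\to0^+$ such that in suitable Fermi (boundary normal) coordinates $z=(z',z_n)$ at $x_0$, after recentering, the rescalings $v_k(z):=\mu_k^{(n-2)/2}u_k(\mu_k z)$ converge strongly in $\dot H^1(\R^n_+)$ to a fixed multiple of the standard bubble $U$ — the harmonic extension of the extremal in \eqref{eq:fineq} — which is radial in $z'$, satisfies $\pa_{z_n}U=-c_0U^{q-1}$ on $\pa\R^n_+$ and decays like $|z|^{2-n}$. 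Write $u_k=\al_kW_k+w_k$, where $W_k$ is the bubble of scale $\mu_k$ centered at $x_k$, transplanted through the Fermi chart and cut off so that $\pa_\nu W_k=-c_0W_k^{q-1}$ holds exactly on $\pa\om$, and where $\al_k,\mu_k,x_k$ are chosen so that $w_k$ is orthogonal — in $\dot H^1(\om)$, and in $L^2(\pa\om)$ for its component along $W_k$ — to $W_k$ and to the $n-1$ translation and one dilation Jacobi fields of the bubble; then $\|w_k\|_{H^1(\om)}\to0$, and the non-degeneracy of $U$ (the kernel of the linearized trace operator at $U$ being spanned exactly by those $n$ Jacobi fields) gives a coercivity estimate $S\int_\om|\nabla w|^2\,\ud x-Sc_0(q-1)\int_{\pa\om}W_k^{q-2}w^2\,\ud s\geq\delta_0\int_\om|\nabla w|^2\,\ud x$ with $\delta_0>0$, for $w$ in that orthogonal complement.

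Now expand $D_k:=\big(\int_{\pa\om}|u_k|^q\,\ud s\big)^{2/q}-S\int_\om|\nabla u_k|^2\,\ud x-S\frac{n-2}{2}\int_{\pa\om}hu_k^2\,\ud s$ around $W_k$ in powers of $\mu_k$ and of $w_k$. From the Fermi expansion $g_{\al\beta}(z',z_n)=g_{\al\beta}(z',0)-2z_n\,\mathrm{II}_{\al\beta}(z')+O(z_n^2)$ one gets $\int_\om|\nabla W_k|^2\,\ud x=\int_{\R^n_+}|\nabla U|^2+\mu_k\,\Theta(x_0)+O(\mu_k^2)$, $\int_{\pa\om}|W_k|^q\,\ud s=\int_{\pa\R^n_+}U^q+O(\mu_k^2)$ and $\int_{\pa\om}hW_k^2\,\ud s=h(x_0)\mu_k\int_{\pa\R^n_+}U^2+O(\mu_k^2)$, where by the radial symmetry of $U$ the constant $\Theta(x_0)$ is a fixed multiple of $h(x_0)$. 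Since $U$ is an extremal of \eqref{eq:eucl1} the $O(1)$ parts cancel, and the $O(\mu_k)$ parts cancel precisely by the Pohozaev identity for $U$ (test $\Delta U=0$ against a suitable combination of $z_n\pa_{z_n}U$ and $z\cdot\nabla U$, and use $\pa_{z_n}U=-c_0U^{q-1}$ and the decay of $U$)
\[
2\,\mathrm{II}_{\al\beta}(x_0)\!\!\int_{\R^n_+}\!\!z_n\,\pa_\al U\,\pa_\beta U-\mathrm{tr}(\mathrm{II})(x_0)\!\!\int_{\R^n_+}\!\!z_n|\nabla U|^2+\frac{n-2}{2}h(x_0)\!\!\int_{\pa\R^n_+}\!\!U^2=0 ,
\]
which is what fixes the coefficient $\frac{n-2}{2}$. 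The linear-in-$w_k$ terms are $O(\mu_k\|w_k\|_{H^1(\om)})$ after using the chosen orthogonalities, the equation for $W_k$ and the decay of $U$; the quadratic-in-$w_k$ terms reduce, by the coercivity and a localization of $w_k$ near $x_0$, to at most $-\tfrac{\delta_0}{2}S\int_\om|\nabla w_k|^2\,\ud x$; and the higher-order terms are $o(\|w_k\|_{H^1(\om)}^2)$. Hence $D_k\leq C\mu_k^2$. On the other hand, from $\int_{\pa\R^n_+}U^r<\infty$ and the $L^q(\pa\R^n_+)$-convergence of the traces of $v_k$ one gets $\|u_k\|_{L^r(\pa\om)}\geq c\,\mu_k$ with $c>0$, the exponent $r=q'=\frac{2(n-1)}{n}$ being exactly the one for which $\|W_k\|_{L^r(\pa\om)}\sim\mu_k$. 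Therefore the displayed inequality gives $C\mu_k^2\geq D_k>k\,\|u_k\|_{L^r(\pa\om)}^2\geq c^2k\,\mu_k^2$, impossible for large $k$. The restriction $n\geq5$ enters because the integrals appearing above — e.g. $\int_{\pa\R^n_+}U^r$, $\int_{\R^n_+}z_n|\nabla U|^2$, $\int_{\R^n_+}|z|\,U^2$ and $\int_{\R^n_+}\big(|z|\,|D^2U|\big)^{2n/(n+2)}$ — converge exactly for $n\geq5$.

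I expect the main obstacle to be the bookkeeping in the expansion of $D_k$: keeping each error of order $\mu_k^2$, $\mu_k\|w_k\|_{H^1}$ or $o(\|w_k\|_{H^1}^2)$ — in particular controlling the boundary cross-terms $\int_{\pa\om}hW_kw_k$ (which is why one imposes the $L^2(\pa\om)$-orthogonality along $W_k$ and uses $\int_{\pa\R^n_+}|z|U^2<\infty$) and absorbing the non-sign-definite curvature perturbation $-S\frac{n-2}{2}\int_{\pa\om}hw_k^2$ of the quadratic form into the coercivity by localizing $w_k$. Secondary points are the non-degeneracy of the trace extremal $U$ (hence the coercivity, and the solvability by the implicit function theorem of the $n+1$ orthogonality conditions) and the exclusion of multi-bubble profiles, which follow respectively from the spectral analysis of the linearized trace equation and from the energy identity in the profile decomposition. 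The same expansion also yields the sharpness asserted in the abstract: replacing $h$ by a continuous $\tilde h\leq h$ with $\tilde h(x_*)<h(x_*)$ and testing the would-be inequality on bubbles concentrating at $x_*$ produces a defect $D_k\sim S\frac{n-2}{2}\big(h(x_*)-\tilde h(x_*)\big)\mu_k\int_{\pa\R^n_+}U^2$, which dominates $\|u_k\|_{L^r(\pa\om)}^2\sim\mu_k^2$.
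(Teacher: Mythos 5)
Your overall architecture matches the paper's: argue by contradiction, blow up at a boundary concentration point, decompose the competitor as a bubble plus a remainder orthogonal to the (approximate) kernel of the linearization, use non-degeneracy/coercivity to control the quadratic part, expand the trace quotient to second order so the deficit is $O(\mu_k^2)$, and contradict this against $\|u_k\|_{L^r(\pa\om)}\gtrsim\mu_k$ (your Young-type absorption of the linear term into the coercive quadratic form is a legitimate shortcut around the paper's Proposition \ref{prop:energy est 1}). But you deviate in two load-bearing places, and both are exactly where the paper spends its effort. First, you never pass to a minimizer: the paper minimizes the $\al$-perturbed functional, gets the Euler--Lagrange equation \eqref{el equ}, and from it elliptic estimates, $C^1_{loc}$ convergence of the rescalings to $U$, and the pointwise bound $u_\al\leq C\mu_\al^{(n-2)/2}\mathrm{dist}(x,x_\al)^{2-n}$ via Neumann functions and Moser iteration (Proposition \ref{prop:uniform estimates}); these are what justify the single-bubble closeness and the localization. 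You replace all of this by ``the profile decomposition for $H^1(\om)\hookrightarrow L^q(\pa\om)$'' applied to a raw contradicting sequence that satisfies no equation. The conclusion you want (one boundary bubble, remainder small in $H^1(\om)$, hence $L^q_{loc}$ trace convergence of the rescalings giving $\|u_k\|_{L^r}\geq c\mu_k$) is plausibly reachable for an exact minimizing sequence via Lions-type concentration compactness, but as written this is a black box, and it is the single largest unproved step in your argument; you should either prove that decomposition (including why interior profiles carry no trace mass and why the remainder is small in $H^1$, not just in $L^q(\pa\om)$) or follow the paper and work with minimizers, where compactness comes from the equation.

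Second, you keep the mean curvature in the expansion and cancel the $O(\mu_k)$ term by an asserted Pohozaev identity for $U$; the paper instead conformally changes the metric by $\psi_\al$ so that $h_{\hat g}=0$ near the concentration point, after which the first-order term $2\hat h^{ij}(0)\int z_n\pa_iU\pa_jU$ vanishes by symmetry because the second fundamental form is trace-free there (Lemma \ref{lem:buble check}), and the $\frac{n-2}{2}h_g$ term is absorbed exactly by the conformal covariance of $\pa_\nu+\frac{n-2}{2}h_g$. Your route is viable, but the identity you display is the crux that produces the coefficient $\frac{n-2}{2}$, it must also account for the first-order term of the interior volume element $\sqrt{\det g}=1-\mathrm{tr}(\mathrm{II})z_n+O(|z|^2)$, and you have not verified the constants; until that computation is done your proof does not actually establish the stated coefficient. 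Two smaller points: ``localizing $w_k$ near $x_0$'' does not absorb $-S\frac{n-2}{2}\int_{\pa\om}h w_k^2$, since $h$ need not vanish at $x_0$; the natural fix is the interpolation \eqref{interp}, $\int_{\pa\om}w_k^2\leq\va\|\nabla w_k\|_{L^2(\om)}^2+C(\va)\|w_k\|^2_{L^r(\pa\om)}$, with $\|w_k\|_{L^r(\pa\om)}\lesssim\|u_k\|_{L^r(\pa\om)}+\mu_k$ and the $\|u_k\|^2_{L^r}$ piece fed back into the contradiction inequality. Also the cross term $\int_{\pa\om}hW_kw_k$ is handled simply by H\"older with exponents $r=q'$ and $q$, giving $O(\mu_k\|w_k\|_{H^1})$, so the extra $L^2(\pa\om)$-orthogonality you impose is unnecessary.
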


The inequality \eqref{eq:main tr} also holds on Riemannian manifolds with boundaries. Namely, 

\begin{thm}\label{thm:Tr}  Let $(M,g)$ be a smooth compact $n$-dimensional Riemannian manifold with smooth boundary $\pa M$ for $n\ge 5$.
Then there exists a positive constant $A(M,g)$ depending only on $M, g$ such that for all $u\in H^1(M)$,
\be\label{main tr}
\left(\int_{\pa M}|u|^q\,\ud s_g\right)^{2/q}\leq S\left(\int_{M}|\nabla_g u|^2\,\ud v_g+ \frac{n-2}{2}\int_{\pa M}h_g u^2\,\ud s_g\right)+A\|u\|^2_{L^r(\pa M)}, 
\ee  where $h_g$ is the mean curvature of $\pa M$ with respect to the metric $g$, $\ud v_g$ is the volume form of $(M,g)$ and $\ud s_g$ is the induced volume form on $\pa M$.
\end{thm}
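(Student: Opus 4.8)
The strategy is the concentration--compactness and blow-up scheme of Li--Zhu \cite{LZ97} and Adimurthi--Yadava \cite{AY}: assume \eqref{main tr} fails, produce a family of extremals concentrating at a boundary point, and run a Pohozaev identity; the gain is that the coefficient $\frac{n-2}{2}$ and the exponent $r=q'$ are tuned so that the leading mean-curvature correction to the Dirichlet energy is absorbed and the surviving, positive, $L^r$ remainder---amplified by the diverging constant---forces a contradiction. Concretely, if \eqref{main tr} fails there are $A_k\to+\infty$ with
\[
m_k:=\inf\Big\{S\Big(\int_M|\nabla_g u|^2\,\ud v_g+\tfrac{n-2}{2}\int_{\pa M}h_g u^2\,\ud s_g\Big)+A_k\|u\|^2_{L^r(\pa M)}\ :\ \|u\|_{L^q(\pa M)}=1\Big\}<1.
\]
The functional is $2$-homogeneous; using \eqref{eq:al}, the sharpness of $S$ in \eqref{eq:eucl1}, the Brezis--Lieb lemma, and the elementary bound $\theta^{2/q}+(1-\theta)^{2/q}\ge 1$ on $[0,1]$ (here $2/q<1$), one checks that $m_k<1$ lies strictly below the first bubbling level, so $m_k$ is attained. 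A minimizer may be taken positive (replace $u$ by $|u|$, then use the strong maximum principle and the Hopf lemma) and, being a critical point, solves $-\Delta_g u_k=0$ in $M$ with $\frac{\pa u_k}{\pa\nu_g}+\frac{n-2}{2}h_g u_k+\frac{A_k}{S}\|u_k\|^{2-r}_{L^r(\pa M)}u_k^{\,r-1}=\frac{m_k}{S}u_k^{\,q-1}$ on $\pa M$. Since $A_k\|u_k\|^2_{L^r(\pa M)}\le m_k<1$, we get $\|u_k\|_{L^r(\pa M)}\to0$, hence $\|u_k\|_{L^2(\pa M)}\to0$ by interpolation ($r<2<q$), and then \eqref{eq:al} together with $m_k<1$ forces $\int_M|\nabla_g u_k|^2\,\ud v_g\to S^{-1}$.

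Next comes the blow-up. As $u_k$ is harmonic, $M_k:=\max_{\overline M}u_k=\max_{\pa M}u_k$; were $M_k$ bounded, elliptic estimates would give $u_k\to u_\infty$ in $C^0(\overline M)$ with $\|u_\infty\|_{L^q(\pa M)}=1$ but $\|u_\infty\|_{L^r(\pa M)}=0$, absurd, so $M_k\to\infty$. Pick $x_k\in\pa M$ with $u_k(x_k)=M_k$, $x_k\to x_0\in\pa M$, set $\mu_k=M_k^{-2/(n-2)}\to0$, and in Fermi coordinates centred at $x_k$ put $w_k(y)=M_k^{-1}u_k(\mu_k y)$. The identity $q-2=\frac{2}{n-2}$ gives $\mu_k M_k^{q-2}=1$, while the Robin and $L^r$ coefficients acquire extra positive powers of $\mu_k$; thus the rescaled equations converge and $w_k\to W$ in $C^2_{\mathrm{loc}}(\overline{\R^n_+})$, where $W>0$, $W(0)=1$, $\Delta W=0$ in $\R^n_+$ and $\pa_\nu W=S^{-1}W^{q-1}$ on $\pa\R^n_+$; by the Liouville classification underlying \eqref{eq:eucl1}, $W$ is a standard bubble with $\|\nabla W\|^2_{L^2(\R^n_+)}=S^{-1}\|W\|^2_{L^q(\pa\R^n_+)}$. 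Because $\int_M|\nabla_g u_k|^2\to S^{-1}$ and $\|u_k\|_{L^q(\pa M)}=1$, and since a bubble of $L^q(\pa M)$-mass $\theta_j$ costs Dirichlet energy $S^{-1}\theta_j^{2/q}$ with $\sum_j\theta_j\le1$ and $2/q<1$, there is room for exactly one bubble: for every $\delta>0$, $\int_{\pa M\setminus B_\delta(x_0)}u_k^q\,\ud s_g\to0$ and $\int_{M\setminus B_\delta(x_0)}|\nabla_g u_k|^2\,\ud v_g\to0$.

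The quantitative step is the core. First I would establish the sharp pointwise estimate $u_k(x)\le C M_k\big(1+M_k^{4/(n-2)}d_g(x,x_k)^2\big)^{-(n-2)/2}$ near $x_0$ by the usual bubbling-rate argument (selecting the natural scale and comparing with a radial barrier via the maximum principle), with $C$ independent of $k$. Using this and $n\ge5$, a direct computation on the bubble profile gives $\|u_k\|^2_{L^r(\pa M)}=c_1\mu_k^2+o(\mu_k^2)$, $c_1>0$: the choice $r=q'$ makes the powers of $\mu_k$ match, and convergence of $\int_{\R^{n-1}}(1+|z'|^2)^{-(n-2)r/2}\,\ud z'$ requires precisely $n\ge 5$. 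The key inequality is the lower bound
\[
\int_M|\nabla_g u_k|^2\,\ud v_g+\frac{n-2}{2}\int_{\pa M}h_g u_k^2\,\ud s_g\ \ge\ S^{-1}-C\mu_k^2 ,
\]
which I would extract from a Pohozaev identity for $u_k$ on a small half-geodesic ball $B^+_\delta(x_k)$, tested against the scaling generator $x\cdot\nabla_g u_k+\frac{n-2}{2}u_k$: the critical term $u_k^{q-1}$ drops out because $\frac{n-1}{q}=\frac{n-2}{2}$; the boundary integrals on $\R^n_+\cap\pa B_\delta$ are $O(\mu_k^{\,n-2})=o(\mu_k^2)$ by the decay estimate; the interior curvature remainders are $O(\mu_k^2)$ since $\int_{\R^n_+}(1+|z|^2)^{-(n-2)}\,\ud z<\infty$ for $n\ge5$; and the $O(\mu_k)$ contributions from the second fundamental form in the bulk metric and from the boundary term $\frac{n-2}{2}h_g u_k$ cancel, exactly as in Escobar's expansion of the boundary Yamabe functional \cite{E88} (equivalently, one may first pass to conformal Fermi coordinates making the mean curvature vanish at $x_0$, reducing to an almost-Euclidean situation). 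Granting these, $m_k=S\big(\int_M|\nabla_g u_k|^2+\frac{n-2}{2}\int_{\pa M}h_g u_k^2\big)+A_k\|u_k\|^2_{L^r(\pa M)}\ \ge\ 1-SC\mu_k^2+\tfrac{c_1}{2}A_k\mu_k^2\ >\ 1$ for $k$ large, contradicting $m_k<1$. This proves \eqref{main tr}, and the Euclidean statement \eqref{eq:main tr} is the special case of the flat metric on $\overline\om$.

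The main obstacle is exactly the displayed lower bound for the conformal Dirichlet energy, with error $O(\mu_k^2)$ and constant uniform in $k$. This is where both hypotheses enter: $n\ge5$ keeps the curvature corrections and every error integral genuinely of order $\mu_k^2$ rather than $\mu_k^2\log\mu_k^{-1}$ or larger (and keeps $\|u_k\|_{L^2(\pa M)}$, $\|u_k\|_{L^r(\pa M)}$ of the correct size), while $\frac{n-2}{2}$ is the unique coefficient for which the $O(\mu_k)$ mean-curvature correction to $\int_M|\nabla_g u_k|^2$ is absorbed---so that the residual error and the positive $L^r$ remainder are of the same order $\mu_k^2$ and the diverging $A_k$ wins. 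The same computation yields the sharpness claimed in the abstract: replacing $h_g$ by any $\tilde h$ strictly smaller at some point would leave an uncancelled negative term of order $\mu_k$, which a test function concentrating there would exploit to violate the inequality.
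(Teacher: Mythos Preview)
Your overall architecture---contradiction, existence of minimizers below the critical level, single-bubble concentration, sharp pointwise decay, and the final comparison $A_k\mu_k^2$ versus $C\mu_k^2$---matches the paper exactly, including the conformal change that kills the mean curvature at the concentration point. The divergence is in how you obtain the key lower bound
\[
\int_M|\nabla_g u_k|^2\,\ud v_g+\frac{n-2}{2}\int_{\pa M}h_g u_k^2\,\ud s_g\ \ge\ S^{-1}-C\mu_k^2,
\]
and here your proposal has a genuine gap.

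A Pohozaev identity for $u_k$ tested against $x\cdot\nabla u_k+\frac{n-2}{2}u_k$ produces a \emph{relation among integrals of $u_k$}; it does not by itself compare anything to the sharp constant $S^{-1}$. The cancellation of $O(\mu_k)$ mean-curvature contributions you describe is really a statement about the \emph{bubble's} energy in the perturbed metric (this is the content of the paper's Lemma~\ref{lem:buble check}), not about the energy of $u_k$. To pass from the bubble to $u_k$ with error $O(\mu_k^2)$ you must control the discrepancy $w_k=u_k-\text{(bubble)}$ in $H^1$ to order $O(\mu_k)$, and this is not supplied by pointwise decay plus Pohozaev: the decay $u_k\le CU_{\mu_k}$ only gives $\|w_k\|_{H^1}=o(1)$, which after squaring in the second variation leaves an uncontrolled $o(1)$ error, far too crude to beat $A_k\mu_k^2$.

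The paper closes this gap with a projection/coercivity argument that you do not mention. After the conformal change, one writes $u_k/\psi_k=\chi_k+t_k\sigma_{\xi_k,\lda_k}+w_k$ with $w_k$ orthogonal in $H_{0,L}$ to the bubble and to the tangent space of the bubble manifold; the second variation of the trace quotient is then \emph{uniformly coercive} on this orthogonal complement (the paper's Lemma~\ref{lem:positive def1}, descending from the spectral gap in Lemma~\ref{lem:cowhole}). Testing the equation for $w_k$ against $w_k$ and using coercivity yields the quantitative bound $\|w_k\|\le C\big(\mu_k+\varepsilon_k+\mu_k^{n-2}\big)$ of Proposition~\ref{prop:energy est 1}. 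Only with this in hand does the Taylor expansion $F(w_k)\ge F(0)-C\mu_k\|w_k\|$ combine with $F(0)=S^{-1}+O(\mu_k^2)$ to give your displayed inequality. In short: the missing idea is the coercivity of the linearized operator on the complement of the kernel directions, which is what upgrades the qualitative $\|w_k\|\to 0$ to the quantitative $\|w_k\|=O(\mu_k)$ that the $O(\mu_k^2)$ endgame requires.
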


The inequality \eqref{main tr} is sharp in the following sense. It would fail if $S$ is replaced by a smaller constant. In general, $h$ can not be replaced by any function which is smaller than $h$ at some point on $\pa M$, and $r$ can not be replaced by any smaller number.

The best constant $S$ has been demonstrated to be crucial in study of the Yamabe problem on manifolds with boundaries, see, e.g., Escobar \cite{E92a}.  The effect of mean curvatures in sharp Hardy-Sobolev inequality with a singularity on the boundary has been studied by Ghoussoub-Robert \cite{GR}.  In \cite{JX_b}, we proved a sharp weighted Sobolev trace inequality on Riemannian manifolds which would fail if the mean curvature is positive somewhere. The effect of scalar curvatures for sharp Sobolev inequalities on compact manifolds has been studied by Li-Ricciardi \cite{LR}, Hebey \cite{H_b} and references therein.

The sharp Sobolev inequality \eqref{main tr} is in the same spirit of a conjecture posed by Aubin \cite{Aubin76}, which has been confirmed through the work of Hebey-Vaugon \cite{HV}, Aubin-Li \cite{AL} and Druet \cite{Druet99}, \cite{Druet02}, see also  Druet-Hebey  \cite{DH} and Hebey \cite{H}.  The procedure to prove those types of inequalities is by contradiction. A key point is to derive the asymptotical behavior of extremal functions near their energy concentration points. However, in order to establish \eqref{main tr} it requires more precise estimates, in particular the error estimates between the extremal functions and some properly chosen bubbles. 

\bigskip

\noindent\textbf{Acknowledgements:}  Both authors would like to thank Professor YanYan Li
for his interest in the work and constant encouragement. Jingang Xiong was supported in part by the First Class Postdoctoral Science Foundation of China (No. 2012M520002).

\section{Preliminaries}

We prove inequality \eqref{main tr} by contradiction. Suppose that for every large $\al >1$, there exists $\tilde{u}\in H^1(M)$ such that
\[
 \left(\int_{\pa M}|\tilde{u}|^q\,\ud s_g\right)^{2/q}> S\left(\int_{M}|\nabla_g \tilde{u}|^2\,\ud v_g+ \frac{n-2}{2}\int_{\pa M}h_g \tilde{u}^2\right)+\al\|\tilde{u}\|^2_{L^r(\pa M,g)}.
\]
Define
\[
 I_\al(u)=\frac{\int_{M}|\nabla_g u|^2\,\ud v_g+ \frac{n-2}{2}\int_{\pa M}h_g u^2+ \al\|u\|^2_{L^r(\pa M,g)}}{\|u\|^2_{L^q(\pa M)}},
\quad \forall \ u\in H^1(M)\setminus H^1_0(M).
\]
It follows from the contradiction hypothesis that for all large $\al$,
\be\label{cont}
\ell_\al:=\inf_{H^1(M)\setminus H^1_0(M)}I_\al <\frac{1}{S}.
\ee
Even though the functional $I_\al$ involves critical Sobolev exponent, the above strict inequality implies the existence of a minimizer, namely,

\begin{prop}\label{existence of a minimizer} For every $\al >0$, $\ell_\al$ is achieved by a nonnegative $u_\al\in H^1(M)\setminus H^1_0(M)$ with
\[
 \int_{\pa M}u^q_\al\,\ud s_g=1.
\]
Consequently, $u_\al \in C^\infty(M)\cap C^{1,r-1}(\overline M)$ satisfies the Euler-Lagrange equation
\be\label{el equ}
\begin{cases}
- \Delta_g u_\al =0&\quad \mbox{on } M,\\
\frac{\pa_gu_\al}{\pa \nu}+\frac{n-2}{2}h_gu_\al+\al \|u_\al\|^{2-r}_{L^r(\pa M)}u_\al^{r-1}=\ell_\al u_\al^{q-1}&\quad  \mbox{on }\pa M,
\end{cases}
\ee
where $\pa_g/\pa \nu$ denotes the differentiation in the direction of the unit outer normal of $\pa M$ with respect to $g$.
\end{prop}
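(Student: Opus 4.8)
The plan is a standard Brezis--Nirenberg-type variational argument: I expect the strict inequality \eqref{cont} to force every minimizing sequence for $I_\al$ to be precompact, the sole obstruction being the failure of the trace embedding $H^1(M)\hookrightarrow L^q(\pa M)$ to be compact at the critical exponent $q$. (Note that since $\al\mapsto\ell_\al$ is nondecreasing, \eqref{cont} in fact holds for every $\al>0$.) I would begin with a minimizing sequence $\{u_j\}\subset H^1(M)\setminus H^1_0(M)$; since $I_\al(|u|)=I_\al(u)$, after replacing $u_j$ by $|u_j|$ and rescaling I may assume $u_j\ge0$ and $\int_{\pa M}u_j^q\,\ud s_g=1$. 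Boundedness of $\{u_j\}$ in $H^1(M)$ is routine: the numerator of $I_\al(u_j)$ tends to $\ell_\al$, the term $\tfrac{n-2}{2}\int_{\pa M}h_g u_j^2$ is $\ge-\tfrac{n-2}{2}\|h_g\|_{L^\infty(\pa M)}\|u_j\|_{L^2(\pa M)}^2$ with $\|u_j\|_{L^2(\pa M)}\le|\pa M|^{1/2-1/q}$ because $q>2$, and this bounds $\|\nabla_g u_j\|_{L^2(M)}$; a Poincaré-type inequality $\|u\|_{L^2(M)}\le C(\|\nabla_g u\|_{L^2(M)}+\|u\|_{L^1(\pa M)})$, proved by a routine compactness argument, then bounds $\|u_j\|_{L^2(M)}$.

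Passing to a subsequence, $u_j\rightharpoonup u_\al$ in $H^1(M)$, $u_j\to u_\al$ strongly in $L^2(M)$ and in $L^p(\pa M)$ for every $p<q$ (compactness of the trace into subcritical $L^p(\pa M)$), and a.e.; hence $u_\al\ge0$. With $v_j=u_j-u_\al$, the Brezis--Lieb lemma on $\pa M$ gives $\int_{\pa M}u_\al^q+\lim_j\int_{\pa M}v_j^q=1$ and weak $H^1$-convergence gives $\|\nabla_g u_j\|_{L^2}^2=\|\nabla_g u_\al\|_{L^2}^2+\|\nabla_g v_j\|_{L^2}^2+o(1)$; passing to a further subsequence, put $a=\int_{\pa M}u_\al^q$ and $b=\lim_j\int_{\pa M}v_j^q$, so $a,b\in[0,1]$ and $a+b=1$. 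Since $2<q$ and $r<q$, the terms $\tfrac{n-2}{2}\int_{\pa M}h_g u_j^2$ and $\al\|u_j\|_{L^r(\pa M)}^2$ pass to the limit, so, writing $N(\cdot)$ for the numerator of $I_\al$, one gets $\ell_\al=\lim_j N(u_j)=N(u_\al)+\lim_j\|\nabla_g v_j\|_{L^2}^2$. Next I would use the almost-optimal trace inequality: for each $\va>0$ there is $C_\va$ with $\|w\|_{L^q(\pa M)}^2\le(S+\va)\|\nabla_g w\|_{L^2(M)}^2+C_\va\|w\|_{L^2(\pa M)}^2$ for all $w\in H^1(M)$, which follows from the sharp Euclidean inequality \eqref{eq:eucl1} by a standard partition-of-unity and metric-freezing argument near $\pa M$. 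Applied to $v_j$, using $v_j\to0$ in $L^2(\pa M)$ and then letting $\va\downarrow0$, it gives $\lim_j\|\nabla_g v_j\|_{L^2}^2\ge\tfrac{1}{S} b^{2/q}$; and $N(u_\al)\ge\ell_\al\|u_\al\|_{L^q(\pa M)}^2=\ell_\al a^{2/q}$ by the definition of $\ell_\al$. Hence $\ell_\al\ge\ell_\al a^{2/q}+\tfrac{1}{S} b^{2/q}$.

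The main point is to rule out the concentration alternative $b>0$, and this is exactly where \eqref{cont} is used. Since $2/q=\tfrac{n-2}{n-1}<1$, we have $t^{2/q}\ge t$ for $t\in[0,1]$, so $1-a^{2/q}\le 1-a=b\le b^{2/q}$; combining with $\ell_\al(1-a^{2/q})\ge\tfrac{1}{S} b^{2/q}$ yields $\ell_\al(1-a^{2/q})\ge\tfrac{1}{S}(1-a^{2/q})$. If $a<1$ this forces $\ell_\al\ge\tfrac{1}{S}$, contradicting \eqref{cont}; if $a=1$ then $b=0$ directly. So $b=0$, i.e.\ $\int_{\pa M}u_\al^q=1$, whence $u_\al\in H^1(M)\setminus H^1_0(M)$ is admissible; by weak lower semicontinuity of $\|\nabla_g\cdot\|_{L^2(M)}$ and strong convergence of the lower-order terms, $N(u_\al)\le\liminf_j N(u_j)=\ell_\al$, so $I_\al(u_\al)=\ell_\al$ and $u_\al$ is the desired minimizer.

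For \eqref{el equ} and the regularity I would take the first variation $\tfrac{d}{dt}\big|_{t=0}I_\al(u_\al+t\phi)=0$ for $\phi\in C^\infty(\overline M)$, which gives the weak form of \eqref{el equ} with Lagrange multiplier $\ell_\al$, the factor $\|u_\al\|_{L^r(\pa M)}^{2-r}$ arising from differentiating $\|u_\al+t\phi\|_{L^r(\pa M)}^2$. Interior elliptic regularity for the harmonic function $u_\al$ gives $u_\al\in C^\infty(M)$, and $L^p$ and Schauder estimates for the linear Neumann problem near $\pa M$, bootstrapped from $u_\al^{q-1}\in L^{q'}(\pa M)=L^r(\pa M)$ up to $L^\infty$ and then to Hölder regularity of the trace, give $u_\al\in C^{1,r-1}(\overline M)$; the Hölder exponent is capped at $r-1$ by the term $u_\al^{r-1}$, which is only of class $C^{0,r-1}$ because $r-1<1$. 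I expect the only genuinely delicate step to be the exclusion of $b>0$ above, which hinges on the strict inequality $\ell_\al<1/S$ and the sharp Escobar--Beckner constant in \eqref{eq:eucl1}; everything else is standard.
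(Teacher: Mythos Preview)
Your argument is correct but takes a different route from the paper. The paper's proof is a one-line appeal to the \emph{subcritical approximation} method together with Moser iteration, citing Proposition~2.1 of Escobar~\cite{E92a}: one minimizes the analogue of $I_\al$ with $q$ replaced by $q-\va$, obtains a minimizer $u_{\al,\va}$ by compactness of the subcritical trace embedding, and then uses the strict inequality $\ell_\al<1/S$ inside a Moser iteration to get a uniform $L^\infty$ bound on $u_{\al,\va}$ as $\va\to0$, after which one passes to the limit. You instead run the \emph{direct} Brezis--Nirenberg argument: Brezis--Lieb splitting of the minimizing sequence plus the almost-optimal trace inequality (which the paper records separately as Lemma~\ref{lem:su ineq1}) to rule out the concentration alternative $b>0$. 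Both approaches pivot on exactly the same strict inequality \eqref{cont}; your route avoids the subcritical detour and Moser iteration at the price of the Brezis--Lieb computation, while the paper's route trades that for a citation. For the Euler--Lagrange equation and the $C^{1,r-1}(\overline M)$ regularity the paper simply cites Cherrier~\cite{Ch}; your bootstrap sketch is in the same spirit, and your remark that the H\"older exponent is capped at $r-1$ by the term $u_\al^{r-1}$ is the right explanation.
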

\begin{proof}
The existence of minimizer follows from the standard subcritical exponent approximating method and Moser's iteration argument, see, e.g., Proposition 2.1 of \cite{E92a}.
A calculus of variations argument shows that $u_\al$ is a weak solution of Euler-Lagrange equation \eqref{el equ}. Finally, that $u_\al \in C^\infty(M)\cap C^{1,r-1}(\overline M)$ follows from the regularity theory in \cite{Ch}.
\end{proof}

\begin{lem}\label{lem:su ineq1}
 For every $\va>0$, there exists a constant $B(\va)>0$ depending on $\va$ such that
\[
 \left(\int_{\pa M}|u|^q\,\ud s_g\right)^{2/q}\leq (S+\va)\int_{M}|\nabla_g u|^2\,\ud v_g+B(\va)\|u\|^2_{L^r(\pa M,g)},
\]
for all $u\in H^1(M)$.
\end{lem}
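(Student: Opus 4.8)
The plan is to prove this as a consequence of the sharp Euclidean trace inequality \eqref{eq:eucl1} together with a partition-of-unity argument. First I would fix $\varepsilon>0$ and cover $\pa M$ by finitely many coordinate charts in which $(M,g)$ is represented as (a piece of) the upper half space $\R^n_+$ with a metric close to the Euclidean one; by choosing the charts small enough, the induced metric on each chart can be made $(1+\delta)$-bi-Lipschitz to the flat metric, where $\delta=\delta(\varepsilon)$ is as small as we please. On each such chart one has, for functions supported there, a trace inequality of the form $\|u\|_{L^q(\pa M)}^2\le (S+\varepsilon/2)\int_M|\nabla_g u|^2\,\ud v_g$, obtained from \eqref{eq:eucl1} by transplanting to $\R^n_+$ and absorbing the metric distortion into the constant (using that $q>2$ so the $L^q$-norm controls localization, and that the gradient and volume terms pick up only factors $1+O(\delta)$). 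One also needs a chart, or several, covering the interior away from $\pa M$; there the boundary integral is absent and the inequality is vacuous on that piece, or handled by interior Sobolev embedding.

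Next I would take a smooth partition of unity $\{\eta_i^2\}$ subordinate to this cover, with $\sum_i \eta_i^2=1$, and write $u=\sum_i \eta_i(\eta_i u)$. Applying the chart-wise sharp inequality to each $\eta_i u$ and summing, the leading terms combine to give $(S+\varepsilon/2)\int_M |\nabla_g(\eta_i u)|^2$ summed over $i$; expanding $|\nabla_g(\eta_i u)|^2 = \eta_i^2|\nabla_g u|^2 + 2\eta_i u\,\nabla_g\eta_i\cdot\nabla_g u + u^2|\nabla_g\eta_i|^2$ and summing over $i$, the first terms reassemble to $\int_M|\nabla_g u|^2$, the cross terms sum to $\tfrac12\int_M \nabla_g(u^2)\cdot\nabla_g(\sum\eta_i^2)=0$ since $\sum\eta_i^2\equiv 1$, and the last terms contribute $C\int_M u^2$. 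Thus one arrives at $\|u\|_{L^q(\pa M)}^2\le (S+\varepsilon/2)\int_M|\nabla_g u|^2\,\ud v_g + C(\varepsilon)\int_M u^2\,\ud v_g$ (after also combining the $\ell^q$-to-$\ell^2$ passage over the finitely many charts, which costs only a dimensional constant since $q\ge 2$).

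Finally I would upgrade the lower-order term from an interior $L^2(M)$ norm to the boundary $L^r(\pa M)$ norm. The point is that $\int_M u^2$ does not control $\|u\|_{L^2(\pa M)}^2$, let alone $\|u\|_{L^r(\pa M)}^2$ with $r<2$; instead one uses the compactness of the trace embedding $H^1(M)\hookrightarrow L^2(\pa M)$. By a standard interpolation/compactness argument (Ehrling-type lemma): for every $\varepsilon'>0$ there is $C(\varepsilon')$ with $\|u\|_{L^2(\pa M)}^2\le \varepsilon'\|\nabla_g u\|_{L^2(M)}^2 + C(\varepsilon')\|u\|_{L^r(\pa M)}^2$ for all $u\in H^1(M)$ — if this failed, a normalized sequence would converge weakly in $H^1$ and strongly in $L^2(\pa M)$ and in $L^r(\pa M)$ to a limit that is constant with zero $L^r(\pa M)$ trace, hence zero, contradicting the normalization. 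Since $\int_M u^2 \le C\big(\|\nabla_g u\|_{L^2(M)}^2 + \|u\|_{L^2(\pa M)}^2\big)$ by the same trace/Poincaré considerations, feeding the Ehrling bound in (with $\varepsilon'$ chosen so the accumulated gradient coefficient stays below $S+\varepsilon$) yields the claimed inequality with $B(\varepsilon)=C(\varepsilon)$.

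The main obstacle is the last step: one must be careful that replacing $\|u\|_{L^2(M)}$ (or $\|u\|_{L^2(\pa M)}$) by $\|u\|_{L^r(\pa M)}$ with the \emph{weaker} exponent $r=q'<2$ genuinely works, which it does only because the embedding $H^1(M)\hookrightarrow L^2(\pa M)$ is compact and $L^r(\pa M)\hookrightarrow$ is weaker still, so a contradiction/compactness argument applies — but one has to make sure no constant functions sneak in, i.e. that the $L^r(\pa M)$ norm is a genuine norm detecting the kernel of the gradient. Everything else (the chart-wise sharp constant, the partition of unity bookkeeping, the $\ell^q$ summation over finitely many charts) is routine.
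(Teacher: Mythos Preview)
Your approach is sound in outline but differs from the paper's and contains one slip in the final step. The paper does not redo any partition-of-unity argument: it simply quotes the sharp trace inequality of Li--Zhu \cite{LZ97} (this is \eqref{eq:al} in the introduction), which already gives
\[
\|u\|_{L^q(\pa M)}^2\le S\,\|\nabla_g u\|_{L^2(M)}^2+A(M,g)\,\|u\|_{L^2(\pa M)}^2
\]
with the \emph{exact} constant $S$, and then applies precisely the Ehrling interpolation you wrote down, $\|u\|_{L^2(\pa M)}^2\le \va\|\nabla_g u\|_{L^2(M)}^2+\tilde B(\va)\|u\|_{L^r(\pa M)}^2$ (this is \eqref{interp} in the paper), to trade $L^2(\pa M)$ for $L^r(\pa M)$. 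So the paper's proof is two lines; your route is more self-contained but longer, and the compactness step you isolate is identical to the paper's.

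The slip is in your last paragraph. After localisation you have a remainder $C_0(\va)\int_M u^2$, and you propose to control it via the Poincar\'e bound $\int_M u^2\le C\big(\|\nabla_g u\|_{L^2(M)}^2+\|u\|_{L^2(\pa M)}^2\big)$ followed by the Ehrling bound on $\|u\|_{L^2(\pa M)}^2$. But that Poincar\'e constant $C$ is \emph{fixed} (testing on constants $u\equiv c$ shows $C\ge \mathrm{vol}_g(M)/\mathrm{vol}_g(\pa M)$), so after substitution the gradient coefficient becomes $(S+\va/2)+C_0(\va)\,C\,(1+\va')$, and no choice of $\va'$ brings this below $S+\va$. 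The repair is immediate: run the same compactness argument directly to get $\int_M u^2\le \va''\|\nabla_g u\|_{L^2(M)}^2+C(\va'')\|u\|_{L^r(\pa M)}^2$ (a violating normalised sequence is bounded in $H^1(M)$, converges to a constant with vanishing $L^r(\pa M)$-trace, hence to zero, contradiction). With that adjustment your argument goes through.
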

\begin{proof}
 By the compactness, we have that for every $\va>0$ there exists a positive constant $\tilde B(\va)$ such that
\be\label{interp}
 \int_{\pa M}u^2\,\ud s_g\leq \va \int_{M}|\nabla_g u|^2\,\ud v_g +\tilde B(\va) \|u\|^2_{L^r(\pa M,g)}.
\ee
Hence, the proposition follows from the sharp trace inequality in Theorem 0.1 in \cite{LZ97}.
\end{proof}

Note that
\[
 \begin{split}
  I_\al(u_\al)&=\int_{M}|\nabla_g u_\al|^2\,\ud v_g+ \frac{n-2}{2}\int_{\pa M}h_g u^2_\al+ \al\|u_\al\|^2_{L^r(\pa M,g)}\\&
\geq (1-\va \max_{\pa M}|h_g|) \int_{M}|\nabla_g u_\al|^2\,\ud v_g+(\al -B(\va)) \|u_\al\|^2_{L^r(\pa M,g)}.
 \end{split}
\]
This implies
\[
 \int_{M}|\nabla_g u_\al|^2\,\ud v_g \leq \frac{2}{S}
\]
and
\[
 \|u_\al\|_{L^r(\pa M,g)}\to 0\quad \mbox{as }\al \to \infty.
\]
It follows that $u_\al \rightharpoonup \bar u$ in $H^1(M)$ for some $\bar u\in H^1_0(M)$. In fact, $\bar u=0, a.e.$ on $M$.
This is because $u_\al$ is harmonic on $M$ and thus it satisfies (see Proposition 2.1 in \cite{HWY})
\[
 \|u_\al\|_{L^\frac{nr}{n-r}(M,g)}\leq C(M,g, n)\|u_\al\|_{L^r(\pa M,g)},%,\quad \forall \ 1<s<\infty,
\]
and $u_\al\to \bar u$ in $L^{^\frac{nr}{n-r}(M)}$.

We claim that, as $\al \to \infty$,
\be\label{l converges}
\ell_\al \to \frac{1}{S}
\ee
and
\be\label{lower term converges}
 \al\|u_\al\|^2_{L^r(\pa M,g)}\to 0.
\ee
Indeed, by Lemma \ref{lem:su ineq1} and \eqref{interp} for every $\va>0$
\[
 \begin{split}
  1&\leq (S+\frac{\va}{2})  \int_{M}|\nabla_g u_\al|^2\,\ud v_g+B(\va)\|u_\al\|^2_{L^r(\pa M,g)}\\ &
\leq (S+\va)\ell_\al +(2 B(\va)-\al S) \|u_\al\|^2_{L^r(\pa M,g)}.
 \end{split}
\]
Thus
\[
 \frac{1}{S+\va}\leq \ell_\al <\frac{1}{S},\quad \frac{1}{2}\al S \|u_\al\|^2_{L^r(\pa M,g)}\leq (S+\va)\frac{1}{S}-1=\frac{\va}{S},
\]
if $\al S>4B(\va)$. Hence,  the claim follows.

By the maximum principle, there exists a point $x_\al\in \pa M$ such that
\[
 u_\al(x_\al)=\max_{\overline M} u_\al>0.
\]
In view of \eqref{lower term converges} and
\[
 1=\int_{\pa M}u^q_\al\leq  u_\al(x_\al)^{q-r} \int_{\pa M}u^r_\al,
\]
we have $ u_\al(x_\al)\to \infty$. Set $\mu_\al:= u_\al(x_\al)^{-2/(n-2)}$. Then $\mu_\al \to 0$ as $\al \to \infty$.

Let $x=(x_1,\cdots, x_{n-1}, x_n)=(x',x_{n})$ be \emph{Fermi coordinate} (see, e.g., \cite{E92a}) at $x_\al$, where $(x_1, \cdots, x_{n-1})$
are normal coordinates on $\pa M$ at $x_\al$ and $\gamma(x_{n})$ is the geodesic leaving from $(x_1,\cdot, x_{n-1})$ in the orthogonal direction to $\pa M$ and parameterized by arc length.
In this coordinate system,
\[
\sum_{1\leq i,j\leq n}g_{ij}(x)\ud x_i\ud x_j =\ud x_{n}^2+\sum_{1\leq i,j\leq n-1}g_{ij}(x)\ud x_i\ud x_j.
\]
Moreover, $g^{ij}$ has the following Taylor expansion near $\pa M$:
\begin{lem}[Lemma 3.2 in \cite{E92a}]\label{taylor expansion of fermi}
For $\{x_k\}_{k=1,\cdots, n}$ are small,
\begin{subequations}
\begin{align}\label{eq:taylor expansion of fermi}
g^{ij}(x)&=\delta^{ij}+2h^{ij}(x',0)x_{n}+O(|x|^2),\\
 g^{ij}\Gamma_{ij}^k&=O(|x|).
\end{align}
\end{subequations}
where $i,j=1,\cdots,n-1$ and $h_{ij}$ is the second fundamental form of $\pa M$.
\end{lem}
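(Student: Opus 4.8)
The plan is to treat Lemma~\ref{taylor expansion of fermi} as a purely local computation about the normal exponential map of $\pa M$ at the point $x_\al$, obtaining at the very end the uniformity of the $O(\cdot)$ in $x_\al$ from compactness of $\pa M$ and smoothness of $g$. First I would recall why Fermi coordinates give the product form displayed before the lemma: since $x_n$ is arc length along the geodesics leaving $\pa M$ orthogonally, the field $\pa/\pa x_n$ is a unit geodesic field, so $g_{nn}\equiv 1$; and by the Gauss lemma for the normal exponential map $\pa/\pa x_n$ stays orthogonal to the slices $\{x_n=\mathrm{const}\}$, so $g_{in}\equiv 0$ for $i\le n-1$. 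This reduces everything to a Taylor expansion of the tangential block $g_{ij}(x',x_n)$, $i,j\le n-1$, near the origin.

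For \eqref{eq:taylor expansion of fermi} I would expand in $x_n$, writing $g_{ij}(x',x_n)=g_{ij}(x',0)+x_n\,\pa_{x_n}g_{ij}(x',0)+O(x_n^2)$. The term $g_{ij}(x',0)$ is exactly the induced metric on $\pa M$ written in the geodesic normal coordinates $(x_1,\dots,x_{n-1})$ centered at $x_\al$, so $g_{ij}(0,0)=\delta_{ij}$ and all first tangential derivatives of $g_{ij}$ vanish at the origin, giving $g_{ij}(x',0)=\delta_{ij}+O(|x'|^2)$. The coefficient $\pa_{x_n}g_{ij}(x',0)$ equals $-2h_{ij}(x',0)$ up to the sign and normalization conventions of \cite{E92a}: this is the classical identity expressing the normal variation of the first fundamental form of the slices $\{x_n=t\}$ in terms of twice the second fundamental form of $\pa M$. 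Hence $g_{ij}(x)=\delta_{ij}-2h_{ij}(x',0)x_n+O(|x|^2)$, and inverting this matrix by the Neumann expansion $(\mathrm{Id}+A)^{-1}=\mathrm{Id}-A+O(|A|^2)$ with $A=-2h_{ij}(x',0)x_n+O(|x|^2)$ yields $g^{ij}(x)=\delta^{ij}+2h^{ij}(x',0)x_n+O(|x|^2)$, the indices on $h$ being raised by $\delta$ at leading order.

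For the second identity I would write the Christoffel symbols of $g$ as $\Gamma^k_{ij}=\tfrac12 g^{kl}\big(\pa_ig_{jl}+\pa_jg_{il}-\pa_lg_{ij}\big)$ and note that $g^{ij}\Gamma^k_{ij}(x)=g^{ij}\Gamma^k_{ij}(0)+O(|x|)$ by smoothness, so it suffices to evaluate at $x_\al=0$ and to check it vanishes there (for $i,j,k\le n-1$). At the origin the only nonvanishing first derivatives of metric coefficients are the normal ones $\pa_{x_n}g_{ij}$, since tangential derivatives of $g_{ij}$ vanish by the normal-coordinate property while $g_{in}\equiv 0$ and $g_{nn}\equiv 1$ are constant; and $\pa_{x_n}g_{ij}$ cannot enter $\Gamma^k_{ij}$ with $k\le n-1$ because $g^{kn}\equiv 0$. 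Thus $\Gamma^k_{ij}(0)=0$, proving $g^{ij}\Gamma^k_{ij}=O(|x|)$. The step requiring the most care is not any single computation but the bookkeeping of conventions — the choice of unit normal, the sign of $h_{ij}$, and the normalization of the mean curvature $h$ — so that the factor $\tfrac{n-2}{2}$ in \eqref{main tr} comes out with the correct sign; I would fix this by checking the flat half-space ($h\equiv 0$) and the round ball, where the boundary slices shrink toward the interior and so force $h>0$. Finally I would record explicitly that all the $O(\cdot)$ are uniform as $x_\al$ ranges over $\pa M$.
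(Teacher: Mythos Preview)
The paper does not prove this lemma at all: it is quoted verbatim as Lemma~3.2 of Escobar \cite{E92a} and used as a black box, so there is no ``paper's own proof'' to compare against. Your outline is precisely the standard derivation of these Fermi-coordinate expansions, and the argument for \eqref{eq:taylor expansion of fermi} is correct as written.

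One point deserves comment. In your treatment of $g^{ij}\Gamma^k_{ij}=O(|x|)$ you explicitly restrict to $k\le n-1$, and your argument (that only $\pa_{x_n}g_{ij}$ can be nonzero at the origin, but it cannot enter $\Gamma^k_{ij}$ when $g^{kn}\equiv 0$) is valid exactly in that range. For $k=n$ the same computation gives $\Gamma^n_{ij}=-\tfrac12\pa_{x_n}g_{ij}$, hence $g^{ij}\Gamma^n_{ij}(0)$ equals a nonzero multiple of the mean curvature $h_g(0)$ and does \emph{not} vanish in general. So as stated for an arbitrary metric $g$, the second identity cannot hold for $k=n$ unless the mean curvature vanishes at the base point. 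This is not a defect in your reasoning but a point about the scope of the lemma: in the paper the identity is only invoked (see the estimate \eqref{lap bound}) in the Fermi coordinates of the conformally adjusted metric $\hat g$, which has been arranged to satisfy $h_{\hat g}=0$ on $\pa'B^+_{2\delta_0}$, so the $k=n$ contribution does vanish there. You should either state explicitly that your proof covers only $k\le n-1$, or add the one-line observation that $g^{ij}\Gamma^n_{ij}(0)=\pm(n-1)h_g(0)$ and hence the $k=n$ case requires the vanishing-mean-curvature hypothesis that is in force when the lemma is actually applied.
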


 To proceed, we introduce some notations. 
 \begin{itemize}
 \item For a domain $D\subset \R^{n}$ with boundary $\pa D$, we denote $\pa' D$ as the interior of $\overline D\cap \pa \R^{n}_+$ in $\R^{n-1}=\partial\R^{n}_+$ and $\pa''D=\pa D \setminus \pa' D$. 
 
\item For $\bar x\in \R^{n}$, $B_{r}(\bar x):=\{x\in \R^{n}: |x-\bar x|<r\}$ and $B^+_{r}(\bar x):=B_{r}(\bar x)\cap \R^{n}_+$, where $|\cdot|$ is the Euclidean distance. %If $\bar x\in \pa\R^{n}_+$,  $B_{r}(\bar x):=\{x=(x', 0): |x'-\bar x'|<r\}$. Hence $\pa' \B^+_{r}(\bar x)=B_{r}(\bar x)$ if $\bar x\in \pa\R^{n+1}_+$. 
We will not keep writing the center $\bar x$ if $\bar x=0$. 
\end{itemize}
For suitably small $\delta_0>0$ (independent of $\al$), we define $v_\al$ in a neighborhood of
$x_\al=0$ by
\[
 v_\al(y)=\mu_\al^{(n-2)/2}u_\al(\mu_\al y),\quad x\in B_{\delta_0/\mu_\al}^+,
\]
in the Fermi coordinate mentioned above. Then $v_\al$ satisfies
\be\label{eq 0}
\begin{cases}
\Delta_{g_\al}v_\al=0,&\quad \mbox{in }B_{\delta_0/\mu_\al}^+\\
\frac{\pa_{g_\al} v_{\al}}{\pa \nu}+\frac{n-2}{2}h_\al v_\al+\ep_\al v_\al^{r-1}=\ell_\al v_\al^{q-1},& \quad \mbox{on }\pa' B_{\delta_0/\mu_\al}^+\\
v_\al(0)=1,\quad  0\leq v_\al \leq 1,
\end{cases}
\ee
where $g_\al(x)=g_{ij}(\mu_\al x)\ud x_i\ud x_j$, $h_\al$ is the mean curvature of $\pa' B_{\delta_0/\mu_\al}$ with respect to the metric $g_\al$ and
\be\label{eq:epsilon alpha}
 \ep_\al:=\al \mu_\al^{n-1-\frac{n-2}{2}r}\|u_\al\|^{2-r}_{L^r(\pa M)}.
\ee
Note that
\be\label{etimate for ep}
 \ep_\al=\frac{\al \|u_\al\|^{2}_{L^r(\pa M)}}{(\max_{\overline{M}}u_\al)^{q-r}\int_{\pa M}u_\al^r\,\ud s_g}\leq \al \|u_\al\|^{2}_{L^r(\pa M)}\to 0\quad\mbox{as }\al \to \infty,
\ee
because
$1=\int_{\pa M}u_\al^q\,\ud s_g\leq  (\max_{\overline{M}}u_\al)^{q-r}\int_{\pa M}u_\al^r\,\ud s_g$. By the standard elliptic equations theory,
for all $R>1$,
\be\label{holder estimate 1}
\|v_\al\|_{C^{1,r-1}(\overline B_R^+)}\leq C(R)\quad \mbox{for all sufficiently large }\al.
\ee
Therefore, $v_\al \to U$ in $C^1_{loc}(\overline{\R^{n}_+})$ for some $U\in C^{1,r-1}_{loc}(\overline{\R^{n}_+})\cap C^\infty(\R^{n}_+)$ which satisfies
\be\label{limt equ}
\begin{cases}
-\Delta U=0,\quad &\mbox{in }\mathbb{R}^{n}_+,\\
-\pa_{y_{n}}U=\frac{1}{S}U^{q-1},\quad & \mbox{on }\pa\mathbb{R}^{n}_+,\\
U(0)=1,\quad 0\leq U\leq 1.
\end{cases}
\ee
By the Liouville type theorem in \cite{LZ95},
\be\label{bub}
U(y',y_n)=\left(\frac{(n-2)^2S^2}{|y'|^2+(y_n+(n-2)S)^2}\right)^\frac{n-2}{2},
\ee
where $y'=(y_1,\cdots, y_{n-1})$. Denote $\lda_0=(n-2)S$ and
\[
 U_{x_0,\lda}(x',x_n):=\lda^{-\frac{n-2}{2}}U((x'-x_0')/\lda, x_n/\lda)
=\lda_0^{\frac{n-2}{2}} \left(\frac{\lda \lda_0}{|x'-x_0'|^2+(x_n+\lda\lda_0)^2}\right)^\frac{n-2}{2},
\]
where  $x_0\in \pa \R^{n}$, $\lda>0$. For brevity, we write $U_{\lda}$ as $U_{0,\lda}$. Hence, $U_{1}=U$.

\begin{prop}\label{prop:energy converges} For every $\delta>0$,
\[
 \lim_{\al \to \infty}\left(\int_{B_{\delta}^+}|\nabla_g(u_\al-U_{\mu_\al})|^2\,\ud v_g+\int_{\pa' B_\delta^+}|u_\al-U_{\mu_\al}|^q\,\ud s_g\right)=0.
\]
\end{prop}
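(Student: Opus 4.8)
The plan is to establish the claimed convergence by combining a concentration-compactness argument with the Euler-Lagrange equation and the strong $C^1_{loc}$ convergence already obtained for the rescaled functions. First I would record the basic energy quantities. Since $\int_{\pa M}u_\al^q\,\ud s_g=1$ and $\ell_\al\to 1/S$, the functions $u_\al$ form a minimizing sequence for the Euclidean trace constant that concentrates at the single point $x_\al\to$ (up to subsequence) some $x_\infty\in\pa M$, with $\bar u=0$. The key preliminary step is to show there is \emph{no loss of mass away from the concentration point and no secondary bubbling}: I would argue, using the strict subadditivity of the trace Sobolev constant (concentration compactness on the boundary) together with $\int_M|\nabla_g u_\al|^2\to 2/S$ and $\int_{\pa M}u_\al^q=1$, that all of the energy concentrates in a single bubble. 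Concretely, for every $\eta>0$ one can find $R$ and $\al$ large so that $\int_{\pa' B_{R\mu_\al}^+}u_\al^q\,\ud s_g>1-\eta$ and $\int_{B_{R\mu_\al}^+}|\nabla_g u_\al|^2\,\ud v_g>2/S-\eta$, while simultaneously $\int_{B_\delta^+\setminus B_{R\mu_\al}^+}|\nabla_g u_\al|^2\,\ud v_g\to 0$ as first $\al\to\infty$ then $R\to\infty$. This is the standard "one bubble" statement, and it follows because any second concentration point, or any nonconcentrated piece of $L^q$ mass on $\pa M$, would force $\int_M|\nabla_g u_\al|^2\ge (1+c)\cdot \tfrac{2}{S}$ via the sharp inequality \eqref{eq:al} of Li-Zhu, contradicting $\int_M|\nabla_g u_\al|^2\to 2/S$. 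I would also need the profile decomposition to identify the concentration scale with $\mu_\al$ and the profile with $U$, which is exactly what \eqref{holder estimate 1}--\eqref{bub} give after rescaling: $v_\al=\mu_\al^{(n-2)/2}u_\al(\mu_\al\cdot)\to U=U_1$ in $C^1_{loc}(\overline{\R^n_+})$.

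Next I would translate this into the statement for the difference. Write the integral over $B_\delta^+$ as the sum of the integral over $B_{R\mu_\al}^+$ and over the annulus $B_\delta^+\setminus B_{R\mu_\al}^+$. On the inner ball, rescaling by $\mu_\al$ turns $\int_{B_{R\mu_\al}^+}|\nabla_g(u_\al-U_{\mu_\al})|^2\,\ud v_g$ into $\int_{B_R^+}|\nabla_{g_\al}(v_\al-U_1)|^2\,\ud v_{g_\al}$ (using scale invariance of the Dirichlet energy and that $U_{\mu_\al}$ rescales to $U_1$), and similarly for the boundary $L^q$ term; since $v_\al\to U_1$ in $C^1(\overline{B_R^+})$ and $g_\al\to \delta$ (the Euclidean metric) in $C^1_{loc}$ by Lemma \ref{taylor expansion of fermi}, these rescaled integrals tend to $0$ for each fixed $R$. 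On the annulus, I would bound $\int_{B_\delta^+\setminus B_{R\mu_\al}^+}|\nabla_g(u_\al-U_{\mu_\al})|^2$ by $2\int_{B_\delta^+\setminus B_{R\mu_\al}^+}|\nabla_g u_\al|^2+2\int_{\R^n_+\setminus B_{R\mu_\al}^+}|\nabla U_{\mu_\al}|^2$. The first term is small by the no-loss-of-mass step above (choose $R$ large, then $\al$ large); the second term is the tail of the bubble's energy, which by scale invariance equals $\int_{\R^n_+\setminus B_R^+}|\nabla U_1|^2$, a quantity that goes to $0$ as $R\to\infty$ because $U_1\in\dot H^1(\R^n_+)$. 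The boundary term on the annulus is handled identically using $\int_{\pa' B_\delta^+\setminus \pa' B_{R\mu_\al}^+}u_\al^q\to 0$ and the corresponding tail of $\int_{\pa'\R^n_+}U_1^q=1$. Combining, for any $\eta>0$ we first fix $R$ large to kill both bubble tails, then send $\al\to\infty$ to kill the rescaled inner integrals and the truncated-mass remainders, giving $\limsup_{\al\to\infty}(\cdots)\le C\eta$; since $\eta$ is arbitrary the limit is $0$.

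The main obstacle, and the step deserving the most care, is the no-loss-of-mass / single-bubble claim: making rigorous that the minimizing sequence concentrates entirely in one bubble at scale $\mu_\al$ with profile $U$, with no residual $L^q$ mass on $\pa M$ and no energy leaking into a neck region. This is where one genuinely uses the sharpness of the Li-Zhu inequality \eqref{eq:al} (to rule out splitting of mass into two or more bubbles, each of which would carry at least the Euclidean energy $2/S$), the fact that $\bar u=0$ (ruling out a nonzero weak limit carrying $L^q$ mass), and a Pohozaev- or direct-truncation-type argument to control the annular neck energy. One standard way to close the neck estimate is to test \eqref{el equ} against $\zeta^2 u_\al$ for a suitable cutoff $\zeta$ supported in the annulus $B_\delta^+\setminus B_{R\mu_\al}^+$ and use that $\ell_\al\int_{\pa'} \zeta^2 u_\al^q$ is controlled by the small $L^q$ mass there (via Hölder and the subcritical smallness $\int_{\pa' (B_\delta^+\setminus B_{R\mu_\al}^+)}u_\al^q\le\eta$ raised to the appropriate power times the Sobolev constant), absorbing the critical term; the curvature term $\int h_g\zeta^2 u_\al^2$ and the $\ep_\al$-term are lower order and vanish in the limit. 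Once the neck energy is shown to be $o(1)$ as $\al\to\infty$ and then $R\to\infty$, the rest is the routine rescaling bookkeeping described above.
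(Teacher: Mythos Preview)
Your overall decomposition (inner ball $B_{R\mu_\al}^+$ plus annulus, rescale on the inner ball and use $v_\al\to U$ in $C^1_{loc}$) is exactly the paper's, and the argument is correct. Two comments.

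First, a slip: the total Dirichlet energy satisfies $\int_M|\nabla_g u_\al|^2\to 1/S$, not $2/S$. This follows directly from $\ell_\al\to 1/S$, $\al\|u_\al\|_{L^r}^2\to 0$, and $\int_{\pa M}h_g u_\al^2\to 0$ (the latter since $\|u_\al\|_{L^2(\pa M)}\le \|u_\al\|_{L^r}^\theta\|u_\al\|_{L^q}^{1-\theta}\to 0$).

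Second, the step you flag as the ``main obstacle'' --- the single-bubble / no-loss-of-mass claim and the neck estimate --- is where you work much harder than the paper. You propose concentration-compactness, ruling out secondary bubbles via strict subadditivity, and testing \eqref{el equ} against $\zeta^2 u_\al$ to absorb the critical term in the neck. None of this is needed. Once you know $\int_M|\nabla_g u_\al|^2\to 1/S$ \emph{exactly}, the annulus energy is controlled by pure arithmetic: after rescaling,
\[
\int_{B_{\delta/\mu_\al}^+\setminus B_R^+}|\nabla_{g_\al}v_\al|^2
\;\le\; \int_{B_{\delta/\mu_\al}^+}|\nabla_{g_\al}v_\al|^2 - \int_{B_R^+}|\nabla_{g_\al}v_\al|^2
\;\le\; \Big(\tfrac{1}{S}+\va\Big) - \Big(\tfrac{1}{S}-\va\Big) = 2\va,
\]
since the inner integral converges to $\int_{B_R^+}|\nabla U|^2\ge 1/S-\va$ by the $C^1$ convergence. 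The paper's proof is just this subtraction together with the bubble tail bound $\int_{\R^n_+\setminus B_R^+}|\nabla U|^2\le\va$. Your machinery would be needed if you only had an \emph{upper} bound on the total energy (as in a generic Palais--Smale sequence), but here the minimizing property pins the energy at the sharp value, which short-circuits the whole profile-decomposition step.
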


\begin{proof} We only prove that $$\lim_{\al \to \infty}\int_{B_{\delta}^+}|\nabla_g(u_\al-U_{\mu_\al})|^2\,\ud v_g=0,$$ and the other can be proved similarly.

For every given $\va>0$, one can find $\al_0>0$  such that for all $\al \geq \al_0$, 
$$\int_{M}|\nabla_gu_\al|^2\,\ud v_g\leq \frac{1}{S}+\va,$$
because $\lim_{\al \to \infty}\int_{M}|\nabla_gu_\al|^2\,\ud v_g=\frac{1}{S}$. Since $ \int_{\R^{n}_+}|\nabla U|^2 =1/S$, we can choose $R>0$ such that
\[
 \int_{\R^n_+\setminus B_{R}^+}|\nabla U|^2 \leq \va.
\]
Note that
\[
\int_{B_{\delta/\mu_\al}^+}|\nabla_{g_\al}v_\al|^2\,\ud v_{g_\al}= \int_{B_{\delta}^+}|\nabla_gu_\al|^2\,\ud v_g\leq \frac{1}{S}+\va.
\]
Hence
\[
\begin{split}
 \int_{B_{\delta}^+}|\nabla_g(u_\al-U_{\mu_\al})|^2\,\ud v_g&= \int_{B_{\delta/\mu_\al}^+}|\nabla_{g_\al}(v_\al-U)|^2\,\ud v_{g_\al}\\&
=\int_{B_R^+}|\nabla_{g_\al}(v_\al-U)|^2\,\ud v_{g_\al}+\int_{B_{\delta/\mu_\al}^+\setminus \overline B_R^+}|\nabla_{g_\al}(v_\al-U)|^2\,\ud v_{g_\al}\\&
\leq 2\va+ 2\int_{B_{\delta/\mu_\al}^+\setminus \overline B_R^+}|\nabla_{g_\al}v_\al|^2\,\ud v_{g_\al}+2\int_{B_{\delta/\mu_\al}^+\setminus \overline B_R^+}|\nabla_{g_\al}U|^2\,\ud v_{g_\al}\\&
\leq 10 \va,
\end{split}
\]
where we used $\|v_\al-U\|_{C^1(\overline B_R^+)}\leq \va$ for large $\al$ and
\[
 \begin{split}
  &\int_{B_{\delta/\mu_\al}^+\setminus \overline B_R^+}|\nabla_{g_\al}v_\al|^2\,\ud v_{g_\al}+ \int_{B_{\delta/\mu_\al}^+\setminus \overline B_R^+}|\nabla_{g_\al}U|^2\,\ud v_{g_\al}\\
  &\quad\leq \frac{1}{S}+\va-\int_{B_R^+}|\nabla_{g_\al}v_\al|^2\,\ud v_{g_\al}+\frac{1}{S}+\va-\int_{B_R^+}|\nabla_{g_\al}U_\al|^2\,\ud v_{g_\al}\\
&\quad\leq 4\va.
 \end{split}
\]

\end{proof}

\begin{prop}\label{prop:uniform estimates}
For all large $\al$,
\[
 u_\al(x)\leq C\mu_\al^{\frac{n-2}{2}}\mathrm{dist}_g(x,x_\al)^{2-n}\quad \mbox{for all }x\in \overline{M},
\]
where $C>0$ depends only on $M, g$.
\end{prop}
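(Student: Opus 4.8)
The plan is to argue by contradiction, following the standard blow-up scheme for pointwise bounds on concentrating solutions of critical boundary problems. First I would reduce to the boundary: it suffices to prove $u_\al(x)\le C\mu_\al^{(n-2)/2}\mathrm{dist}_g(x,x_\al)^{2-n}$ for $x\in\pa M$, because $u_\al$ is harmonic and the function $C\mu_\al^{(n-2)/2}(G_g(\cdot,x_\al)+A)$ — with $G_g(\cdot,x_\al)$ the Neumann Green's function of $\Delta_g$ and $A$ a large constant making it positive — is superharmonic on $M\setminus\{x_\al\}$, dominates $u_\al$ on $\pa M$ (using the boundary estimate together with $u_\al\le\mu_\al^{-(n-2)/2}$), blows up at $x_\al$, and is itself $\le C'\mu_\al^{(n-2)/2}\mathrm{dist}_g(\cdot,x_\al)^{2-n}$ on $M$, so the maximum principle gives the interior bound. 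Now set $T_\al:=\max_{\overline M}\mu_\al^{-(n-2)/2}\mathrm{dist}_g(\cdot,x_\al)^{n-2}u_\al$, assume $T_\al\to\infty$ along a subsequence, and let the maximum be attained at $y_\al\in\pa M$, $\sigma_\al:=\mathrm{dist}_g(y_\al,x_\al)$. Points at distance bounded away from $0$ cannot realize the maximum when $T_\al\to\infty$: there the $L^q$-energy concentration, Moser iteration for the boundary equation (the critical potential $u_\al^{q-2}$ has small $L^{n-1}(\pa M)$ norm away from $x_\al$), and the Green's representation of Step~1 bound $u_\al$ by $C\mu_\al^{(n-2)/2}$. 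So $\sigma_\al\to0$; and since $v_\al\to U$ in $C^1_{loc}$ with $U(y)\le C(1+|y|)^{2-n}$, the weighted quantity is $O(1)$ on fixed balls in the rescaled picture, so also $\sigma_\al/\mu_\al\to\infty$: the maximum point lies in the neck.

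Next I would rescale at $y_\al$. In Fermi coordinates at $x_\al$, maximality of $y_\al$ gives the barrier $v_\al(y)\le T_\al|y|^{2-n}$ for $|y|\le\delta_0/\mu_\al$, so the rescaling $w_\al(z):=(\sigma_\al/\mu_\al)^{n-2}T_\al^{-1}\,v_\al\big((\sigma_\al/\mu_\al)z\big)$ satisfies $0\le w_\al(z)\le|z|^{2-n}$ with equality $w_\al(p_\al)=|p_\al|^{2-n}$ at some $p_\al\in\pa\R^n_+$, $|p_\al|=1$; it is harmonic for the rescaled metric $g_{ij}(\sigma_\al z)$ and, modulo lower-order terms carrying $h$ and $\ep_\al$, obeys a Neumann condition on $\pa'$ with nonlinear coefficient of size $\sim T_\al^{2/(n-2)}(\mu_\al/\sigma_\al)$. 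If $v_\al(y_\al/\mu_\al)\ge(\sigma_\al/\mu_\al)^{-(n-2)/2}$ (equivalently this coefficient does not stay bounded), then rescaling $v_\al$ about $y_\al$ at the smaller scale $v_\al(y_\al/\mu_\al)^{-2/(n-2)}$ and invoking the Liouville theorem of \cite{LZ95} yields a second standard bubble supported in a ball disjoint from $B_{\sigma_\al/2}(x_\al)$; hence $\int_M|\nabla_g u_\al|^2\ge 2/S-o(1)$, contradicting $\int_M|\nabla_g u_\al|^2\to1/S$.

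In the remaining case the Neumann coefficient is bounded, $w_\al$ is locally bounded, and by elliptic estimates $w_\al\to w$ in $C^1_{loc}(\overline{\R^n_+}\setminus\{0\})$, where $w$ is harmonic in $\R^n_+\setminus\{0\}$, satisfies $-\pa_\nu w=c\,w^{q-1}$ on $\pa\R^n_+\setminus\{0\}$ for some $c\ge0$, and $0\le w\le|z|^{2-n}$ with equality at $p_\infty$, $|p_\infty|=1$. Since $|z|^{2-n}$ is harmonic and satisfies $-\pa_\nu|z|^{2-n}=0$ on $\pa\R^n_+\setminus\{0\}$, it is a supersolution of the limit problem touching $w$ from above at the boundary point $p_\infty\ne0$; the strong maximum principle and the Hopf lemma then force $w\equiv|z|^{2-n}$ and $c=0$. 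Unwinding this, $u_\al$ decays like $T_\al\mu_\al^{(n-2)/2}\sigma_\al^{2-n}$ on a fixed proportion of $\{\mathrm{dist}_g(\cdot,x_\al)=\sigma_\al\}$, i.e. larger by the factor $T_\al\to\infty$ than the tail $\sim\mu_\al^{(n-2)/2}\sigma_\al^{2-n}$ of the model bubble $U_{\mu_\al}$. To preclude this I would apply a Pohozaev (Rellich) identity on the half-annuli $B_r^+\setminus B_{R\mu_\al}^+$, with the metric error terms controlled by Lemma \ref{taylor expansion of fermi}; combined with the smallness of $\int_{\pa'B_{\delta_0}^+\setminus\pa'B_{R\mu_\al}^+}u_\al^q$ (which follows from Proposition \ref{prop:energy converges} and $\int_{\pa\R^n_+}U^q=1$), this gives a near-conservation law for the sliced quantity $\int_{\pa''B_r^+}\big(\tfrac r2|\pa_r u_\al|^2-\tfrac r2|\nabla_\theta u_\al|^2-\tfrac{n-2}{2}u_\al\,\pa_r u_\al\big)$; evaluating it at $r=\sigma_\al$ and at $r\sim R\mu_\al$, where $v_\al$ is close to $U$ and the slice is explicitly computable, forces $T_\al$ to remain bounded, a contradiction.

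The step I expect to be the main obstacle is this last one. The plain energy identity is automatically satisfied by the bubble profile and so carries no information about a slowly decaying tail in the neck, and the $L^q$ closeness in Proposition \ref{prop:energy converges} is too weak to detect an intermediate bump — its contribution to $\|u_\al-U_{\mu_\al}\|_{L^q(\pa'B_{\delta_0}^+)}^q$ is of order $T_\al^q(\mu_\al/\sigma_\al)^{n-1}$, which can tend to $0$ while $T_\al\to\infty$. One really needs the sharper, second-order balance provided by the Pohozaev identity, and the delicate point is controlling its curvature/metric error terms against the right powers of $r$ so that the neck contribution can be absorbed.
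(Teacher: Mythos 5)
Your strategy (maximize the weighted quantity $T_\al=\max\,\mu_\al^{-(n-2)/2}\mathrm{dist}_g(\cdot,x_\al)^{n-2}u_\al$, rescale at the maximum point in the neck, and kill the anomalous neck profile with a Pohozaev-type identity) is a recognizable route from the isolated-simple-blow-up literature, but as written it has two genuine gaps. The decisive one is the final step, which you yourself flag as the main obstacle and do not carry out: after you conclude $w\equiv|z|^{2-n}$, the only source of a contradiction is a quantitative balance between the neck amplitude $T_\al\mu_\al^{(n-2)/2}\sigma_\al^{2-n}$ and the bubble tail, and, as you correctly observe, the $L^q$-closeness of Proposition \ref{prop:energy converges} cannot see this. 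The proposed Pohozaev/Rellich identity on $B_r^+\setminus B_{R\mu_\al}^+$ is exactly where all the work lies (boundary terms on $\pa'$, the $\al\|u_\al\|^{2-r}_{L^r}u_\al^{r-1}$ term, the mean-curvature term, and the metric errors from Lemma \ref{taylor expansion of fermi} must all be weighted correctly in $r$), and none of it is done; so the proposal does not yet constitute a proof. There is also a circularity earlier: to rule out maximum points at distance bounded away from $x_\al$ you invoke a bound $\sup_{M\setminus B_\delta}u_\al\le C\mu_\al^{(n-2)/2}$, but the information available at that stage ($\|u_\al\|_{L^r(\pa M)}\to0$ plus Moser iteration away from $x_\al$) only gives $o(1)$, and a Green's representation gives $O(\mu_\al^{(n-2)/2})$ only if one already controls the tail of $u_\al^{q-1}$ --- i.e.\ essentially the far-field part of Proposition \ref{prop:uniform estimates} itself. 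That step needs either an iteration scheme (a weak decay estimate bootstrapped through the representation formula) or a different argument.

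For comparison, the paper avoids all of this with a short direct argument: it builds the Neumann function $G_\al$ of Lemma \ref{lem:neumann f}, whose boundary coefficient $b_\al=\min\{\tfrac{n-2}{2}h_g+\al(\|u_\al\|_{L^r}/u_\al)^{2-r},1\}$ is uniformly coercive because $vol_g\{b_\al<\tfrac12\}\to0$, so $G_\al$ has uniform two-sided bounds $\mathrm{dist}_g(\cdot,x_\al)^{2-n}$. Setting $\varphi_\al=\mu_\al^{(n-2)/2}G_\al$ and passing to the conformal metric $\hat g=\varphi_\al^{4/(n-2)}g$, the ratio $w_\al=u_\al/\varphi_\al$ is $\hat g$-harmonic and, precisely because $b_\al$ absorbs both the mean-curvature and the $\al\|u_\al\|^{2-r}_{L^r}u_\al^{r-2}$ terms, satisfies $\pa_{\hat g}w_\al/\pa\nu\le\ell_\al w_\al^{q-1}$ on $\pa M\setminus\{x_\al\}$; the Li--Zhu Moser iteration then bounds $w_\al$ uniformly outside $B_{\mu_\al}(x_\al)$, while $v_\al\to U$ handles the core, and the proposition follows at once. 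If you want to salvage your approach, you would need to (i) replace the far-field step by such a representation/iteration argument and (ii) actually establish the neck Pohozaev identity with all error terms; at that point the argument would be substantially longer than the paper's.
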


Let
\[
 b_\al:=\begin{cases}
            \min\{\frac{n-2}{2}h_g+\al\left(\frac{\|u_\al\|_{L^r}}{u_\al}\right)^{2-r},1\},&\quad \mbox{if }u_\al \neq 0,\\
             1,&\quad \mbox{if }u_\al = 0.
           \end{cases}
\]
To prove Proposition \ref{prop:uniform estimates}, we need the following lemma about \emph{Neumann functions}.

\begin{lem}\label{lem:neumann f}
 There exists a unique weak solution $G_\al$ of
\be\label{neumann}
\begin{cases}
 -\Delta_g G_\al=0,& \quad \mbox{on }M,\\
\frac{\pa_g G_\al}{\pa \nu}+b_\al G_\al=\delta_{x_\al}, &\quad \mbox{on }\pa M,
\end{cases}
\ee
where $\delta_{x_\al}$ is the delta measure centered at $x_\al$. Moreover, $G_\al\in C^1_{loc}(\overline M\setminus\{x_\al\})$ and
\be\label{neumann behavior}
C^{-1}\mathrm{dist}_g(x,x_\al)^{2-n}\leq G_\al(x)\leq C\mathrm{dist}_g(x,x_\al)^{2-n}, \quad \mbox{for all }x\in \overline{M},
\ee
where $C>0$ depends only on $M, g$.
\end{lem}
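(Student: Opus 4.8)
The plan is to construct $G_\al$ by the standard parametrix method for boundary value problems, exploiting that the zeroth-order coefficient $b_\al$ is nonnegative and uniformly bounded (between $0$ and $1$) so that all constants can be made independent of $\al$. First I would establish existence and uniqueness in the weak sense: for test functions $\varphi\in H^1(M)$, the bilinear form $a_\al(u,\varphi)=\int_M \langle\nabla_g u,\nabla_g\varphi\rangle\,\ud v_g+\int_{\pa M} b_\al u\varphi\,\ud s_g$ is bounded and coercive on $H^1(M)$ — coercivity follows because $b_\al\ge 0$ together with the trace-type inequality \eqref{interp} (which controls $\|u\|_{L^2(\pa M)}$ by the gradient plus a lower-order term, and the latter is absorbed since $b_\al$ is not identically zero; alternatively one uses that the only $H^1$ function with zero gradient and zero trace where $b_\al>0$ is the zero function). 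The source $\delta_{x_\al}$ is not in $H^{-1}(M)$, so I would instead define $G_\al$ as a limit: solve $a_\al(G_\al^{(k)},\varphi)=\int_{\pa M}\rho_k\varphi\,\ud s_g$ for a sequence $\rho_k$ of smooth approximations of the delta measure supported near $x_\al$, and show the $G_\al^{(k)}$ converge in $L^p$ for $p<\frac{n-1}{n-2}$ and locally in $C^1$ away from $x_\al$.

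The heart of the lemma is the two-sided bound \eqref{neumann behavior}. For the upper bound I would write $G_\al=\Gamma+R_\al$, where $\Gamma$ is an explicit local parametrix: in Fermi coordinates centered at $x_\al$, take $\Gamma(x)=c_n\,\bigl(|x'|^2+x_n^2\bigr)^{-(n-2)/2}$ times a cutoff, which solves the model Neumann problem on the half-space (with $b_\al$ contributing only a lower-order, integrable error because $b_\al$ is bounded and $\Gamma\in L^1_{loc}$ against boundary measure). By Lemma 1.6, the metric is Euclidean to order $|x|$, so $-\Delta_g\Gamma$ and the boundary defect $\frac{\pa_g\Gamma}{\pa\nu}+b_\al\Gamma-\delta_{x_\al}$ are controlled: the remainder $R_\al$ then solves an equation with right-hand side in $L^q$ for some $q$ above the borderline, uniformly in $\al$, and elliptic boundary regularity (Cherrier \cite{Ch}, as cited) gives $R_\al\in C^{0}(\overline M)$ with a uniform bound. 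Combining, $G_\al(x)\le C\,\mathrm{dist}_g(x,x_\al)^{2-n}$ with $C$ independent of $\al$. For the lower bound I would use the maximum principle: $G_\al>0$ on $\overline M$ since it is harmonic with a nonnegative Robin condition and a positive source; then compare $G_\al$ from below with a multiple of the parametrix $\Gamma$ near $x_\al$ (using that the boundary operator applied to $c\Gamma$ is dominated by $\delta_{x_\al}$ plus something small), and away from $x_\al$ use a Harnack inequality plus the fact that $G_\al$ cannot vanish to conclude $G_\al\ge C^{-1}$ on the complement of a small ball, hence $G_\al\ge C^{-1}\mathrm{dist}_g(x,x_\al)^{2-n}$ globally.

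The main obstacle I expect is keeping all constants uniform in $\al$. The coefficient $b_\al$ depends on $u_\al$ in a nontrivial way — it involves $\bigl(\|u_\al\|_{L^r}/u_\al\bigr)^{2-r}$, which blows up where $u_\al$ is small — but the definition truncates it at $1$, so $0\le b_\al\le 1$ pointwise and this is the only property one needs; I would emphasize that the construction and estimates use only the $L^\infty$ bound on $b_\al$ and its nonnegativity, not any regularity of $b_\al$, so that measurability of $b_\al$ suffices for the weak formulation and the maximum principle. A secondary technical point is that coercivity could degenerate if $b_\al\equiv 0$ on a large set, but since $b_\al\ge \frac{n-2}{2}h_g$ on $\{u_\al\ne0\}$ and equals $1$ on $\{u_\al=0\}$ one always has $b_\al$ bounded below away from $-\infty$; combined with \eqref{interp} this yields a uniform Poincaré-type estimate $\|u\|_{H^1(M)}^2\le C a_\al(u,u)$ with $C=C(M,g)$, which is what drives the entire argument.
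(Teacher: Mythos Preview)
There is a genuine gap in your coercivity argument: you repeatedly assert that $0\le b_\al\le 1$, but this is false. The upper bound holds by the truncation, yet from the definition
\[
b_\al=\min\Bigl\{\tfrac{n-2}{2}h_g+\al\bigl(\|u_\al\|_{L^r}/u_\al\bigr)^{2-r},\,1\Bigr\}
\]
the lower bound is not $0$ but rather $\tfrac{n-2}{2}h_g$ plus a nonnegative term, and the mean curvature $h_g$ can be negative somewhere on $\pa M$. So $b_\al$ may well take negative values, and every step in your outline that leans on the sign --- coercivity of the bilinear form $a_\al$, the maximum principle giving $G_\al>0$, the Harnack lower bound away from $x_\al$ --- is unsupported as written. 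Your fallback remark that ``$b_\al\ge \tfrac{n-2}{2}h_g$'' is true but useless for the same reason.

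What the paper actually does is prove the quantitative claim \eqref{eq:b-al 0}, namely $\mathrm{vol}_g\{b_\al<\tfrac12\}\to 0$ as $\al\to\infty$: on this bad set one has $\al(\|u_\al\|_{L^r}/u_\al)^{2-r}<\tfrac12(1+(n-2)|h_g|)$, and integrating the $(2-r)$-th power of $\|u_\al\|_{L^r}/u_\al$ over any such set forces its measure to be $O(\al^{-r/(2(2-r))})$. With this in hand, $\|(b_\al-\tfrac12)^-\|_{L^{n-1}(\pa M)}$ is small, and one absorbs the negative part into the gradient term via the sharp trace inequality of \cite{LZ97}, obtaining the uniform coercivity
\[
\int_M|\nabla_g u|^2+\int_{\pa M}b_\al u^2\;\ge\;\tfrac14\Bigl(\int_M|\nabla_g u|^2+\int_{\pa M}u^2\Bigr).
\]
This is precisely the uniform first-eigenvalue lower bound you need, and it is the nontrivial content of the lemma; once it is in place, your parametrix-plus-remainder scheme for the two-sided bound \eqref{neumann behavior} is fine and is essentially what the paper means by ``standard elliptic equation theory.'' But you cannot get to that point without the measure estimate on $\{b_\al<\tfrac12\}$.
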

\begin{proof}
We claim that
\be\label{eq:b-al 0}
 \lim_{\al \to \infty}vol_g\{b_\al <\frac{1}{2}\}=0.
\ee
Indeed, for every measurable set $E\subset \subset \pa M\cap\{u_\al >0\}$, we have
\[
 vol_g(E)=\int_{E}\,\ud s_g=\int_{E}u_\al^{r/2}u_\al^{-r/2}\,\ud s_g\leq \|u_\al\|^{r/2}_{L^r(E)}\|u_\al^{-1}\|^{r/2}_{L^r(E)}.
\]
It follows that
\[
 \begin{split}
  \|(\|u_\al\|_{L^r(\pa M)}u_\al^{-1})^{2-r}\|_{L^{r/(2-r)}(E)}&=\|u_\al\|^{2-r}_{L^r(\pa M)}\|u_\al^{-1}\|^{2-r}_{L^r(E)}\\
&\geq (vol_g(E))^{2(2-r)/r}.
 \end{split}
\]
Note that
\[
 \al (\|u_\al\|_{L^r(\pa M)}u_\al^{-1})^{2-r}< \frac{1}{2}(1+(n-2)|h_g|),
\]
if $b_\al <1/2$. Since $\{b_\al <\frac{1}{2}\}\subset\subset \pa M\cap\{u_\al >0\}$,
\[
 (vol_g\{b_\al <\frac{1}{2}\})^{2(2-r)/r}\leq \frac{C}{\al}.
\]
We verified the claim.

Notice that $b_\al$ is uniformly bounded and Lipschitz. Thus,
\[
\begin{split}
&\int_{M}|\nabla_g u|^2\ud v_g+\int_{\pa M} b_\al u^2\ud s_g\\
&\ge\int_{M}|\nabla_g u|^2\ud v_g+\frac{1}{2}\int_{\pa M} u^2\ud s_g-\int_{\pa M} (b_\al-\frac{1}{2})^- u^2\ud s_g\\
&\ge \int_{M}|\nabla_g u|^2\ud v_g+\frac{1}{2}\int_{\pa M} u^2\ud s_g- \|(b_\al-\frac{1}{2})^- \|_{L^{n-1}(\pa M)}\|u\|^2_{L^q(\pa M)}\\
&\ge \int_{M}|\nabla_g u|^2\ud v_g+\frac{1}{2}\int_{\pa M} u^2\ud s_g-C\|(b_\al-\frac{1}{2})^- \|_{L^{n-1}(\pa M)} (\int_{M}|\nabla_g u|^2\ud v_g+\int_{\pa M} u^2\ud s_g)\\
&\ge \frac{1}{4} \int_{M}|\nabla_g u|^2\ud v_g+\frac{1}{4}\int_{\pa M} u^2\ud s_g,
\end{split}
\]
where we have used Theorem 0.1 in \cite{LZ97} and \eqref{eq:b-al 0}. It follows that the first eigenvalue $\lda_{1,\al}$ of
\be
\begin{cases}
 -\Delta_g f=0,& \quad \mbox{on }M,\\
\frac{\pa_g f}{\pa \nu}+b_\al f=\lda_{1,\al}f, &\quad \mbox{on }\pa M
\end{cases}
\ee
is uniformly lower bounded by a positive number. Thus, by standard elliptic equation theory there exists a solution of \eqref{neumann} satisfying \eqref{neumann behavior}.
\end{proof}

\begin{proof}[Proof of Proposition \ref{prop:uniform estimates}] Set $\varphi_\al=\mu_\al^{(n-2)/2}G_\al$, $w_\al=u_\al/\varphi_\al$ and $\hat{g}=\varphi_\al^{\frac{4}{n-2}}g$.
By the conformal invariance (see, e.g., (1.8) in \cite{E92a}), it is direct to verify that for $\al$ large, $w_\al$ satisfies
\[
\begin{cases}
 \Delta_{\hat{g}}w_\al=0&\quad \mbox{on }M,\\
\frac{\pa_{\hat{g}}w_\al}{\pa \nu}=\ell_\al w_\al^{q-1}-w_\al\varphi_\al^{-\frac{n}{n-2}}(\frac{\pa_g \varphi_\al}{\pa \nu}+\frac{n-2}{2}h_g\varphi_\al+\al\|u_\al\|_{L^r}^{2-r}u_\al^{r-2}\varphi_\al) & \quad \mbox{on }\pa M\setminus\{x_\al\}.
\end{cases}
\]
By our choice of $G_\al$, we have
\[
\begin{cases}
 \Delta_{\hat{g}}w_\al=0&\quad \mbox{on }M,\\
\frac{\pa_{\hat{g}}w_\al}{\pa \nu}\leq \ell_\al w_\al^{q-1} & \quad \mbox{on }\pa M\setminus\{x_\al\}.
\end{cases}
\]
Then the Moser iterations procedure on page 465-471 of \cite{LZ97} implies that
\[
 \|w_\al\|_{L^\infty(M\setminus B_{\mu_\al}(x_\al))}\leq C.
\]
Recall that $v_\al\to U$ in $C^2_{loc}$, from which we also have $ \|w_\al\|_{L^\infty(B_{\mu_\al}(x_\al))}\leq C$. Proposition \ref{prop:uniform estimates} follows immediately.
\end{proof}

\begin{cor}
\label{cor:choose delta}
For any small $\delta>0$, there exists a constant $C>0$ depending only on $M,g, \delta$ such that
\[
\int_{M\setminus B_{\delta/2}(x_\al)}|\nabla_g u_\al|^2\leq C\mu_\al^{n-2},
\]
where $ B_{\delta/2}(x_\al)$ centered at $x_\al$ with radius $\delta/2$. Consequently, we can select $\delta_\al\in [\delta/2,\delta]$ such that
\[
\int_{\pa B_{\delta_\al}(x_\al)}|\nabla_g u_\al|^2\leq C\mu_\al^{n-2}.
\]
\end{cor}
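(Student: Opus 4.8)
The plan is to exploit the pointwise decay estimate of Proposition~\ref{prop:uniform estimates} together with the fact that $u_\al$ is harmonic, so that interior elliptic estimates upgrade the $L^\infty$ bound on $u_\al$ away from $x_\al$ to a gradient bound of the same order. First I would apply Proposition~\ref{prop:uniform estimates} on the region $M\setminus B_{\delta/4}(x_\al)$: there $\mathrm{dist}_g(x,x_\al)\geq \delta/4$, hence $u_\al(x)\leq C\mu_\al^{(n-2)/2}(\delta/4)^{2-n}\leq C(\delta)\mu_\al^{(n-2)/2}$. Since $\Delta_g u_\al=0$ on $M$ and the Neumann-type boundary condition \eqref{el equ} holds on $\pa M$, boundary and interior elliptic estimates (for instance the gradient estimate for harmonic functions with the given oblique boundary data, using that $\ell_\al$, $h_g$, and $\al\|u_\al\|_{L^r}^{2-r}u_\al^{r-2}$ are controlled on this region once we know $u_\al$ is bounded there) give $\|\nabla_g u_\al\|_{L^\infty(M\setminus B_{\delta/2}(x_\al))}\leq C(\delta)\mu_\al^{(n-2)/2}$. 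Squaring and integrating over the fixed-volume region $M\setminus B_{\delta/2}(x_\al)$ yields $\int_{M\setminus B_{\delta/2}(x_\al)}|\nabla_g u_\al|^2\,\ud v_g\leq C(\delta)\mu_\al^{n-2}$, which is the first assertion.

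For the second assertion I would use a pigeonhole (mean value) argument on the annulus $B_\delta(x_\al)\setminus B_{\delta/2}(x_\al)$. By the first part,
\[
\int_{\delta/2}^{\delta}\left(\int_{\pa B_t(x_\al)}|\nabla_g u_\al|^2\,\ud\sigma_g\right)\ud t \;=\;\int_{B_\delta(x_\al)\setminus B_{\delta/2}(x_\al)}|\nabla_g u_\al|^2\,\ud v_g\;\leq\; C\mu_\al^{n-2},
\]
using the coarea formula. Since the interval $[\delta/2,\delta]$ has length $\delta/2$, there must exist $\delta_\al\in[\delta/2,\delta]$ with $\int_{\pa B_{\delta_\al}(x_\al)}|\nabla_g u_\al|^2\,\ud\sigma_g\leq (2/\delta)\,C\mu_\al^{n-2}\leq C'\mu_\al^{n-2}$, absorbing $\delta$ into the constant. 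This gives the stated choice of $\delta_\al$.

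The only genuinely delicate point is justifying the gradient bound $\|\nabla_g u_\al\|_{L^\infty(M\setminus B_{\delta/2}(x_\al))}\leq C(\delta)\mu_\al^{(n-2)/2}$ uniformly in $\al$, since $u_\al$ satisfies a nonlinear boundary condition; however, on the region away from $x_\al$ the nonlinear terms $\ell_\al u_\al^{q-1}$ and $\al\|u_\al\|_{L^r}^{2-r}u_\al^{r-1}$ are, by Proposition~\ref{prop:uniform estimates}, bounded by $C(\delta)\mu_\al^{(n-2)/2}$ times a bounded factor, and $h_g$ is smooth, so the boundary data is $O(\mu_\al^{(n-2)/2})$ in, say, $C^{0,r-1}$; standard boundary Schauder/gradient estimates for the harmonic equation with oblique-derivative data (with constants depending only on $M,g,\delta$) then close the argument. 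I expect this elliptic-regularity bookkeeping—ensuring all constants are independent of $\al$—to be the main obstacle, but it is routine given the uniform pointwise estimate already established.
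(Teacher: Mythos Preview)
Your coarea/pigeonhole argument for selecting $\delta_\al$ is correct and is exactly what the paper does. The gap is in the first step, where you go through a pointwise gradient bound.

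You claim that, away from $x_\al$, the boundary data in \eqref{el equ} is $O(\mu_\al^{(n-2)/2})$, in particular that the Robin coefficient $\al\|u_\al\|_{L^r}^{2-r}u_\al^{r-2}$ is ``controlled once $u_\al$ is bounded there.'' But $r-2=-2/n<0$, so on the outer region, where Proposition~\ref{prop:uniform estimates} only gives $0<u_\al\le C\mu_\al^{(n-2)/2}$, the factor $u_\al^{r-2}$ is large, not small. Nor is $\al\|u_\al\|_{L^r}^{2-r}$ bounded: from Proposition~\ref{prop:uniform estimates} one gets $\|u_\al\|_{L^r(\pa M)}\sim\mu_\al$ for $n\ge 5$, so $\al\|u_\al\|_{L^r}^{2-r}\sim\al\mu_\al^{2/n}$, while the only a priori information $\al\|u_\al\|_{L^r}^2\to 0$ yields merely $\al=o(\mu_\al^{-2})$. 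Feeding the pointwise bound on $u_\al$ into the full term $\al\|u_\al\|_{L^r}^{2-r}u_\al^{r-1}$ gives at best an upper bound of order $\al\,\mu_\al^{(n^2-4n+8)/(2n)}$; asking this to be $O(\mu_\al^{(n-2)/2})$ would force $\al\le C\mu_\al^{(n-4)/n}\to 0$, contradicting $\al\to\infty$. So the Neumann data is not known to be controlled, and the Schauder/gradient constants cannot be kept independent of $\al$ by your route.

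The paper instead uses a Caccioppoli-type energy argument: multiply \eqref{el equ} by $\eta^2 u_\al$ with a cutoff $\eta$ supported in $M\setminus B_{\delta/2}(x_\al)$ and integrate by parts. The decisive observation is that the troublesome term enters with a favorable sign,
\[
-\al\|u_\al\|_{L^r}^{2-r}\int_{\pa M}\eta^2 u_\al^{r}\,\ud s_g\le 0,
\]
and is simply dropped, leaving $\int_M\eta^2|\nabla_g u_\al|^2\le C\int_{\pa M}(\eta^2 u_\al^2+\eta^2 u_\al^q)\,\ud s_g+C\int_M|\nabla_g\eta|^2 u_\al^2\,\ud v_g$, which is $O(\mu_\al^{n-2})$ directly from Proposition~\ref{prop:uniform estimates}. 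Only the \emph{sign} of the $\al$-term is used, not its size; this is what makes the energy method go through where the pointwise elliptic estimate does not.
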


\begin{proof}
Let $\eta$ be a cut-off function satisfying $Supp(\eta)\subset M\setminus B_{\delta/2}(x_\al)$ and  $\eta\equiv 1$ in  $M\setminus B_{\delta}(x_\al)$.
 Multiplying the Euler-Lagrange equation \eqref{el equ} by
$\eta^2u_\al$ and integrating by parts, we have
\[
  \int_{M} \nabla_gu_\al \nabla_g(\eta^2 u_\al)\,\ud v_g\leq -\frac{n-2}{2}\int_{\pa M}h\eta^2 u_\al^2\,\ud s_g +\ell_\al \int_{\pa M}\eta^2 u_\al^q\,\ud s_g.
\]
It follows that
\[
  \int_{M} \eta^2|\nabla_gu_\al|^2  \,\ud v_g\leq C\int_{\pa M}\Big(\eta^2 u_\al^2 +\eta^2 u_\al^q\Big)\,\ud s_g + C\int_{M} |\nabla_g \eta|^2 u_\al^2\,\ud v_g.
\]
Therefore, this corollary follows immediately from Proposition \ref{prop:uniform estimates}. %The second one follows from the mean value theorem by considering the integral domain$B_{\delta}(x_\al)\setminus B_{\delta/2}(x_\al)$ for example.
\end{proof}

\section{Energy estimates and proof of Theorem \ref{thm:Tr}}

For some small $\delta_0$ to be determined in Lemma \ref{lem:positive def1}, let
 $\psi_\al\in C^\infty(\overline M)$ satisfy $\psi_\al(x_\al)=1$, $\frac{1}{2}\leq \psi_\al\leq 2$, $\|\psi_\al\|_{C^2(\overline{M})}\leq C$, and
\[
 \begin{cases}
  \Delta_{g}\psi_\al=0,&\quad \mbox{in }B_{2\delta_0}^+,\\
\frac{\pa_{g}\psi_\al}{\pa \nu}+\frac{n-2}{2}h_g\psi_\al=0, & \quad \mbox{on }\pa'B_{2\delta_0}^+.
 \end{cases}
\]
As in the previous section, here we used the Fermi coordinate with respect to metric $g$ centered at $x_\al$.
Set $\hat{g}=\psi_\al^{4/(n-2)}g$. It is easy to see that $h_{\hat{g}}=0$ on $\pa'B_{\delta_0}^+$. Hence, $u_\al/\psi_\al$ satisfies
\be\label{quotient eq}
 \begin{cases}
  \Delta_{\hat{g}}\frac{u_\al}{\psi_\al}=0,&\quad \mbox{in }B_{2\delta_0}^+,\\
\frac{\pa_{\hat{g}}}{\pa \nu}\frac{u_\al}{\psi_\al}+\al\|u_\al\|^{2-r}_{L^r(\pa M)}\psi_\al^{1-q}u_\al^{r-1}=\ell_\al(\frac{u_\al}{\psi_\al})^{q-1}, & \quad \mbox{on }\pa'B_{2\delta_0}^+.
 \end{cases}
\ee
It follows that the maximum of $u_\al/\psi_\al$ on $\overline{M}$ is achieved at some point on $\pa M$ which is denoted as $\hat{x}_\al$.
In view of the fact $u_\al(x_\al)\to \infty$ and Proposition \ref{prop:uniform estimates}, we have
$|\hat x_\al-x_\al| \to 0$ and thus $\frac{u_\al(\hat x_\al)}{\varphi_\al(\hat x_\al)u_\al(x_\al)}\to 1$ as $\al \to \infty$. From now on, we use the Fermi coordinate with respect to metric $\hat{g}$ centered at $x_\al$. Since $\psi_\al(x_\al)=1$, by simple blow-up argument and the same proof of Proposition \ref{prop:energy converges} we can establish
\be\label{ec1}
 \lim_{\al \to \infty}\left(\int_{B_{\delta}^+}|\nabla_{\hat{g}}(\frac{u_\al}{\psi_\al}-U_{\mu_\al})|^2\,\ud v_{\hat{g}}+\int_{\pa' B_\delta^+}|\frac{u_\al}{\psi_\al}-U_{\mu_\al}|^q\,\ud s_{\hat{g}}\right)=0.
\ee
As in Corollary \ref{cor:choose delta}, we can select $\delta_\al \in [\delta_0/2,\delta_0]$ such that
\be\label{fix del}
\int_{\pa''B_{\delta_\al}}|\nabla_{\hat{g}}(\frac{u_\al}{\psi_\al})|^2\,\ud v_{\hat{g}} \leq C \mu_\al^{n-2},
\ee
where $C>0$ is independent of $\al$.
Let us focus on the upper-half ball $B^+_{\delta_\al}(0)$, which is equipped with the Riemannian metric
$\hat g$.

Let $h_{\xi,\lda }(z)$, $z\in \overline B^+_{\delta_\al}(0)$, be the classical solution of
\[
\begin{cases}
 \Delta_{\hat g}h_{\xi,\lda }=0& \quad \mbox{in }B^+_{\delta_\al}, \\
h_{\xi,\lda }=U_{\xi,\lda}&\quad \mbox{on }\pa''B^+_{\delta_\al},\\
\frac{\pa_{\hat{g}}h_{\xi,\lda}}{\pa \nu}=0 &\quad \mbox{on }\pa'B^+_{\delta_\al},
\end{cases}
\]
with parameters $\xi\in \pa' B^+_{\delta_\al\mu_\al/2}$ and $\lda>0$,
while let $\chi_\al(z)$ be the solution of
\[
\begin{cases}
 \Delta_{\hat g}\chi_\al=0& \quad \mbox{in }B^+_{\delta_\al}, \\
\chi_\al=\frac{u_\al}{\psi_\al}&\quad \mbox{on }\pa''B_{\delta_\al},\\
\frac{\pa_{\hat{g}}\chi_\al}{\pa \nu}=0 &\quad \mbox{on }\pa'B^+_{\delta_\al}.
\end{cases}
\]
Then $u_\al/\psi_\al-\chi_\al\in H_{0,L}(B^+_{\delta_\al})$, $U_{\xi,\lda}-h_{\xi,\lda}\in H_{0,L}(B^+_{\delta_\al})$ are the projections of $u_\al/\psi_\al$ and
$U_{\xi,\lda}$ on $H_{0,L}(B_{\delta_\al})$, respectively. Here
\[
 H_{0,L}(B^+_{\delta_\al}):=\{u\in H^1(B^+_{\delta_\al}): u=0 \mbox{ on }\pa''B^+_{\delta_\al} \mbox{ in trace sense}\}
\]
is a Hilbert space with the inner product
\[
 \langle u, v\rangle_{\hat{g}}:=\int_{B^+_{\delta_\al}}\nabla_{\hat{g}}u\nabla_{\hat{g}} v\,\ud v_{\hat{g}}.
\]
Denote $\|u\|=\sqrt{\langle u, u\rangle_{\hat{g}}}$, which is a norm for $u\in H_{0,L}(B^+_{\delta_\al})$.

Set
\[
 \sigma_{\xi,\lda}=U_{\xi,\lda}-h_{\xi,\lda},
\]
which satisfies $\sigma_{\xi,\lda}\leq U_{\xi,\lda}$ and
\[
 \begin{cases}
 \Delta_{\hat g}\sigma_{\xi,\lda} = \Delta_{\hat g}U_{\xi,\lda},& \quad \mbox{in }B^+_{\delta_\al}, \\
\sigma_{\xi,\lda}=0,&\quad \mbox{on }\pa''B^+_{\delta_\al},\\
\frac{\pa_{\hat{g}}\sigma_{\xi,\lda}}{\pa \nu}=\frac{\pa_{\hat{g}}U_{\xi,\lda}}{\pa \nu} &\quad \mbox{on }\pa'B^+_{\delta_\al}.
\end{cases}
\]
Let $(t_\al,\xi_\al,\lda_\al)\in [\frac{1}{2},\frac{3}{2}]\times \overline{\pa' B^+_{\delta_\al\mu_\al/2}}\times [\frac{\mu_\al}{2},\frac{3\mu_\al}{2}]$ be such that
\[
 \begin{split}
  &\|\frac{u_\al}{\psi_\al}-\chi_\al-t_\al \sigma_{\xi_\al,\lda_\al}\|\\&
=\min\Big\{\|\frac{u_\al}{\psi_\al}-\chi_\al-t \sigma_{\xi,\lda}\|: (t,\xi,\lda)\in [\frac{1}{2},\frac{3}{2}]\times \overline{\pa' B^+_{\delta_\al\mu_\al/2}}\times [\frac{\mu_\al}{2},\frac{3\mu_\al}{2}] \Big\},
 \end{split}
\]
and
\[
 w_\al= \frac{u_\al}{\psi_\al}-\chi_\al-t_\al \sigma_{\xi_\al,\lda_\al}.
\]
Define
\[
 W_\al=\{w\in H_{0,L}(B^+_{\delta_\al}): \langle \sigma_{\xi_\al,\lda_\al}, w\rangle_{\hat{g}}=0 \mbox{ and } \langle u, w\rangle_{\hat{g}}=0 \mbox{ for all }u\in E_\al\},
\]
where $E_\al\subset H_{0,L}(B^+_{\delta_\al})$ is the tangent space at $\sigma_{\xi_\al,\lda_\al}$ of the $n$-dimensional surface
$\{\sigma_{\xi,\lda}: \xi\in \pa' B_{\mu_\al \delta_\al}, \lda >0\}\subset H_{0,L}(B_{\delta_\al}^+)$. More explicitly,
\[
 E_\al=\mathrm{span}\{\frac{\pa \sigma_{\xi,\lda_\al}}{\pa \xi} \Big|_{\xi=\xi_\al}, \frac{\pa \sigma_{\xi_\al,\lda}}{\pa \lda}\Big|_{\lda=\lda_\al}\}.
\]

For brevity, we denote $h_\al=h_{\xi_\al,\lda_\al}$ and $\sigma_\al=\sigma_{\xi_\al,\lda_\al}$

\begin{lem} \label{lem:small properties}
We have,
\begin{itemize}
  \item[1)] $\|\nabla_{\hat{g}} h_\al\|_{L^2(B_{\delta_\al})}+\|h_\al\|_{L^\infty(B_{\delta_\al})}\leq C\mu_\al^{(n-2)/2}$,
  \item[2)] $\|\nabla_{\hat{g}} \chi_\al\|_{L^2(B_{\delta_\al})}+\|\chi_\al\|_{L^\infty(B_{\delta_\al})}\leq C\mu_\al^{(n-2)/2}$,
\end{itemize}
 for some positive constant $C$ independent of $\al$, and
\begin{itemize}
  \item[3)] $\|w_\al\| \to 0$,
  \item[4)] $t_\al\to 1$,
  \item[5)] $\mu_\al^{-1}|\xi_\al|\to 0$,
  \item[6)] $\mu_\al^{-1}\lda_\al\to 1$,
\end{itemize}
as $\al\to \infty$. Furthermore, $w_\al\in W_\al$.
\end{lem}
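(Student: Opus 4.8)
The plan is to establish the six claims roughly in the order they are stated, since the later ones build on the earlier ones, and to finish with the containment $w_\al\in W_\al$ by a dimension-counting/implicit-function-theorem argument. First I would prove 1) and 2) together. For 2), note that $\chi_\al$ is harmonic in $B^+_{\delta_\al}$ with vanishing Neumann data on $\pa'B^+_{\delta_\al}$ and boundary value $u_\al/\psi_\al$ on $\pa''B_{\delta_\al}$; by the maximum principle $\|\chi_\al\|_{L^\infty}\le \|u_\al/\psi_\al\|_{L^\infty(\pa''B_{\delta_\al})}$, which by Proposition~2.6 (the uniform estimate $u_\al\le C\mu_\al^{(n-2)/2}\mathrm{dist}_g(x,x_\al)^{2-n}$) applied at distance $\gtrsim\delta_0$ is $\le C\mu_\al^{(n-2)/2}$; the gradient bound $\|\nabla_{\hat g}\chi_\al\|_{L^2}\le C\mu_\al^{(n-2)/2}$ comes from \eqref{fix del} together with a standard energy estimate (multiply the equation for $\chi_\al$ by $\chi_\al$, integrate by parts, use the trace/boundary control). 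For 1), the same reasoning applies to $h_{\xi_\al,\lda_\al}$, whose boundary value on $\pa''B_{\delta_\al}$ is $U_{\xi_\al,\lda_\al}$; since $|\xi_\al|\lesssim \mu_\al\delta_\al$ and $\lda_\al\sim\mu_\al$, on $\pa''B_{\delta_\al}$ one has $U_{\xi_\al,\lda_\al}\le C\mu_\al^{(n-2)/2}\delta_0^{2-n}$, giving the $L^\infty$ bound; the $L^2$ gradient bound follows either from the maximum principle comparison $0\le h_{\xi_\al,\lda_\al}\le \sup_{\pa''}U_{\xi_\al,\lda_\al}$ plus interior estimates, or directly from the explicit decay of $U_{\xi,\lda}$ away from the origin.

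Next, claim 3) $\|w_\al\|\to0$: by definition $w_\al$ is the minimizer over $(t,\xi,\lda)$ in the prescribed boxes of $\|u_\al/\psi_\al-\chi_\al-t\sigma_{\xi,\lda}\|$, so it suffices to exhibit one admissible choice making the norm small. Take $t=1$, $\xi=0$, $\lda=\mu_\al$: then $\sigma_{0,\mu_\al}=U_{\mu_\al}-h_{0,\mu_\al}$, and $u_\al/\psi_\al-\chi_\al-\sigma_{0,\mu_\al}=(u_\al/\psi_\al-U_{\mu_\al})-(\chi_\al-h_{0,\mu_\al})$. The first difference has $\|\nabla_{\hat g}(\cdot)\|_{L^2(B_\delta^+)}\to0$ by \eqref{ec1}; the second has gradient $L^2$-norm $O(\mu_\al^{(n-2)/2})\to0$ by 1) and 2). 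Hence the minimal norm tends to $0$. Claims 4), 5), 6): these say the optimal parameters converge to $(1,0,\mu_\al)$ in the rescaled sense. The argument is that if, along a subsequence, $t_\al\to t_*\ne1$ or $\mu_\al^{-1}\xi_\al\to\xi_*\ne0$ or $\mu_\al^{-1}\lda_\al\to\lda_*\ne1$, then after the blow-up rescaling $z\mapsto \mu_\al z$ the function $\mu_\al^{-(n-2)/2}w_\al(\mu_\al\,\cdot)$ would converge (using \eqref{ec1} rewritten as $v_\al\to U$, and the explicit form \eqref{bub} of $U$ together with the scaling properties of $U_{\xi,\lda}$) to $U-t_*U_{\xi_*,\lda_*}$ in $\dot H^1(\R^n_+)$, which is a fixed nonzero element; but $\|w_\al\|\to0$ forces this limit to vanish, and $U-t_*U_{\xi_*,\lda_*}\equiv0$ forces $t_*=1$, $\xi_*=0$, $\lda_*=1$ by uniqueness of the bubble profile — a contradiction. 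The harmonic-extension corrections $h_{\xi_\al,\lda_\al}$ and $\chi_\al$ contribute nothing to the rescaled limit because of 1)–2).

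Finally, $w_\al\in W_\al$. Since $(t_\al,\xi_\al,\lda_\al)$ is an interior minimizer once we know (from 4), 5), 6)) that it lies in the interior of the box $[\tfrac12,\tfrac32]\times\overline{\pa'B^+_{\delta_\al\mu_\al/2}}\times[\tfrac{\mu_\al}{2},\tfrac{3\mu_\al}{2}]$ for large $\al$, the first-order optimality conditions hold: differentiating $\tfrac12\|u_\al/\psi_\al-\chi_\al-t\sigma_{\xi,\lda}\|^2$ in $t$ gives $\langle w_\al,\sigma_{\xi_\al,\lda_\al}\rangle_{\hat g}=0$, and differentiating in $\xi$ and in $\lda$ gives $\langle w_\al, t_\al\,\pa_\xi\sigma_{\xi,\lda_\al}|_{\xi_\al}\rangle_{\hat g}=0$ and $\langle w_\al, t_\al\,\pa_\lda\sigma_{\xi_\al,\lda}|_{\lda_\al}\rangle_{\hat g}=0$; since $t_\al>0$, these say exactly that $w_\al\perp E_\al$ and $w_\al\perp\sigma_{\xi_\al,\lda_\al}$, i.e. $w_\al\in W_\al$. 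The main obstacle I anticipate is the rescaling bookkeeping in 4)–6): one has to check carefully that the metric $\hat g_\al(x)=\hat g(\mu_\al x)\to\delta$ in $C^1_{loc}$, that $\sigma_{\xi,\lda}$ rescales to $U_{\xi/\mu_\al,\lda/\mu_\al}$ modulo an error going to zero in $\dot H^1$, and that the nondegeneracy of the map $(\xi,\lda)\mapsto U_{\xi,\lda}$ (its differential $E_\al$ being $n$-dimensional and transverse to $\sigma_{\xi_\al,\lda_\al}$) is preserved under the rescaling and the harmonic corrections — this is where one needs the precise Liouville classification \eqref{bub} and its kernel, rather than just qualitative compactness.
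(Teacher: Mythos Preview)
Your proposal is correct and follows essentially the same approach as the paper: the $L^\infty$ bounds in 1) and 2) via the maximum principle and the $L^2$ gradient bounds via an extension/energy argument using the explicit decay of $U_{\xi,\lambda}$ on $\partial''B^+_{\delta_\alpha}$ (resp.\ Proposition~\ref{prop:uniform estimates} and \eqref{fix del}); claim 3) by evaluating at the admissible choice $(t,\xi,\lambda)=(1,0,\mu_\alpha)$ and invoking \eqref{ec1}; claims 4)--6) by reducing to $\|t_\alpha U_{\xi_\alpha,\lambda_\alpha}-U_{\mu_\alpha}\|\to 0$ and using the local injectivity of $(t,\xi,\lambda)\mapsto tU_{\xi,\lambda}$; and $w_\alpha\in W_\alpha$ from first-order optimality at an interior minimizer. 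The only cosmetic difference is that the paper obtains 4)--6) directly from $\|t_\alpha U_{\xi_\alpha,\lambda_\alpha}-U_{\mu_\alpha}\|\to 0$ by a ``simple calculation'' rather than by your rescaling/contradiction phrasing---you do not actually need the kernel analysis of the linearized operator here, only that distinct bubbles are at positive $\dot H^1$ distance.
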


\begin{proof} Let $\eta\in H^1(B_{\delta_\al})$ be an extension of $U_{\xi_\al,\lda_\al}|_{\pa''B^+_{\delta_\al}}$ such that
 \be\label{eq:choice of eta}
 \|\eta\|^2_{H^1(B_{\delta_\al})}\leq C\left(\int_{\pa''B^+_{\delta_\al}}|\nabla U_{\xi_\al,\lda_\al}|^2+U_{\xi_\al,\lda_\al}^2\right),
 \ee
 where $C>0$ is independent of  $\al$.
Multiplying the equation of $h_\al$ by $h_\al-\eta\in H_{0,L}(B_{\delta_{\al}})$ and integrating by parts we have
\[
0=\int_{B_{\delta_{\al}}}\nabla_{\hat{g}} h_\al\nabla_{\hat{g}}(h_\al-\eta)\,\ud v_{\hat{g}}\geq \frac12(\|\nabla_{\hat{g}} h_\al\|^2_{L^2}
-\|\nabla_{\hat{g}}\eta\|^2_{L^2}).
\]
Thus we obtained the $L^2$ estimate for $\nabla_{\hat{g}} h_\al$. The  $L^\infty$ estimates for $h_\al$ follows easily from the maximum principle.
Hence, we verified $1)$. Similarly, we can verify $2)$ by taking into account
Proposition \ref{prop:uniform estimates} and \eqref{fix del}.

By the definition of $t_\al$ and $\sigma_\al$,
\[
\begin{split}
\|t_\al\sigma_\al-\sigma_{0,\mu_\al}\|&\leq \|\frac{u_\al}{\psi_\al}-\chi_\al-t_\al\sigma_\al\|+\|\frac{u_\al}{\psi_\al}-\chi_\al-\sigma_{0,\mu_\al}\|\\
&\leq 2\|\frac{u_\al}{\psi_\al}-\chi_\al-\sigma_{0,\mu_\al}\| \\&
\leq 2\|\frac{u_\al}{\psi_\al}-U_{\mu_\al}\|+C\mu_\al^{(n-2)/2} \to 0
\end{split}
\]
as $\al \to \infty$, where we used $1)$, $2)$ and \eqref{ec1}. It follows that $\|w_\al\| \to 0$, i.e., $3)$ holds,  and
\[
\|t_\al U_{\xi_\al,\lda_\al}-U_{\mu_\al}\|\leq \|t_\al\sigma_\al-\sigma_{0,\mu_\al}\|+\|t_\al h_{\xi_\al,\lda_\al}\|+\|h_{0,\mu_\al}\|\to 0
\]
as $\al \to \infty$. A simple calculation yields $4)$, $5)$ and $6)$. Once we have $4)$, $5)$ and $6)$, the minimum of the norm is attained in the interior of
$ [\frac{1}{2},\frac{3}{2}]\times \overline{\pa' B^+_{\delta_\al\mu_\al/2}}\times [\frac{\mu_\al}{2},\frac{3\mu_\al}{2}]$. Hence, an  variational
argument gives $w_\al\in W_\al$.
\end{proof}

\begin{prop} \label{prop:energy est 1}
Assume as the above, we have
\[
\begin{split}
\|w_\al\|+|t_\al^{q-2}\ell_\al-S^{-1}|\leq C\Big\{&\mu_\al\|U^{\frac{n-1}{n-2}}_1\|_{L^{2^{*'}}(B^+_{\mu_\al^{-1}})}\\&
+\ep_\al \|U_1^{r-1}\|_{L^r(\pa' B^+_{\mu_\al^{-1}})}+\mu_\al^{n-2}\|U^{q-2}_1\|_{L^r(\pa' B^+_{\mu_\al^{-1}})}\Big\},
\end{split}
\]
where $\ep_\al$ is given in \eqref{eq:epsilon alpha}.%Recall that $\ep_\al=\al \mu_\al^{n-1-\frac{n-2}{2}r}\|u_\al\|^{2-r}_{L^r(\pa M)}\to 0$ as $\al \to \infty$.
\end{prop}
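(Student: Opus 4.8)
The plan is to test the Euler--Lagrange equation \eqref{quotient eq} against $\sigma_\al$ and against $w_\al$ inside $H_{0,L}(B^+_{\delta_\al})$, to read off $t_\al^{q-2}\ell_\al$ and $\|w_\al\|$ from the resulting linearized identities, and to absorb the leading quadratic term using the non-degeneracy of the bubble $U_1$. Two weak formulations are the starting point. Since $\Delta_{\hat g}\chi_\al=0$ in $B^+_{\delta_\al}$ and $\frac{\pa_{\hat g}\chi_\al}{\pa\nu}=0$ on $\pa'B^+_{\delta_\al}$, we have $\langle\chi_\al,\phi\rangle_{\hat g}=0$ for every $\phi\in H_{0,L}(B^+_{\delta_\al})$, so that, with $z_\al:=u_\al/\psi_\al-\chi_\al=t_\al\sigma_\al+w_\al$, equation \eqref{quotient eq} becomes
\be\label{eq:pf-wk-z}
\langle z_\al,\phi\rangle_{\hat g}=\int_{\pa'B^+_{\delta_\al}}\Big(\ell_\al\big(u_\al/\psi_\al\big)^{q-1}-\al\|u_\al\|^{2-r}_{L^r(\pa M)}\psi_\al^{1-q}u_\al^{r-1}\Big)\phi\,\ud s_{\hat g}.
\ee
Moreover, in the Fermi coordinates of $\hat g$ the unit outer normal of $\pa'B^+_{\delta_\al}$ is $-\pa_{x_n}$ while $U_{\xi,\lda}$ is the \emph{Euclidean} bubble \eqref{bub}, so that the \emph{exact} identity $\frac{\pa_{\hat g}U_{\xi,\lda}}{\pa\nu}=S^{-1}U_{\xi,\lda}^{q-1}$ holds on $\pa'B^+_{\delta_\al}$, whence
\be\label{eq:pf-wk-sigma}
\langle\sigma_{\xi,\lda},\phi\rangle_{\hat g}=\frac1S\int_{\pa'B^+_{\delta_\al}}U_{\xi,\lda}^{q-1}\phi\,\ud s_{\hat g}-\int_{B^+_{\delta_\al}}(\Delta_{\hat g}U_{\xi,\lda})\phi\,\ud v_{\hat g}.
\ee
The residual $\Delta_{\hat g}U_{\xi,\lda}$ does not vanish; Lemma \ref{taylor expansion of fermi} for $\hat g$, the change of variables $x=\mu_\al y$, and the limits $\lda_\al/\mu_\al\to1$, $|\xi_\al|/\mu_\al\to0$ (Lemma \ref{lem:small properties}) bound its contribution to \eqref{eq:pf-wk-sigma} by $C\mu_\al\|U_1^{\frac{n-1}{n-2}}\|_{L^{2^{*'}}(B^+_{\mu_\al^{-1}})}\,\|\phi\|$, which is the first error term.

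For the constant, take $\phi=\sigma_\al$ in \eqref{eq:pf-wk-z}: since $\langle w_\al,\sigma_\al\rangle_{\hat g}=0$ (Lemma \ref{lem:small properties}), the left side is $t_\al\|\sigma_\al\|^2$. Evaluating $\|\sigma_\al\|^2$ from \eqref{eq:pf-wk-sigma} with $\phi=\sigma_\al$, and using $\sigma_\al\le U_{\xi_\al,\lda_\al}$ together with $\|h_\al\|_{L^\infty(B_{\delta_\al})}+\|\chi_\al\|_{L^\infty(B_{\delta_\al})}\le C\mu_\al^{(n-2)/2}$ (Lemma \ref{lem:small properties}, Proposition \ref{prop:uniform estimates}), gives $\|\sigma_\al\|^2=S^{-1}\int_{\pa'B^+_{\delta_\al}}U_{\xi_\al,\lda_\al}^q$ up to the three error terms. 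On the right side of \eqref{eq:pf-wk-z} I would substitute $u_\al/\psi_\al=t_\al U_{\xi_\al,\lda_\al}+(w_\al-t_\al h_\al+\chi_\al)$ and expand $(\cdot)^{q-1}$ to first order, the leading term being $\ell_\al t_\al^{q-1}\int_{\pa'B^+_{\delta_\al}}U_{\xi_\al,\lda_\al}^q$. Comparing the two expressions and dividing by $\int_{\pa'B^+_{\delta_\al}}U_{\xi_\al,\lda_\al}^q\to\int_{\pa\R^n_+}U_1^q>0$ yields $|t_\al^{q-2}\ell_\al-S^{-1}|\le C(\|w_\al\|+\text{three error terms})$; here the $\al$-integral produces the second error term through the definition \eqref{eq:epsilon alpha} of $\ep_\al$, while the $h_\al,\chi_\al$ cross terms produce the third through the scaling identity $\|U_{\xi_\al,\lda_\al}^{q-2}\|_{L^r(\pa'B^+_{\delta_\al})}=C\mu_\al^{(n-2)/2}\|U_1^{q-2}\|_{L^r(\pa'B^+_{\delta_\al/\mu_\al})}$, which converts the $O(\mu_\al^{(n-2)/2})$ bounds for $h_\al,\chi_\al$ into $\mu_\al^{n-2}\|U_1^{q-2}\|_{L^r}$, and the remaining first-order Taylor remainder is dominated by $C(\|w_\al\|+\text{three error terms})$ as well.

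For the norm, take $\phi=w_\al$ in \eqref{eq:pf-wk-z}, so $\|w_\al\|^2=\langle z_\al,w_\al\rangle_{\hat g}$. Expanding $\ell_\al(u_\al/\psi_\al)^{q-1}$ to first order around $t_\al U_{\xi_\al,\lda_\al}$, the zeroth-order term $\ell_\al t_\al^{q-1}\int_{\pa'B^+_{\delta_\al}}U_{\xi_\al,\lda_\al}^{q-1}w_\al$ equals $\ell_\al t_\al^{q-1}S\int_{\pa'B^+_{\delta_\al}}\frac{\pa_{\hat g}U_{\xi_\al,\lda_\al}}{\pa\nu}w_\al=\ell_\al t_\al^{q-1}S\int_{B^+_{\delta_\al}}(\Delta_{\hat g}U_{\xi_\al,\lda_\al})w_\al$, by the exact identity, \eqref{eq:pf-wk-sigma} and $\langle\sigma_\al,w_\al\rangle_{\hat g}=0$, i.e. a metric error. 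Using the (known) limit $t_\al^{q-2}\ell_\al\to S^{-1}$ from \eqref{l converges} and Lemma \ref{lem:small properties} to peel off the piece $(q-1)(t_\al^{q-2}\ell_\al-S^{-1})\int_{\pa'B^+_{\delta_\al}}U_{\xi_\al,\lda_\al}^{q-2}w_\al^2=o(1)\|w_\al\|^2$, one arrives at
\be\label{eq:pf-coerc}
\|w_\al\|^2-\frac{q-1}{S}\int_{\pa'B^+_{\delta_\al}}U_{\xi_\al,\lda_\al}^{q-2}w_\al^2\le C\,(\text{three error terms})\,\|w_\al\|+o(1)\|w_\al\|^2,
\ee
the $o(1)\|w_\al\|^2$ also absorbing the $|w_\al|^{q-1}$ remainder (recall $q>2$ and $\|w_\al\|\to0$). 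It then remains to show that the left side of \eqref{eq:pf-coerc} is $\ge c_0\|w_\al\|^2$ for a fixed $c_0>0$ and all large $\al$, which yields $\|w_\al\|\le C(\text{three error terms})$ and, with the previous paragraph, the proposition. For this I would rescale by $\mu_\al$ and extend $w_\al$ by zero across $\pa''B^+_{\delta_\al/\mu_\al}$ to $\widetilde w\in\dot{H}^1(\R^n_+)$, and use that $\widetilde w$ is asymptotically orthogonal in $\dot{H}^1$ to the null space of the linearization of \eqref{limt equ} at $U_1$ — spanned, by the classification in \cite{LZ95}, by $\pa_{y_1}U_1,\dots,\pa_{y_{n-1}}U_1$ and $\tfrac{n-2}{2}U_1+y\cdot\nabla U_1$ — this orthogonality being forced by the conditions $\langle\sigma_\al,w_\al\rangle_{\hat g}=0$, $\langle u,w_\al\rangle_{\hat g}=0$ for $u\in E_\al$, and the convergence of the rescaled $\sigma_\al,E_\al$ to those modes (Lemma \ref{lem:small properties}); since the quadratic form $\|\cdot\|_{\dot{H}^1(\R^n_+)}^2-\frac{q-1}{S}\int_{\pa\R^n_+}U_1^{q-2}(\cdot)^2$ is positive definite with a spectral gap on the orthogonal complement of that null space, and $\int_{\pa'B^+_{\delta_\al}}U_{\xi_\al,\lda_\al}^{q-2}w^2\le C\|w\|^2$ uniformly in $\al$ (a Hardy-type trace inequality), the desired lower bound follows.

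The hard part is precisely this last step — transporting the non-degeneracy of $U_1$ on $\R^n_+$ to the expanding half-balls $B^+_{\delta_\al/\mu_\al}$ equipped with the approximate null modes $\{\sigma_\al\}\cup E_\al$, with a spectral gap uniform in $\al$, for a critical and hence non-compact problem. The remaining, more bookkeeping-type, difficulty is to verify that every metric, lower-order, and $h_\al/\chi_\al$-projection contribution is controlled by one of the three quantities in the statement; this rests on the scaling identities for $U_1$ recorded above and on Lemma \ref{lem:small properties}, \eqref{fix del} and \eqref{ec1}.
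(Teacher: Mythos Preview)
Your approach is essentially the paper's: test the equation against $w_\al$ and against $\sigma_\al$, control the metric residual $\Delta_{\hat g}U_{\xi_\al,\lda_\al}$ in $L^{2^{*'}}$, and close the $w_\al$-estimate via coercivity of the second variation on $W_\al$. The paper organizes this by first isolating the PDE for $w_\al$ (Lemma~\ref{lem: w equ}) and stating the coercivity separately (Lemma~\ref{lem:positive def1}, proved from Lemmas~\ref{lem:cowhole}--\ref{lem:comanifolds} along exactly the rescaling-plus-perturbation lines you sketch), but the content is the same.

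One point to correct in your coercivity paragraph: the quadratic form $\|\cdot\|_{\dot H^1(\R^n_+)}^2-\frac{q-1}{S}\int_{\pa\R^n_+}U_1^{q-2}(\cdot)^2$ is \emph{not} positive on the $\dot H^1$-orthogonal complement of the kernel of the linearization alone --- it is strictly negative on $U_1$ itself, which lies in that complement. Positivity requires orthogonality to $U_1$ \emph{and} to the $n$ translation/dilation modes; this is exactly what your $n{+}1$ conditions $\langle\sigma_\al,w_\al\rangle_{\hat g}=0$ and $\langle u,w_\al\rangle_{\hat g}=0$ for $u\in E_\al$ provide, since the rescaled $\sigma_\al$ converges to $U_1$ (not to a null mode) while the rescaled $E_\al$ converges to $\mathrm{span}\{\pa_{\xi_i}U_{\xi,1}|_{\xi=0},\,\pa_\lda U_{0,\lda}|_{\lda=1}\}$. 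With that adjustment your outline matches the paper's Lemma~\ref{lem:cowhole} and the argument goes through.
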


We postpone the proof of Proposition \ref{prop:energy est 1} to the end of this section, and use it to prove Theorem \ref{thm:Tr} first.

Let
\[
Y(g,u_\al)= \frac{\int_{M}|\nabla_g u_\al|^2\,\ud v_g +\frac{n-2}{2}\int_{\pa M}h_g u_\al^2\,\ud s_g}{\Big(\int_{\pa M}u^q_\al\,\ud s_g\Big)^{2/q}}.
\]
It follows from Proposition \ref{prop:uniform estimates} that
\[
Y(g,u_\al)= \frac{\int_{B_{\delta_\al}^+}|\nabla_g u_\al|^2\,\ud v_g +\frac{n-2}{2}\int_{\pa' {B_{\delta_\al}^+}}h_g u_\al^2\,\ud s_g}{\Big(
\int_{\pa' {B_{\delta_\al}^+}}u^q_\al\,\ud s_g\Big)^{2/q}}+O(\mu_\al^{n-2}).
\]
Since
\[
\begin{cases}
\Delta_g u_\al-c(n)R_gu_\al&=\varphi_{\al}^{\frac{n+2}{n-2}}(\Delta_{\hat{g}} \frac{u_\al}{\varphi_\al}-c(n)R_{\hat{g}} \frac{u_\al}{\varphi_\al})\quad \mbox{on }M, \\
\frac{\pa_g u_\al}{\pa \nu}+\frac{n-2}{2}h_gu_\al&=\varphi_{\al}^{\frac{n}{n-2}}(\frac{\pa_{\hat{g}}}{\pa \nu}\frac{u_\al}{\varphi_\al}
+\frac{n-2}{2}h_{\hat{g}} \frac{u_\al}{\varphi_\al} ) \quad \mbox{on }\pa M,
\end{cases}
\]
and $h_{\hat{g}}=0$ on $\pa' {B_{\delta_\al}^+}$, we have
\[
Y(g,u_\al)= \frac{\int_{B_{\delta_\al}^+}|\nabla_{\hat{g}} (\frac{u_\al}{\varphi_\al} )|^2\,\ud v_{\hat{g}}}{\Big(
\int_{\pa' {B_{\delta_\al}^+}}(\frac{u_\al}{\varphi_\al})^q\,\ud s_{\hat{g}}\Big)^{2/q}}+O(\mu_\al^{n-2}).
\]
In view of $u_\al/\varphi_\al=t_\al U_{\xi_\al,\lda_\al} -t_\al h_\al+\chi_\al+w_\al$,
\[
\int_{B^+_{\delta_\al}}\nabla_{\hat{g}}\chi_\al \nabla_{\hat{g}} w_\al\,\ud v_{\hat{g}}= \int_{B^+_{\delta_\al}}\nabla_{\hat{g}}h_\al \nabla_{\hat{g}} w_\al\,\ud v_{\hat{g}}=0,
\]
and the estimates in Lemma \ref{lem:small properties}, we have
\be\label{Y reduce}
Y(g,u_\al)=F(w_\al)+O(\mu_\al^{n-2}),
\ee
where
\[
F(w):= \frac{\int_{B_{\delta_\al}^+}|\nabla_{\hat{g}} (t_\al U_{\xi_\al,\lda_\al}+w)|^2\,\ud v_{\hat{g}}}{\Big(
\int_{\pa' {B_{\delta_\al}^+}}|t_\al U_{\xi_\al,\lda_\al}+w|^q\,\ud s_{\hat{g}}\Big)^{2/q}}.
\]
By a direct computation,
\[
\begin{split}
F'(0)w_\al=&\frac{2}{\Big(\int_{\pa' B_{\delta_\al}^+}|t_\al U_{\xi_\al,\lda_\al}|^q\,\ud s_{\hat{g}}\Big)^{2/q}}\Big\{
\int_{B_{\delta_\al}^+}t_\al\nabla_{\hat{g}}U_{\xi_\al,\lda_\al} \nabla_{\hat{g}} w_\al\,\ud v_{\hat{g}} \\& -
\frac{\int_{B_{\delta_\al}^+}|t_\al\nabla_{\hat{g}}U_{\xi_\al,\lda_\al}|^2\,\ud v_{\hat{g}}}{\int_{\pa' B_{\delta_\al}^+}|t_\al U_{\xi_\al,\lda_\al}|^q\,\ud s_{\hat{g}}} \int_{\pa' B_{\delta_\al}^+}|t_\al U_{\xi_\al,\lda_\al}|^{q-1}w_\al\,\ud s_{\hat{g}} \Big\}.
\end{split}
\]
Recall that $\sigma_\al=U_{\xi_\al,\lda_\al}-h_\al$ and $\int_{B_{\delta_\al}^+}\nabla_{\hat{g}} \sigma_\al \nabla_{\hat{g}} w_\al=\int_{B_{\delta_\al}^+}\nabla_{\hat{g}} h_\al \nabla_{\hat{g}} w_\al=0$. It follows that $\int_{B_{\delta_\al}^+}\nabla_{\hat{g}} U_{\xi_\al,\lda_\al} \nabla_{\hat{g}} w_\al=0$. Hence, (recall that we are in the Fermi coordinates of $\hat g$),
\[
\begin{split}
|F'(0)w_\al|& \leq C \Big|\int_{\pa' B_{\delta_\al}^+}| U_{\xi_\al,\lda_\al}|^{q-1}w_\al\,\ud s_{\hat{g}} \Big|
=C S\Big|\int_{\pa' B_{\delta_\al}^+}\frac{\pa_{\hat{g}}}{\pa \nu} U_{\xi_\al,\lda_\al} w_\al\,\ud s_{\hat{g}} \Big|\\&
=CS \Big|\int_{B_{\delta_\al}^+} \Delta_{\hat{g}}U_{\xi_\al,\lda_\al} w_\al\,\ud v_{\hat{g}} \Big|\\&
\leq C \|\Delta_{\hat{g}}U_{\xi_\al,\lda_\al}\|_{L^{2^{*'}}(B_{\delta_\al}^+)}\|w_\al\|,
\end{split}
\]
where $2^*=\frac{2n}{n-2}$ and $2^{*'}=\frac{2n}{n+2}$. 
By \eqref{lap bound} we have
\[
|F'(0)w_\al| \leq C\mu_\al  \|U_1^{\frac{n-1}{n-2}}\|_{L^{2^{*'}}(B^+_{\mu_\al^{-1}})} \|w_\al\|.
\]
Similarly,
\[
\begin{split}
\langle F''(0)w_\al, w_\al\rangle =&\frac{2}{\Big(\int_{\pa' B_{\delta_\al}^+}|t_\al U_{\xi_\al,\lda_\al}|^q\,\ud s_{\hat{g}}\Big)^{2/q}}\Big\{
\int_{B_{\delta_\al}^+}|\nabla_{\hat{g}} w_\al|^2\,\ud v_{\hat{g}} \\& -(q-1)
\frac{\int_{B_{\delta_\al}^+}|t_\al\nabla_{\hat{g}}U_{\xi_\al,\lda_\al}|^2\,\ud v_{\hat{g}}}{\int_{\pa' B_{\delta_\al}^+}|t_\al U_{\xi_\al,\lda_\al}|^q\,\ud s_{\hat{g}}} \int_{\pa' B_{\delta_\al}^+}|t_\al U_{\xi_\al,\lda_\al}|^{q-2}w_\al^2\,\ud s_{\hat{g}} \Big\}\\&
+O\left(\Big( \int_{\pa' B_{\delta_\al}^+}| U_{\xi_\al,\lda_\al}|^{q-1}w_\al\,\ud s_{\hat{g}}\Big)^2\right).
\end{split}
\]
By Lemma \ref{lem:comanifolds}, we have
\[
\begin{split}
&\int_{B_{\delta_\al}^+}|\nabla_{\hat{g}} w_\al|^2\,\ud v_{\hat{g}} -
\frac{(q-1)\int_{B_{\delta_\al}^+}|t_\al\nabla_{\hat{g}}U_{\xi_\al,\lda_\al}|^2\,\ud v_{\hat{g}}}{\int_{\pa' B_{\delta_\al}^+}|t_\al U_{\xi_\al,\lda_\al}|^q\,\ud s_{\hat{g}}} \int_{\pa' B_{\delta_\al}^+}|t_\al U_{\xi_\al,\lda_\al}|^{q-2}w_\al^2\,\ud s_{\hat{g}}\\
&\quad\quad\geq \frac{c_1}{2}\|w_\al\|^2,
\end{split}
\]
for large $\al$. It follows that
\[
\langle F''(0)w_\al, w_\al\rangle\geq \frac{c_1}{2}\|w_\al\|^2+O(\mu_\al^2) \|U_1^{\frac{n-1}{n-2}}\|^2_{L^{2^{*'}}(B^+_{\mu_\al^{-1}})} \|w_\al\|^2.
\]
Noticing that $\mu_\al \|U_1^{\frac{n-1}{n-2}}\|^2_{L^{2^{*'}}(B^+_{\mu_\al^{-1}})}\to 0$ as $\al \to \infty$, we have
\[
\begin{split}
F(w_\al)&= F(0)+F'(0)w_\al+\frac{1}{2}\langle F''(0)w_\al, w_\al\rangle+o(\|w_\al\|^2)\\&
\geq F(0)+O\Big(\mu_\al  \|U_1^{\frac{n-1}{n-2}}\|_{L^{2^{*'}}(B^+_{\mu_\al^{-1}})} \|w_\al\|\Big).
\end{split}
\]
By \eqref{Y reduce}, we conclude that
\be \label{lower bounde of Y}
Y(g, u_\al)\geq \frac{\int_{B_{\delta_\al}^+}|\nabla_{\hat{g}} U_{\xi_\al,\lda_\al}|^2\,\ud v_{\hat{g}}}{ \Big(
\int_{\pa' {B_{\delta_\al}^+}} U_{\xi_\al,\lda_\al}^q\,\ud s_{\hat{g}}\Big)^{2/q}}+O\Big(\mu_\al  \|U_1^{\frac{n-1}{n-2}}\|_{L^{2^{*'}}(B^+_{\mu_\al^{-1}})} \|w_\al\|+\mu_\al^{n-2}\Big).
\ee

\begin{lem} \label{lem:buble check} We have
\[
\frac{\int_{B_{\delta_\al}^+}|\nabla_{\hat{g}} U_{\xi_\al,\lda_\al}|^2\,\ud v_{\hat{g}}}{ \Big(
\int_{\pa' {B_{\delta_\al}^+}} U_{\xi_\al,\lda_\al}^q\,\ud s_{\hat{g}}\Big)^{2/q}}=\begin{cases}
\frac{1}{S} +O(\mu_\al^2),& \quad n\geq 5,\\[4mm]
\frac{1}{S} +O(\mu_\al^2\log \mu_\al^{-1}), &\quad n=4.
\end{cases}
\]
\end{lem}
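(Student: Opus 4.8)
The plan is to compute directly, in the Fermi coordinates of $\hat g$ centered at $x_\al$, both the numerator $\int_{B_{\delta_\al}^+}|\nabla_{\hat g}U_{\xi_\al,\lda_\al}|^2\,\ud v_{\hat g}$ and the denominator $\big(\int_{\pa'B_{\delta_\al}^+}U_{\xi_\al,\lda_\al}^q\,\ud s_{\hat g}\big)^{2/q}$, and then compare with the flat Euclidean quotient, which equals $1/S$ exactly by \eqref{eq:eucl1}--\eqref{bub}. The key geometric input is Lemma 2.5: in these coordinates $g^{ij}=\delta^{ij}+2h^{ij}(x',0)x_n+O(|x|^2)$ and $g^{ij}\Gamma_{ij}^k=O(|x|)$, and moreover, because we have conformally changed to $\hat g$ so that $h_{\hat g}=0$ on $\pa'B_{\delta_0}^+$, the linear term $2h^{ij}x_n$ \emph{vanishes upon taking the trace} $\sum_i h^{ii}=(n-1)h_{\hat g}=0$; this is precisely what kills the $O(\mu_\al)$ contributions and upgrades the error from $O(\mu_\al)$ to $O(\mu_\al^2)$. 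First I would expand $\sqrt{\det g}=1+O(|x|^2)$ and $g^{ij}=\delta^{ij}+a^{ij}(x)$ with $a^{ij}$ having zero trace at first order, write
\[
\int_{B_{\delta_\al}^+}|\nabla_{\hat g}U_{\xi_\al,\lda_\al}|^2\,\ud v_{\hat g}
=\int_{B_{\delta_\al}^+}|\nabla U_{\xi_\al,\lda_\al}|^2\,\ud x+\int_{B_{\delta_\al}^+}\big(a^{ij}\pa_iU_{\xi_\al,\lda_\al}\pa_jU_{\xi_\al,\lda_\al}+O(|x|^2)|\nabla U_{\xi_\al,\lda_\al}|^2\big)\,\ud x,
\]
and similarly for the boundary integral with $\ud s_{\hat g}=(1+O(|x'|^2))\,\ud x'$ on $\pa'B^+$.

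Next I would estimate each error term by scaling. After the change of variables $x=\lda_\al y$ (with $\lda_\al\sim\mu_\al$ by part 6) of Lemma 3.2, and using $|\xi_\al|=o(\mu_\al)$ by part 5) so that $U_{\xi_\al,\lda_\al}$ is, up to negligible error, the rescaled standard bubble $\lda_\al^{-(n-2)/2}U(y)$), the tail integrals become
\[
\int_{B_{\delta_\al}^+}|x|^2|\nabla U_{\xi_\al,\lda_\al}|^2\,\ud x=\lda_\al^2\int_{B^+_{\delta_\al/\lda_\al}}|y|^2|\nabla U(y)|^2\,\ud y,
\]
and since $|\nabla U|^2\sim |y|^{-2(n-1)}$ at infinity, $\int_{B_R^+}|y|^2|\nabla U|^2$ behaves like $\int^R r^2\cdot r^{-2(n-1)}\cdot r^{n-1}\,\ud r=\int^R r^{-(n-2)}\,\ud r$, which converges for $n\ge5$ and is $O(\log R)=O(\log\mu_\al^{-1})$ for $n=4$; this is exactly the dichotomy in the statement. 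The crucial point is that the would-be dominant term $\int a^{ij}\pa_iU\pa_jU$, which scales like $\lda_\al\int |y|\,|\nabla U|^2$ and hence like $\mu_\al$, \emph{integrates to zero at leading order}: because $a^{ij}(x)=2h^{ij}(x',0)x_n+O(|x|^2)$, the leading piece is $2h^{ij}(0)\int x_n\pa_iU\pa_jU\,\ud x$, and by symmetry of $\pa_iU\pa_jU$ in $y'$ the off-diagonal terms vanish, leaving $2\sum_i h^{ii}(0)\int x_n(\pa_iU)^2$; but $\sum_i h^{ii}(0)=(n-1)h_{\hat g}(x_\al)=0$. The remaining piece of $a^{ij}$ is genuinely $O(|x|^2)$, so it contributes at the same $\mu_\al^2$ (resp. $\mu_\al^2\log$) order as the volume-form and boundary corrections. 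Finally I would Taylor-expand the quotient: numerator $=1/S+O(\mu_\al^2)$, denominator's $2/q$ power $=1+O(\mu_\al^2)$ (using that $\int_{\pa'B}U^q$ over the whole of $\R^{n-1}$ equals the flat value and the correction is $O(\mu_\al^2)$ plus a truncation tail $\int_{|y'|>\delta_\al/\lda_\al}U^q\sim(\mu_\al/\delta_\al)^{n-1}$ which is even smaller), giving the claimed expansion.

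The main obstacle I anticipate is bookkeeping the interplay between the three small parameters $t_\al,\xi_\al,\lda_\al$ and the truncation radius $\delta_\al$: one must make sure that replacing $U_{\xi_\al,\lda_\al}$ by the centered bubble $U_{0,\lda_\al}$, and then extending all integrals from $B^+_{\delta_\al/\lda_\al}$ to $\R^n_+$, costs only $O(\mu_\al^2)$ (or the $n=4$ logarithmic analogue) and never swamps the main term; the $|\xi_\al|=o(\mu_\al)$ estimate from Lemma 3.2 must be fed in carefully, and for the boundary integral in dimension $n=4$ one should double-check that the $\log$ appears only from the gradient energy and not from the $L^q$ boundary norm (which is a convergent integral). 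A secondary technical point is verifying that the $O(|x|^2)$ remainder in $g^{ij}$ really does scale like $\lda_\al^2\int|y|^2|\nabla U|^2$ and not worse near the corner $\pa'B^+\cap\pa''B^+$; this is handled by the $L^\infty$ and energy bounds already available and the decay of $U$. Everything else is a routine Taylor expansion once the zero-trace cancellation is isolated.
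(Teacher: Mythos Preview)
Your proposal is correct and follows essentially the same route as the paper's proof: expand the metric in Fermi coordinates via Lemma~\ref{taylor expansion of fermi}, use the symmetry of $\partial_iU\partial_jU$ together with the vanishing of $\sum_i\hat h^{ii}(0)=(n-1)h_{\hat g}(x_\al)=0$ to kill the $O(\mu_\al)$ cross term, and control all remaining errors by the $O(|z|^2)$ scaling $\lambda_\al^2\int|y|^2|\nabla U|^2$, which gives the $n\ge5$ versus $n=4$ dichotomy. The paper handles the off-center bubble by shifting the domain to $B^+_{\delta_\al/2}(\xi_\al)$ (incurring an $O(\mu_\al^{n-2})$ tail) rather than replacing $U_{\xi_\al,\lambda_\al}$ by $U_{0,\lambda_\al}$, but this is a cosmetic difference.
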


\begin{proof} Let $\pi=\hat{h}_{ij} \ud z^i\ud z^j$, $1\leq i,j\leq n-1$,
be the second fundamental form of $ \pa' B_{\delta_\al}^+$ with respect to the metric $\hat{g}_{ij}$.
Since %$\sqrt{\det \hat{g}_{ij}}=1-\frac{1}{2}\|\pi\|^2z_n^2+ O(|z|^3)$,
$\sqrt{\det \hat{g}_{ij}}=1+ O(|z|^2)$ on $\pa' {B_{\delta_\al}^+}$, we have
\[
\begin{split}
\Big(\int_{\pa' {B_{\delta_\al}^+}} U_{\xi_\al,\lda_\al}^q\,\ud s_{\hat{g}}\Big)^{2/q}&=\Big(\int_{\pa' {B_{\delta_\al}^+}} U_{\xi_\al,\lda_\al}^q
(1+O(|z'|^2))\,\ud z\Big)^{2/q}\\&
= \Big(\int_{\pa' {B_{\delta_\al}^+}} U_{\xi_\al,\lda_\al}^q\,\ud z\Big)^{2/q}+O(\int_{\pa' {B_{\delta_\al}^+}} U_{\xi_\al,\lda_\al}^q|z'|^2\,\ud z)\\&
= \Big(\int_{\pa \R^{n}_+} U_{\xi_\al,\lda_\al}^q\,\ud z\Big)^{2/q}+O(\lda^{n-2}_\al)\\&
\quad +O(\lda_\al^{2}\int_{\pa' {B_{\lda_\al^{-1}}^+}} (1+|z'|^2)^{1-n}|z'|^2\,\ud z)\\&
=1+O(\lda^{n-2}_\al)+O(\lda_\al^{2}\int_{\pa' {B_{\lda_\al^{-1}}^+}} (1+|z'|^2)^{1-n}|z'|^2\,\ud z).
\end{split}
\]
where we used $\lda^{-1}_\al|\xi_\al|\to 0$ as $\al \to \infty$ in the last equality.

In addition, by Lemma \ref{taylor expansion of fermi}, we have
\[
\begin{split}
\int_{B_{\delta_\al}^+}|\nabla_{\hat{g}} U_{\xi_\al,\lda_\al}|^2\,\ud v_{\hat{g}}&=\int_{B_{\delta_\al}^+}|\nabla U_{\xi_\al,\lda_\al}|^2\,\ud z
+2\hat{h}^{ij}(0) \int_{B_{\delta_\al}^+}  \pa_iU_{\xi_\al,\lda_\al}\pa_jU_{\xi_\al,\lda_\al} z_n \,\ud z\\&
\quad +O(\int_{B_{\delta_\al}^+}|\nabla U_{\xi_\al,\lda_\al}|^2|z|^2\,\ud z).
\end{split}
\]
It is easy to see that
\[
\int_{B_{\delta_\al}^+}|\nabla U_{\xi_\al,\lda_\al}|^2\,\ud z=\int_{\R^n_+}|\nabla U_{\xi_\al,\lda_\al}|^2\,\ud z+O(\lda_\al^{n-2})
 =\frac1S+O(\lda_\al^{n-2})
\]
and
\[
\begin{split}
\int_{B_{\delta_\al}^+}&|\nabla U_{\xi_\al,\lda_\al}|^2|z|^2\,\ud z \\&=C(n)\lda^2_\al\int_{B_{\lda_\al^{-1}}^+}(|z'|^2+(z_n+1)^2)^{1-n}|z+
\lda^{-1}_\al \xi_\al|^2\,\ud z+O(\lda_\al^{n-2}).
\end{split}
\]
By symmetry, we have
\[
\begin{split}
&\sum_{i,j=1}^{n-1}\hat{h}^{ij}(0) \int_{B_{\delta_\al}^+} \pa_iU_{\xi_\al,\lda_\al}\pa_jU_{\xi_\al,\lda_\al} z_n \,\ud z\\
&=\sum_{i,j=1}^{n-1}\hat{h}^{ij}(0) \int_{B_{\delta_\al/2}^+(\xi_\al)}  \pa_iU_{\xi_\al,\lda_\al}\pa_jU_{\xi_\al,\lda_\al} z_n \,\ud z+O(\mu_\al^{n-2})\\
&=\sum_{i=1}^{n-1}\hat{h}^{ii}(0)\int_{B_{\delta_\al/2}^+(\xi_\al)}|\pa_1U_{\xi_\al,\lda_\al}|^2 z_n \,\ud z +O(\mu_\al^{n-2})
=O(\mu_\al^{n-2}),
\end{split}
\]
where we used $\sum_{i=1}^{n-1}\hat{h}^{ii}(0)=0$ since the mean curvature of $\hat g$ is vanishing at $x_\al$. The lemma follows immediately from Lemma \ref{lem:small properties}.
\end{proof}

\begin{proof}[Proof of Theorem \ref{thm:Tr}] Notice that
\[
\frac{1}{S}>I_\al(u_\al)=Y(g,u_\al)+\al\|u_\al\|^2_{L^r(\pa M)}.
\]
By \eqref{lower bounde of Y} and Lemma \ref{lem:buble check}, we have
\[
\al\|u_\al\|^2_{L^r(\pa M)} =O(\mu_\al^2) +O\Big(\mu_\al  \|U_1^{\frac{n-1}{n-2}}\|_{L^{2^{*'}}(B^+_{\mu_\al^{-1}})} \|w_\al\|+\mu_\al^{n-2}\Big).
\]
By Proposition \ref{prop:energy est 1}, we find
\be\label{eq:final contra}
\begin{split}
\al\|u_\al\|^2_{L^r(\pa M)} &\leq C\Big\{\mu_\al  \|U_1^{\frac{n-1}{n-2}}\|_{L^{2^{*'}}(B^+_{\mu_\al^{-1}})} \Big(\mu_\al\|U^{\frac{n-1}{n-2}}_1\|_{L^{2^{*'}}(B^+_{\mu_\al^{-1}})}\\&
+\ep_\al \|U_1^{r-1}\|_{L^r(\pa' B^+_{\mu_\al^{-1}})}+\mu_\al^{n-2}\|U^{q-2}_1\|_{L^r(\pa' B^+_{\mu_\al^{-1}})}\Big)+\mu_\al^2\Big\}.
\end{split}
\ee
Due to $n\geq 5$, we have
\[
\begin{split}
\|U_1^{\frac{n-1}{n-2}}\|_{L^{2^{*'}}(B^+_{\mu_\al^{-1}})}  \leq &~ C\\
\mu_\al\|U_1^{r-1}\|_{L^r(\pa' B^+_{\mu_\al^{-1}})}\leq &~ C\mu_\al(1+\mu_\al^{\frac{n^2-8n+8}{2n}}) =o(1)\\
\|U^{q-2}_1\|_{L^r(\pa' B^+_{\mu_\al^{-1}})} \leq &~ C(1+\mu_\al^{(4-n)/2}).
\end{split}
\]
From \eqref{etimate for ep}, i.e., $\ep_\al\leq \al\|u_\al\|^2_{L^r(\pa M)}$,  it follows that
\[
\al\|u_\al\|^2_{L^r(\pa M)} \leq C \mu_\al^2.
\]
On the other hand,
\[
\|u_\al\|_{L^r(\pa M)}\geq \|u_\al\|_{L^r(B^+_{\mu_\al}(x_\al)\cap \pa M)}\geq C^{-1}\mu_\al \|v_\al\|_{L^r(\pa'  B^+_{1})}\geq C^{-1} \mu_\al,
\]
where $v_\al$ is as in \eqref{eq 0}. Hence
\[
\al \leq C.
\]
This is a contradiction.
\end{proof}

\subsection{Proof of Proposition \ref{prop:energy est 1}}

The rest of the paper is devoted to the proof of Proposition \ref{prop:energy est 1}. We start with the equation which $w_\al$ satisfies.

\begin{lem}
 \label{lem: w equ}
$w_\al$ satisfies
\be\label{w equ}
\begin{cases}
 -\Delta_{\hat{g}}w_\al=t_\al \Delta_{\hat{g}}U_{\xi_\al, \lda_\al},& \quad \mbox{in }B^+_{\delta_\al}, \\[2mm]
\frac{\pa_{\hat{g} w}}{\pa \nu}-\ell_\al(q-1)|\Theta_{\al}|^{q-3}\Theta_\al w_\al +b'|\Theta_\al|^{q-3}w_\al^2+b''|w_\al|^{q-1}=f_\al, &\quad \mbox{on }\pa'B^+_{\delta_\al},
\end{cases}
\ee
where
\[
 \begin{split}
  &\Theta_\al=t_\al \sigma_\al+\chi_\al \\
&f_\al=t_\al(\ell_\al t^{q-2}_\al-\frac{1}{S})U_{\xi_\al,\lda_\al}^{q-1}- \al\|u_\al\|^{2-r}_{L^r(\pa M)}\psi_\al^{1-q}u_\al^{r-1}+O(\mu_\al^{(n-2)/2}) U_{\xi_\al,\lda_\al}^{q-2},
 \end{split}
\]
and $b', b''$ are bounded functions with $b'\equiv 0$ if $n\geq 4$.
\end{lem}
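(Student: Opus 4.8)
The plan is to substitute into $w_\al=u_\al/\psi_\al-\chi_\al-t_\al\sigma_\al$ the defining equations of its three constituents and then Taylor-expand the critical boundary nonlinearity about $\Theta_\al:=t_\al\sigma_\al+\chi_\al$. The interior equation is immediate by linearity: $\Delta_{\hat{g}}(u_\al/\psi_\al)=0$ and $\Delta_{\hat{g}}\chi_\al=0$ in $B^+_{\delta_\al}$, while $\Delta_{\hat{g}}\sigma_\al=\Delta_{\hat{g}}U_{\xi_\al,\lda_\al}$ by construction, so $-\Delta_{\hat{g}}w_\al=t_\al\Delta_{\hat{g}}U_{\xi_\al,\lda_\al}$ in $B^+_{\delta_\al}$.

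For the boundary condition on $\pa'B^+_{\delta_\al}$ I would add the conormal derivatives, working in the Fermi coordinates of $\hat{g}$, where $\hat{g}=\ud x_n^2+\hat{g}_{ij}\ud x_i\ud x_j$, so that $\pa_{\hat{g}}/\pa\nu=-\partial_{x_n}$ on $\{x_n=0\}$. By \eqref{quotient eq} the first piece contributes $\ell_\al(u_\al/\psi_\al)^{q-1}-\al\|u_\al\|^{2-r}_{L^r(\pa M)}\psi_\al^{1-q}u_\al^{r-1}$, the function $\chi_\al$ contributes $0$, and $t_\al\sigma_\al$ contributes $t_\al\,\pa_{\hat{g}}\sigma_\al/\pa\nu=t_\al\,\pa_{\hat{g}}U_{\xi_\al,\lda_\al}/\pa\nu=\tfrac{t_\al}{S}U_{\xi_\al,\lda_\al}^{q-1}$, the last identity being the scale invariance of the flat bubble equation $-\partial_{y_n}U=\tfrac1S U^{q-1}$ of \eqref{limt equ}. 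This gives
\[
\frac{\pa_{\hat{g}}w_\al}{\pa\nu}=\ell_\al\Big(\tfrac{u_\al}{\psi_\al}\Big)^{q-1}-\al\|u_\al\|^{2-r}_{L^r(\pa M)}\psi_\al^{1-q}u_\al^{r-1}-\tfrac{t_\al}{S}U_{\xi_\al,\lda_\al}^{q-1}\qquad\mbox{on }\pa'B^+_{\delta_\al}.
\]

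Next, since $u_\al/\psi_\al=\Theta_\al+w_\al\ge0$ and $q-1=n/(n-2)\in(1,2]$ for $n\ge4$, the map $t\mapsto|t|^{q-1}$ is $C^1$ with continuous derivative $(q-1)|t|^{q-3}t$, so I would write
\[
\Big(\tfrac{u_\al}{\psi_\al}\Big)^{q-1}=|\Theta_\al|^{q-1}+(q-1)|\Theta_\al|^{q-3}\Theta_\al\,w_\al+b''|w_\al|^{q-1},
\]
where, by homogeneity of the Taylor remainder (which reduces its boundedness to a one-variable estimate that is continuous and vanishes at infinity), $b''$ is bounded and no $|\Theta_\al|^{q-3}w_\al^2$ term is needed when $n\ge4$, i.e. $b'\equiv0$; for $n=3$ the cubic does produce such a term. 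Inserting this into the boundary identity and transferring the $w_\al$-dependent terms to the left gives the asserted equation with $f_\al=\ell_\al|\Theta_\al|^{q-1}-\al\|u_\al\|^{2-r}_{L^r(\pa M)}\psi_\al^{1-q}u_\al^{r-1}-\tfrac{t_\al}{S}U_{\xi_\al,\lda_\al}^{q-1}$.

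Finally I would bring $f_\al$ into the stated form using Lemma \ref{lem:small properties}: from $\|h_\al\|_{L^\infty}+\|\chi_\al\|_{L^\infty}\le C\mu_\al^{(n-2)/2}$ and $t_\al\in[\tfrac12,\tfrac32]$ one has $\Theta_\al=t_\al U_{\xi_\al,\lda_\al}+O(\mu_\al^{(n-2)/2})$, while since $\lda_\al\in[\tfrac{\mu_\al}{2},\tfrac{3\mu_\al}{2}]$, $|\xi_\al|=o(\mu_\al)$ and $\delta_\al$ is bounded, a direct estimate of the explicit bubble gives $U_{\xi_\al,\lda_\al}\ge c\mu_\al^{(n-2)/2}$ on $\pa'B^+_{\delta_\al}$, hence $\mu_\al^{(n-2)/2}\le C\,U_{\xi_\al,\lda_\al}^{q-2}$ there (recall $q-2=2/(n-2)$). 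Applying the mean value theorem to $|\cdot|^{q-1}$ once more yields $|\Theta_\al|^{q-1}=t_\al^{q-1}U_{\xi_\al,\lda_\al}^{q-1}+O(\mu_\al^{(n-2)/2})U_{\xi_\al,\lda_\al}^{q-2}$, and regrouping via $\ell_\al t_\al^{q-1}-t_\al/S=t_\al(\ell_\al t_\al^{q-2}-1/S)$ produces
\[
f_\al=t_\al\big(\ell_\al t_\al^{q-2}-\tfrac1S\big)U_{\xi_\al,\lda_\al}^{q-1}-\al\|u_\al\|^{2-r}_{L^r(\pa M)}\psi_\al^{1-q}u_\al^{r-1}+O(\mu_\al^{(n-2)/2})U_{\xi_\al,\lda_\al}^{q-2}.
\]
The step demanding the most care is this expansion of the critical nonlinearity: one must track the merely Hölder (not Lipschitz) behavior of $|t|^{q-1}$ when $n\ge5$, confirm the Taylor remainder has the form $b''|w_\al|^{q-1}$ with $b''$ bounded, and --- this is where the fixed-scale lower bound $U_{\xi_\al,\lda_\al}\ge c\mu_\al^{(n-2)/2}$ on $\pa'B^+_{\delta_\al}$ is indispensable --- verify that the discrepancy between $|\Theta_\al|^{q-1}$ and $t_\al^{q-1}U_{\xi_\al,\lda_\al}^{q-1}$ is genuinely absorbed into $O(\mu_\al^{(n-2)/2})U_{\xi_\al,\lda_\al}^{q-2}$.
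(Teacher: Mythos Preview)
Your argument is correct and follows essentially the same route as the paper: derive the interior equation by linearity, compute the conormal derivative from \eqref{quotient eq} together with $\pa_{\hat g}\sigma_\al/\pa\nu=\pa_{\hat g}U_{\xi_\al,\lda_\al}/\pa\nu=\tfrac1S U_{\xi_\al,\lda_\al}^{q-1}$, Taylor-expand $(\Theta_\al+w_\al)^{q-1}$ about $\Theta_\al$, and finally expand $|\Theta_\al|^{q-2}\Theta_\al$ about $t_\al U_{\xi_\al,\lda_\al}$ using Lemma~\ref{lem:small properties}. Your explicit isolation of the lower bound $U_{\xi_\al,\lda_\al}\ge c\,\mu_\al^{(n-2)/2}$ on $\pa'B^+_{\delta_\al}$ is a clarifying touch; the paper uses it implicitly when bounding the remainder. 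One small point: since $\Theta_\al=t_\al U_{\xi_\al,\lda_\al}-t_\al h_\al+\chi_\al$ need not be nonnegative (the $L^\infty$ bounds on $h_\al,\chi_\al$ and the lower bound on $U_{\xi_\al,\lda_\al}$ are of the same order), the leading term in the expansion should be written as $|\Theta_\al|^{q-2}\Theta_\al$ rather than $|\Theta_\al|^{q-1}$, matching the odd extension in the statement; your subsequent mean-value estimate then goes through unchanged via the case split $|e|\le \tfrac12 t_\al U_{\xi_\al,\lda_\al}$ versus $|e|>\tfrac12 t_\al U_{\xi_\al,\lda_\al}$.
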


\begin{proof}
The proof follows from straightforward computations. First of all, by the definition of $w_\al$ and by the equation \eqref{quotient eq} of $u_\al/\psi_\al$, we  have
\[
 -\Delta_{\hat{g}} w_\al=-\Delta_{\hat{g}}(\frac{u_\al}{\psi_\al}-\chi_\al-t_\al\sigma_\al)=t_\al \Delta_{\hat{g}}\sigma_\al= t_\al \Delta_{\hat{g}}U_{\xi_\al,\lda_\al} \quad
\mbox{in }B_{\delta_\al}^+,
\]
 and
\[
 \begin{split}
  \frac{\pa_{\hat{g}}w_\al}{\pa \nu}&=\ell_\al(\Theta_\al +w_\al)^{q-1}-\al\|u_\al\|^{2-r}_{L^r(\pa M)}\psi_\al^{1-q}u_\al^{r-1}-t_\al \frac{\pa_{\hat{g}}\sigma_\al}{\pa \nu}\\
 &=\ell_\al(\Theta_\al +w_\al)^{q-1}-\al\|u_\al\|^{2-r}_{L^r(\pa M)}\psi_\al^{1-q}u_\al^{r-1}-t_\al \frac{\pa_{\hat{g}} U_{\xi_\al,\lda_\al}}{\pa \nu}\\
&=\ell_\al (\Theta_\al +w_\al)^{q-1}-\al\|u_\al\|^{2-r}_{L^r(\pa M)}\psi_\al^{1-q}u_\al^{r-1}-\frac{t_\al}{S} U_{\xi_\al,\lda_\al}^{q-1},
 \end{split}
\]
where we used that $\frac{\pa_{\hat{g}} U_{\xi_\al,\lda_\al}}{\pa \nu}=\frac{\pa U_{\xi_\al,\lda_\al}}{\pa x_n}=U_{\xi_\al,\lda_\al}^{q-1}$ in Fermi coordinate systems.
Note that we have the following elementary expansion
\[
 (k+l)^{q-1}=|k|^{q-2}k+(q-1)|k|^{q-3}kl+b'(k,l)|k|^{q-3}l^2+b''(k,l)|l|^{q-1},
\]
for all $k,l$ such that $k+l>0$, where $b',b''$ are bounded and $b'\equiv 0$ if $n\geq 4$.
For $k=\Theta_\al$, $l=w_\al$, we obtain
\[
\begin{split}
 (\Theta_\al+w_\al)^{q-1}=&|\Theta_\al|^{q-2}\Theta_\al+(q-1)|\Theta_\al|^{q-3}\Theta_\al w_\al\\&
+b'|\Theta_\al|^{q-3}w_\al^2+b''|w_\al|^{q-1},
\end{split}
\]
And Lemma \ref{lem: w equ} follows from
\[
  \begin{split}
   |\Theta_\al|^{q-2}\Theta_\al&= |t_\al U_{\xi_\al,\lda_\al}-t_\al h_\al+\chi_\al|^{q-2}(t_\al U_{\xi_\al,\lda_\al}-t_\al h_\al+\chi_\al)\\&
=(t_\al U_{\xi_\al,\lda_\al })^{q-1}- (t_\al U_{\xi_\al,\lda_\al })^{q-2}(t_\al h_\al -\chi_\al)\\& \quad +
|t_\al U_{\xi_\al,\lda_\al}-\theta (t_\al h_\al+\chi_\al)|^{q-3}(t_\al h_\al+\chi_\al )(t_\al U_{\xi_\al,\lda_\al}-t_\al h_\al+\chi_\al)\\&
= (t_\al U_{\xi_\al,\lda_\al })^{q-1}+O(\mu_\al^{(n-2)/2}U_{\xi_\al,\lda_\al }^{q-2}),
  \end{split}
\]
where $\theta\in (0,1)$ and we have used $1)$ and $2)$ in Lemma \ref{lem:small properties}.
\end{proof}

Define
\[
 Q_\al(\varphi, \phi)=\int_{B^+_{\delta_\al}}\nabla_{\hat{g}}\varphi\nabla_{\hat{g}}\phi\,\ud v_{\hat{g}}-\ell_\al (q-1)\int_{\pa' B^+_{\delta_\al}}
|\Theta_\al|^{q-3}\Theta_\al \varphi\phi\,\ud s_{\hat{g}},
\]
for all $\varphi, \phi \in H_{0,L}(B^+_{\delta_\al})$.

\begin{lem} \label{lem:positive def1}
 There exist $0<\delta_0<<1$, $\al_0\geq 1$ and  $c_0>0$ independent of $\al$ such that
\[
 Q_\al(w,w)\geq c_0\int_{B^+_{\delta_\al}}|\nabla_{\hat{g}}w|^2\,\ud v_{\hat{g}}, \quad \forall ~ w\in W_\al, ~\al \geq \al_0.
\]
\end{lem}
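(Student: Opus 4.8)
The statement is a coercivity (second-variation positivity) estimate on the orthogonal complement $W_\al$ of the approximate kernel. The natural strategy is proof by contradiction combined with a blow-up/concentration-compactness analysis. So I would suppose the estimate fails: there is a sequence $\al_k\to\infty$ (and correspondingly $\delta_{\al_k}\to$ some fixed $\delta_0$, after rescaling we will send $\delta_0$ large) and functions $w_k\in W_{\al_k}$ with $\|w_k\|=1$ but $Q_{\al_k}(w_k,w_k)\to 0$ (or more precisely $Q_{\al_k}(w_k,w_k) < c_0\|w_k\|^2$ for every $c_0$, so normalizing $\|w_k\|=1$ forces $Q_{\al_k}(w_k,w_k)\le o(1)$). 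The first reduction is to pass from the Riemannian quadratic form to the flat one: since $\hat g_{ij}=\delta_{ij}+O(|z|)$ near $0$ in the Fermi coordinates and $\delta_0$ is small, and since $|\Theta_\al|^{q-3}\Theta_\al = (t_\al U_{\xi_\al,\lda_\al})^{q-2}+o(1)$ in the relevant $L^{n-1}$-type norm by parts $1)$ and $2)$ of Lemma \ref{lem:small properties} together with $t_\al\to 1$, $\mu_\al^{-1}\lda_\al\to1$, $\mu_\al^{-1}|\xi_\al|\to 0$, the form $Q_\al$ is, up to $o(1)\|w\|^2$, the flat form $\int_{B^+_{\delta_\al}}|\nabla w|^2 - (q-1)S^{-1}\int_{\pa'B^+_{\delta_\al}}U_{\xi_\al,\lda_\al}^{q-2}w^2$.

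**Rescaling and the limiting problem.** Next I would rescale: set $\tilde w_k(y) = \mu_{\al_k}^{(n-2)/2} w_k(\mu_{\al_k}y)$ (or rather rescale so that $U_{\xi_\al,\lda_\al}$ becomes the standard bubble $U=U_1$), which turns the domain $B^+_{\delta_{\al_k}}$ into $B^+_{\delta_{\al_k}/\mu_{\al_k}}$, exhausting $\R^n_+$, preserves the Dirichlet norm, and turns the weight into $U^{q-2}$. The sequence $\tilde w_k$ is bounded in $\dot H^1(\R^n_+)$, so after a subsequence $\tilde w_k\rightharpoonup w_\infty$ weakly; the weight $U^{q-2}$ decays like $|y|^{-(q-2)(n-2)}=|y|^{-2}$ on $\pa\R^n_+$, which is exactly the borderline decay making the boundary term a compact perturbation in a weighted sense, so one gets $\int_{\pa\R^n_+}U^{q-2}\tilde w_k^2 \to \int_{\pa\R^n_+}U^{q-2}w_\infty^2$ (using the trace embedding and Rellich on compact sets, plus the tail estimate from the decay of $U^{q-2}$). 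Passing to the limit, $w_\infty\in \dot H^1(\R^n_+)$ solves weakly $-\Delta w_\infty=0$ in $\R^n_+$, $-\pa_{y_n}w_\infty=(q-1)S^{-1}U^{q-2}w_\infty$ on $\pa\R^n_+$, i.e. $w_\infty$ lies in the kernel of the linearized operator at the bubble $U$. By the nondegeneracy of the bubble for this trace equation — the kernel is exactly spanned by the $n$ generators $\pa_{\xi}U$, $\pa_\lda U$ coming from translations along $\pa\R^n_+$ and dilations (this is the standard classification; it follows from the Liouville theorem of \cite{LZ95} or can be cited from the fractional-Laplacian linearization literature) — $w_\infty$ is a linear combination of these generators. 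But the orthogonality conditions defining $W_\al$, namely $\langle \sigma_{\xi_\al,\lda_\al},w\rangle_{\hat g}=0$ and $\langle u,w\rangle_{\hat g}=0$ for $u\in E_\al$, pass to the limit (using that $\sigma_{\xi_\al,\lda_\al}\to U$ and $E_\al\to \mathrm{span}\{\pa_\xi U,\pa_\lda U\}$ in $\dot H^1$, modulo the $O(\mu_\al^{(n-2)/2})$ corrections $h_\al$), forcing $w_\infty$ to be $\dot H^1$-orthogonal to all these generators and to $U$ itself; hence $w_\infty=0$.

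**Extracting the contradiction.** With $w_\infty=0$, the boundary term $\int_{\pa'B^+_{\delta_{\al_k}}}U^{q-2}_{\xi,\lda}w_k^2 \to 0$, so $Q_{\al_k}(w_k,w_k)=\|w_k\|^2 + o(1) = 1+o(1)$, contradicting $Q_{\al_k}(w_k,w_k)\to 0$. The remaining point is to make sure no Dirichlet energy of $w_k$ leaks to the boundary piece $\pa''B^+_{\delta_{\al_k}}$ or escapes to infinity in the rescaled picture in a way that survives in $Q$ but not detected by the weight: but since $w_k\in H_{0,L}$ vanishes on $\pa''B^+_{\delta_{\al_k}}$ and the only negative contribution to $Q$ is the weighted boundary integral which we have shown tends to $0$, we in fact get the \emph{lower} bound $Q_{\al_k}(w_k,w_k)\ge \|w_k\|^2 - o(1)\|w_k\|^2$ directly once $w_\infty=0$; concentration of energy away from the bubble only helps. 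I expect the main obstacle to be the two technical inputs: (i) proving the limiting equation holds and the boundary term passes to the limit, which requires a careful cutoff argument exploiting the $|y|^{-2}$ decay of $U^{q-2}$ on $\pa\R^n_+$ to control the region near infinity in the rescaled domain (the trace Sobolev/Hardy inequality on the half-space is the key tool), and (ii) citing or verifying the exact nondegeneracy statement for the bubble $U$ under this Neumann-type linearization, including that the orthogonality conditions of $W_\al$ kill the full kernel; once these are in place the contradiction is immediate. The choice of $\delta_0$ small enters only through the metric expansion $\hat g = \delta + O(|z|)$ making the Riemannian-to-flat reduction cost $o(1)\|w\|^2$, and $\al_0$ enters through all the convergences in Lemma \ref{lem:small properties}.
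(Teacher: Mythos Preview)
Your overall strategy (contradiction plus blow-up) is sound, and once $w_\infty=0$ is in hand the final contradiction is exactly as you say. The gap is in how you arrive at $w_\infty=0$. From $Q_{\al_k}(w_k,w_k)\to 0$ with $\|w_k\|=1$ you obtain, after rescaling and using weak lower semicontinuity of the Dirichlet integral together with the compactness of the weighted boundary term, only that $Q_1(w_\infty,w_\infty)\le 0$. This does \emph{not} imply that $w_\infty$ solves the linearized equation: for instance $U$ itself satisfies $Q_1(U,U)=(2-q)\|U\|_{\mathcal D^{1,2}}^2<0$ yet is not in the kernel. The functions $w_k$ carry no first-variation information---they are not critical points of anything---so there is no Euler--Lagrange equation to pass to the limit.

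The repair is to invoke the half-space coercivity directly: once the $W_\al$-orthogonalities pass to the limit and give $w_\infty\in E_1$, the spectral-gap statement $Q_1(w,w)\ge c_1\|w\|^2$ on $E_1$ (this is the paper's Lemma~\ref{lem:cowhole}, and it is precisely what the nondegeneracy of the bubble encodes) combined with $Q_1(w_\infty,w_\infty)\le 0$ forces $w_\infty=0$. With that correction your argument works, and is then a contradiction-style repackaging of the paper's route. The paper proceeds more directly: it first isolates Lemma~\ref{lem:cowhole} (half-space coercivity on $E_1$, reduced to a Steklov eigenvalue analysis on the ball), then proves a perturbation lemma (Lemma~\ref{lem:comanifolds}) saying that if the metric $h_{ij}$, the potential $\Theta$, and the constant $k$ are $\va_0$-close to $\delta_{ij}$, $U_1$, and $(q-1)/S$ respectively, coercivity persists with constant $c_1/2$; Lemma~\ref{lem:positive def1} follows immediately by rescaling by $\lda_\al$ and verifying these closeness hypotheses via Lemma~\ref{lem:small properties} and the smallness of $\delta_0$. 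This bypasses the blow-up and the compactness-of-the-boundary-term argument altogether, at the cost of packaging the same spectral input into a separate lemma.
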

\begin{proof}
It will follow from Lemma \ref{lem:small properties} and Lemma \ref{lem:comanifolds}. 
\end{proof}

\begin{proof}[Proof of Proposition \ref{prop:energy est 1}] Multiplying the both sides of \eqref{w equ} by $w_\al$, we arrive at
\[
 Q_{\al}(w_\al,w_\al)+o(\|w_\al\|^2)=t_\al \int_{B^+_{\delta_\al}} w_\al \Delta_{\hat{g}}U_{\xi_\al,\lda_\al}\,\ud v_{\hat{g}}+\int_{\pa' B^+_{\delta_\al}}w_\al f_\al \,\ud s_{\hat{g}},
\]
where we used
\[
\int_{\pa' B^+_{\delta_\al}} b''|w_\al|^q\,\ud s_{\hat{g}}\leq C\|\nabla_{\hat{g}}w_\al\|^q_{L^2(B^+_{\delta_\al})}=C\|w_\al\|^2\|w_\al\|^{q-2},
\]
and
\[
 \int_{\pa' B^+_{\delta_\al}} b'|\Theta_\al|^{q-3}|w_\al|^3\leq C\|w_\al\|^3
\]
if $n=3$ and using Lemma \ref{lem:small properties}. By the H\"older inequality, Sobolev inequality, and Lemma \ref{lem:positive def1}, we have
\[
 \|w\|\leq  C\Big\{ \|\Delta_{\hat{g}}U_{\xi_\al, \lda_\al}\|_{L^{2^{*'}}(B^+_{\delta_\al})}+\al \|u_\al\|_{L^r(\pa M)}^{2-r}\|u_\al^{r-1}\|_{L^{q'}(\pa'B^+_{\delta_\al})}+
\mu_\al^{\frac{n-2}{2}}\|U_{\xi_\al, \lda_\al}^{q-2} \|_{L^{q'}(\pa'B^+_{\delta_\al})}\Big\}
\]
with $q'=r=q/(q-1)$.
Since $\Delta U_{\xi_\al, \lda_\al}=0$, a direct computation in Fermi coordinate system yields
\be\label{lap bound}
\begin{split}
& \|\Delta_{\hat{g}}U_{\xi_\al, \lda_\al}\|_{L^{2^{*'}}(B^+_{\delta_\al})}\\
 &=\|\Delta_{\hat{g}}U_{\xi_\al, \lda_\al}- \Delta U_{\xi_\al, \lda_\al} \|_{L^{2^{*'}}(B^+_{\delta_\al})}\\
&\leq \|(\hat{g}^{ij}-\delta^{ij})\frac{\pa^2 U_{\xi_\al, \lda_\al}}{\pa z_i\pa z_j}-\hat{g}^{ij}\hat{\Gamma}_{ij}^k\frac{\pa u}{\pa z_k}\|_{L^{2^{*'}}(B^+_{\delta_\al})}\\
&\leq C\Big(\int_{B^+_{\delta_\al}}\big(|z|\lda^{(n-2)/2}_\al(|z'-\xi_\al'|^2+(z_n+\lda_0\lda_\al)^2)^{-n/2}\big)^{2^{*'}}\,\ud z\Big)^{1/2^{*'}} \\
&+ C\Big(\int_{B^+_{\delta_\al}}\big(\lda^{(n-2)/2}_\al(|z'-\xi_\al'|^2+(z_n+\lda_0\lda_\al)^2)^{-(n-1)/2}\big)^{2^{*'}}\,\ud z\Big)^{1/2^{*'}} \\
&\leq C\lda_\al\Big(\int_{B^+_{\lda_\al^{-1}}}\big(|y+\lda_\al^{-1}\xi_\al|(|y'|^2+(y_n+\lda_0)^2)^{-n/2}\big)^{2^{*'}}\,\ud y\Big)^{1/2^{*'}} \\
&+ C\lda_\al\Big(\int_{B^+_{\lda_\al^{-1}}}\big((|y'-\lda_\al^{-1}\xi_\al|^2+(y_n+\lda_0)^2)^{-(n-1)/2}\big)^{2^{*'}}\,\ud y\Big)^{1/2^{*'}} \\
&\leq C\lda_\al \|U_1^{\frac{n-1}{n-2}}\|_{L^{2^{*'}}(B^+_{\lda_\al^{-1}})},
\end{split}
\ee
where $2^*=\frac{2n}{n-2}$, $2^{*'}=\frac{2n}{n+2}$ and we used the variables transformation $y=\lda_\al^{-1}(z-\xi_\al)$ and the fact $\lda_\al^{-1}|\xi_\al|\to 0$ as $\al \to \infty$.
By Proposition \ref{prop:uniform estimates}, we have $u_\al \leq CU_{\mu_\al}$. Hence
\[
\|u_\al^{r-1}\|_{L^{q'}(\pa'B^+_{\delta_\al})}\leq C\|U_{\mu_\al}^{r-1}\|_{L^{q'}(\pa'B^+_{\delta_\al})}\leq \mu_\al^{n-1-\frac{n-2}{2}r}\|U_1^{r-1}\|_{L^{q'}(\pa'B^+_{\mu^{-1}_\al})}.
\]
Finally, it is clear that
\[
\|U_{\xi_\al, \lda_\al}^{q-2} \|_{L^{q'}(\pa'B^+_{\delta_\al})}\leq C\mu_\al^{\frac{n-2}{2}}
\|U_{1}^{q-2} \|_{L^{q'}(\pa'B^+_{\mu_\al^{-1}})}.
\]
Therefore, we obtained the estimate for $\|w_\al\|$.

Multiplying \eqref{w equ} by $\sigma_\al$ and integrating over $B_{\delta_\al}^+$, we find that
\[
\begin{split}
|t_\al(\ell_\al t_{\al}^{q-2}-\frac{1}{S})|&\int_{\pa' B_{\delta_\al}^+}U^{q-1}_{\xi_\al,\lda_\al}\sigma_\al\,\ud s_{\hat{g}}\\
\leq & C\Big( \int_{B_{\delta_\al}^+} |\Delta_{\hat{g}}U_{\xi_\al,\lda_\al} \sigma_\al|\,\ud v_{\hat{g}}+\int_{\pa' B_{\delta_\al}^+}U^{q-1}_{\xi_\al,\lda_\al}|w_\al|\,\ud s_{\hat{g}}\\&
+\al\|u_\al\|_{L^r(\pa M)}^{2-r}\int_{\pa' B_{\delta_\al}^+}U^{r}_{\xi_\al,\lda_\al}\,\ud s_{\hat{g}} +\mu_\al^{(n-2)/2}
\int_{\pa' B_{\delta_\al}^+}U^{q-1}_{\xi_\al,\lda_\al}\,\ud s_{\hat{g}}\Big),
\end{split}
\]
where we used $|\sigma_\al|+|\Theta_\al|+|w_\al|+u_\al\leq CU_{\xi_\al,\lda_\al}$. Since $\int_{\pa' B_{\delta_\al}^+}U^{q-1}_{\xi_\al,\lda_\al}\sigma_\al\,\ud s_{\hat{g}}\geq C^{-1}>0$, the estimate of $|\ell_\al t_{\al}^{q-2}-\frac{1}{S}|$ follows by H\"older inequality and the estimate for $\|w_\al\|$.
\end{proof}

\subsection{Proof of Lemma \ref{lem:positive def1}}

Let  $\mathcal{D}^{1,2}(\R^n_+)$ be the closure of $C_c^\infty(\R^n_+\cup \pa \R^n_+)$ under the norm
\[
 \|u\|_{\mathcal{D}^{1,2}(\R^n_+)}=\left(\int_{\R^n_+}|\nabla u|^2\,\ud y\right)^{1/2}.
\]
In fact, $\mathcal{D}^{1,2}(\R^n_+)$ is a Hilbert space with the inner product
$$\langle \varphi,\phi\rangle_1:=\int_{\R^n_+}\nabla \varphi \nabla \phi\,\ud y$$ for any $\varphi,\phi\in \mathcal{D}^{1,2}(\R^n_+)$. Define the functional
\[
 Q_1(\varphi, \phi):=\int_{\R^n_+}\nabla \varphi \nabla \phi\,\ud y-\frac{q-1}{S}\int_{\pa \R^n_+} U_{1}^{q-2}\varphi \phi\,\ud y'
\]
for all $\varphi, \phi\in E_0$,  where $y=(y',0)$ and
\[
\begin{split}
 E_1=\Big\{w\in \mathcal{D}^{1,2}(\R^n_+): &\quad  \langle \frac{\pa U_{\xi,1}}{\pa \xi_i}\Big|_{\xi=0}, w\rangle_1= \langle \frac{\pa U_{0,\lda}}{\pa \lda}\Big|_{\lda=1}, w\rangle_1
= \langle U_{0,1}, w\rangle_1=0,\\& i=1,2,\cdots, n-1 \Big\}.
\end{split}
\]
Then we have
\begin{lem}
\label{lem:cowhole}
There exists a constant $c_1>0$ depending only $n$ such that
\[
 Q_1(w, w)\geq  c_1\|w\|^2_{\mathcal{D}^{1,2}(\R^n_+) },
\]
for all $w\in E_1$.
\end{lem}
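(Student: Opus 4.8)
The plan is to read $Q_1$ as the quadratic form of the linearization of the limiting trace equation \eqref{limt equ} at the bubble $U_1$, and to deduce coercivity on $E_1$ from the non-degeneracy of $U_1$ together with a spectral gap.

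\emph{Step 1: reduction to harmonic functions.} Given $w\in E_1$, let $\tilde w\in\mathcal D^{1,2}(\R^n_+)$ be the harmonic function with the same trace on $\pa\R^n_+$ as $w$. Since $U_{0,1}$, $\pa_{\xi_i}U_{\xi,1}|_{\xi=0}$ ($1\le i\le n-1$) and $\pa_\lda U_{0,\lda}|_{\lda=1}$ are harmonic, an integration by parts shows that each inner product $\langle\,\cdot\,,w\rangle_1$ in the definition of $E_1$ depends only on the boundary trace of $w$; hence $\tilde w\in E_1$. Since $w-\tilde w$ has vanishing trace and is $\langle\cdot,\cdot\rangle_1$-orthogonal to $\tilde w$,
\[
Q_1(w,w)=Q_1(\tilde w,\tilde w)+\|w-\tilde w\|_{\mathcal D^{1,2}(\R^n_+)}^2,\qquad \|w\|_{\mathcal D^{1,2}(\R^n_+)}^2=\|\tilde w\|_{\mathcal D^{1,2}(\R^n_+)}^2+\|w-\tilde w\|_{\mathcal D^{1,2}(\R^n_+)}^2 ,
\]
so it suffices to prove the estimate for harmonic $w\in E_1$, with any constant $c_1\le1$.

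\emph{Step 2: a weighted Steklov eigenvalue problem.} For harmonic $w$ with trace $\varphi$ one has $\|w\|_{\mathcal D^{1,2}(\R^n_+)}^2=\|\varphi\|_{\dot H^{1/2}(\R^{n-1})}^2$, and, using $-\pa_{y_n}U_1=\tfrac1S U_1^{q-1}$ and differentiating the boundary equation satisfied by $U_{\xi,\lda}$ in the parameters $\xi,\lda$, the defining relations of $E_1$ become
\[
\int_{\R^{n-1}}U_1^{q-2}U_1\,\varphi\,\ud y'=0,\qquad \int_{\R^{n-1}}U_1^{q-2}\,(\pa_{\xi_i}U_{\xi,1}|_{\xi=0})\,\varphi\,\ud y'=0,\qquad \int_{\R^{n-1}}U_1^{q-2}\,(\pa_\lda U_{0,\lda}|_{\lda=1})\,\varphi\,\ud y'=0 ;
\]
that is, $\varphi$ is orthogonal in $L^2(\R^{n-1};U_1^{q-2}\,\ud y')$ to $U_1$ and to the $n$ fields $\pa_{\xi_i}U_{\xi,1}|_{\xi=0}$, $\pa_\lda U_{0,\lda}|_{\lda=1}$. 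I would then study the eigenvalue problem $-\Delta\phi=0$ in $\R^n_+$, $-\pa_{y_n}\phi=\mu\,\tfrac{q-1}S\,U_1^{q-2}\phi$ on $\pa\R^n_+$. Because $U_1^{q-2}(y')=\lda_0^{2}(|y'|^2+\lda_0^{2})^{-1}\in L^{n-1}(\R^{n-1})$, the Sobolev embedding $\dot H^{1/2}(\R^{n-1})\hookrightarrow L^{\frac{2(n-1)}{n-2}}(\R^{n-1})$ plus a cutoff argument give that the trace map $\dot H^{1/2}(\R^{n-1})\to L^2(\R^{n-1};U_1^{q-2}\,\ud y')$ is compact, so the spectrum is a discrete sequence tending to $+\infty$. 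The positive function $U_1$ is an eigenfunction, hence the lowest one, with eigenvalue $\tfrac1{q-1}=\tfrac{n-2}n<1$; and differentiating the family $U_{\xi,\lda}$ shows that the $n$ fields above are eigenfunctions with eigenvalue $1$.

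\emph{Step 3: non-degeneracy and conclusion.} The crucial point is that the eigenvalue $\mu=1$ has multiplicity exactly $n$ and that the next eigenvalue exceeds $1$. I would obtain this from conformal invariance: a conformal diffeomorphism $\R^n_+\to B^n$ carrying $\pa\R^n_+$ to $\mathbb{S}^{n-1}$ sends $U_1$ to a constant and converts the above problem into the standard Steklov problem $-\Delta\phi=0$ in $B^n$, $\pa_\nu\phi=\sigma\phi$ on $\mathbb{S}^{n-1}$, up to an increasing affine change of spectral parameter; the Steklov eigenvalues on the ball are $\sigma=k$ ($k=0,1,2,\dots$) with eigenspaces the degree-$k$ spherical harmonics on $\mathbb{S}^{n-1}$. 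Matching $k=0$ with the direction $U_1$ (eigenvalue $\tfrac{n-2}n$) and $k=1$ with the $n$ translation/dilation fields (eigenvalue $1$), the affine map is forced to be $\mu=\tfrac{n-2+2k}n$; hence $\mu=1$ has multiplicity equal to the dimension of the degree-one spherical harmonics on $\mathbb{S}^{n-1}$, namely $n$ — exactly accounted for by the $n$ explicit fields — while the next eigenvalue is $\mu_*=\tfrac{n+2}n>1$. Therefore, for harmonic $w\in E_1$ the trace $\varphi$ lies in the closed span of eigenfunctions with eigenvalue $\ge\mu_*$, so $\tfrac{q-1}S\int_{\R^{n-1}}U_1^{q-2}\varphi^2\,\ud y'\le\mu_*^{-1}\|w\|_{\mathcal D^{1,2}(\R^n_+)}^2$ and $Q_1(w,w)\ge(1-\mu_*^{-1})\|w\|_{\mathcal D^{1,2}(\R^n_+)}^2$. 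Taking $c_1=1-\mu_*^{-1}=\tfrac2{n+2}$, which depends only on $n$, and reversing Step 1 completes the proof. The main obstacle is the non-degeneracy/spectral-gap assertion in Step 3; this is exactly where the conformal identification with the explicitly diagonalizable Steklov problem on the ball is essential.
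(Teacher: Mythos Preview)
Your proposal is correct and follows exactly the route the paper indicates: the paper's proof consists only of the sentence ``Lemma~\ref{lem:cowhole} follows from an analysis of the eigenvalues of $\Delta v=0$ in $B_1$, $\partial v/\partial\nu=\lambda v$ on $\partial B_1$; the details can be found in, e.g., \cite{Del2},'' and you have filled in precisely that analysis. The conformal identification in your Step~3 can be made rigorous: writing $\hat g=U_1^{4/(n-2)}g_{\mathrm{Euc}}=\lambda_0^4|y+\lambda_0 e_n|^{-4}g_{\mathrm{Euc}}$ shows $\hat g$ is the pullback of the Euclidean metric under the inversion at $-\lambda_0 e_n$ of radius $\lambda_0$, which maps $\R^n_+$ isometrically onto a Euclidean ball of radius $\lambda_0/2$; the conformal covariance of $\partial_\nu+\tfrac{n-2}{2}h$ then gives the affine relation $\sigma=\tfrac{n\mu-(n-2)}{(n-2)S}=\tfrac{2k}{\lambda_0}$, i.e.\ $\mu=\tfrac{n-2+2k}{n}$, confirming your formula and the value $c_1=\tfrac{2}{n+2}$.
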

Lemma \ref{lem:cowhole} follows from an analysis of the eigenvalues of
\[
 \begin{cases}
  \Delta v=0,& \quad \mbox{in }B_1,\\
\frac{\pa v}{\pa \nu}=\lda v,&\quad  \mbox{on }\pa B_1.
 \end{cases}
 \]
The details can be found in, e.g., \cite{Del2}.
\begin{lem}
\label{lem:comanifolds}
For $R>0$, $x\in \pa \R^n$ with $|x|\leq R/10$, let $h_{ij}$ be a Riemannian metric on $B^+_R(x)$, $k>0$ and $\Theta\in L^q(\pa'B_R)$. Denote $Q_2$  as the continuous bilinear form on $H_{0,L}(B^+_R(x))\times H_{0,L}(B^+_R(x))$
\[
 Q_2(\varphi, \phi)=\int_{B^+_R(x)}\nabla_{h}\varphi\nabla_h\phi\,\ud v_h-k\int_{\pa' B^+_R(x)}|\Theta|^{q-3}\Theta \varphi \phi,\ud s_h.
\]
There exists a small positive $\va_0$ depending only $n$ such that if
\[
 \|\Theta-U_{1}\|_{L^q(\pa' B^+_R(x))}+|k-\frac{q-1}{S}|+\|h_{ij}-\delta_{ij}\|_{L^\infty(B^+_R(x))}\leq \va_0,
\]
then
\[
 Q_2(w,w)\geq \frac{c_1}{2}\int_{B^+_R(x)}|\nabla_h w|^2\,\ud v_h,
\]
for all $w\in E_2$, where
\[
 \begin{split}
 E_2=\Big\{w\in H_{0,L}(B^+_R(x)): &\quad \langle U_{0,1}, w\rangle_1\leq \va_0\|w\|_{h},  \langle \frac{\pa U_{0,\lda}}{\pa \lda}\Big|_{\lda=1}, w\rangle_1
\leq \va_0\|w\|_{h} \\&  \langle \frac{\pa U_{\xi,1}}{\pa \xi_i}\Big|_{\xi=0}, w\rangle_1\leq \va_0\|w\|_{h},
, i=1,2,\cdots, n-1 \Big\}.
\end{split}
\]
\end{lem}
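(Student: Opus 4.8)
The plan is to deduce this from the flat, exactly-orthogonal non-degeneracy statement of Lemma \ref{lem:cowhole} by extending $w$ by zero to all of $\R^n_+$ and controlling every error term produced by the smallness hypothesis $\|h_{ij}-\delta_{ij}\|_{L^\infty}+|k-\frac{q-1}{S}|+\|\Theta-U_1\|_{L^q(\pa'B^+_R(x))}\le\va_0$. Given $w\in E_2$, let $\tilde w\in\mathcal D^{1,2}(\R^n_+)$ be its extension by $0$. Working in the coordinates of $B^+_R(x)$, the bound on $h_{ij}-\delta_{ij}$ makes $dv_h$, $ds_h$ and $h^{ij}\sqrt{\det h}-\delta^{ij}$ all $O(\va_0)$-close to the Euclidean data, so
\[
(1-C\va_0)\|w\|_h^2\le\|\tilde w\|_{\mathcal D^{1,2}}^2\le(1+C\va_0)\|w\|_h^2,\qquad\Big|\int_{B^+_R(x)}|\nabla_h w|^2\,\ud v_h-\int_{\R^n_+}|\nabla\tilde w|^2\,\ud y\Big|\le C\va_0\|w\|_h^2 .
\]

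The delicate point is the boundary integral of $Q_2$. I would split, on $\pa'B^+_R(x)\subset\pa\R^n_+$,
\[
k|\Theta|^{q-3}\Theta\,\ud s_h-\tfrac{q-1}{S}U_1^{q-2}\,\ud y'=k|\Theta|^{q-3}\Theta\,(\ud s_h-\ud y')+k\big(|\Theta|^{q-3}\Theta-U_1^{q-2}\big)\,\ud y'+\big(k-\tfrac{q-1}{S}\big)U_1^{q-2}\,\ud y',
\]
and estimate the three contributions. For the first, $\ud s_h-\ud y'=O(\va_0)\ud y'$ and $\int_{\pa'B^+_R(x)}|\Theta|^{q-2}w^2\le\|\Theta\|_{L^q}^{q-2}\|w\|_{L^q(\pa'B^+_R(x))}^2\le C\|w\|_h^2$ by Hölder (exponents $\frac q{q-2},\frac q2$), the trace Sobolev inequality, and $\|\Theta\|_{L^q}\le\|U_1\|_{L^q}+\va_0$. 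For the second, since $q-2=\frac2{n-2}\in(0,1]$ for $n\ge4$ we have $\big||a|^{q-3}a-|b|^{q-3}b\big|\le|a-b|^{q-2}$, hence this term is $\le C\|\Theta-U_1\|_{L^q(\pa'B^+_R(x))}^{q-2}\|w\|_{L^q(\pa'B^+_R(x))}^2\le C\va_0^{q-2}\|w\|_h^2$ (for $n=3$, where $q-2=2$, use $|a^2-b^2|=|a-b||a+b|$ and a three-factor Hölder inequality to the same effect). For the third, $U_1^{q-2}(y')\simeq(1+|y'|^2)^{-1}$, so the fractional Hardy inequality on $\R^{n-1}$ gives $\int_{\pa\R^n_+}U_1^{q-2}\tilde w^2\le C\|\tilde w\|_{\mathcal D^{1,2}}^2$, making this contribution $O(\va_0)\|w\|_h^2$; the same inequality shows $Q_1$ is a bounded bilinear form on $\mathcal D^{1,2}(\R^n_+)$. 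Collecting terms, with $\gamma=\gamma(n)>0$,
\[
Q_2(w,w)=Q_1(\tilde w,\tilde w)+O(\va_0^{\gamma})\|w\|_h^2 .
\]

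It remains to handle the fact that Lemma \ref{lem:cowhole} gives positivity of $Q_1$ only on the space $E_1$ of functions \emph{exactly} orthogonal to the null directions, while $\tilde w$ is only almost orthogonal. Let $N\subset\mathcal D^{1,2}(\R^n_+)$ be the $(n+1)$-dimensional span of $U_{0,1}$, $\pa_\lda U_{0,\lda}|_{\lda=1}$ and $\pa_{\xi_i}U_{\xi,1}|_{\xi=0}$ (all in $\mathcal D^{1,2}$ for $n\ge3$), and decompose $\tilde w=w_0+w_1$ with $w_0\in N$, $w_1\in E_1=N^\perp$. Since $\langle v,\tilde w\rangle_1=\langle v,w\rangle_1$ for each generator $v$ of $N$, the relations defining $E_2$ together with $\|w\|_h\le(1+C\va_0)\|\tilde w\|_{\mathcal D^{1,2}}$ force $\|w_0\|_{\mathcal D^{1,2}}\le C\va_0\|\tilde w\|_{\mathcal D^{1,2}}$, hence $\|w_1\|_{\mathcal D^{1,2}}^2\ge(1-C\va_0)\|\tilde w\|_{\mathcal D^{1,2}}^2$. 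Expanding $Q_1(\tilde w,\tilde w)=Q_1(w_1,w_1)+2Q_1(w_1,w_0)+Q_1(w_0,w_0)$ and using Lemma \ref{lem:cowhole} and boundedness of $Q_1$ gives $Q_1(\tilde w,\tilde w)\ge c_1\|w_1\|_{\mathcal D^{1,2}}^2-C\va_0\|\tilde w\|_{\mathcal D^{1,2}}^2\ge(c_1-C\va_0)\|\tilde w\|_{\mathcal D^{1,2}}^2$. Combining with the last display and $\|\tilde w\|_{\mathcal D^{1,2}}^2\ge(1-C\va_0)\int_{B^+_R(x)}|\nabla_h w|^2\,\ud v_h$ yields $Q_2(w,w)\ge(c_1-C\va_0-C\va_0^{\gamma})\int_{B^+_R(x)}|\nabla_h w|^2\,\ud v_h\ge\frac{c_1}2\int_{B^+_R(x)}|\nabla_h w|^2\,\ud v_h$ once $\va_0$ is chosen small depending only on $n$.

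The one genuinely nontrivial ingredient is Lemma \ref{lem:cowhole} (the spectral gap of the linearization at the standard bubble), which we are allowed to assume; everything else is perturbation bookkeeping. The step that needs the most care is the estimate of $\int|\Theta|^{q-3}\Theta w^2$: the low Hölder exponent $q-2$ of $t\mapsto|t|^{q-3}t$ for large $n$ is what produces the error rate $\va_0^{q-2}$ rather than $\va_0$ (harmless since $q>2$), and one must apply the trace Sobolev and Hardy inequalities with the correct exponents so that all constants depend only on $n$.
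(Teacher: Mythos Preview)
Your proof is correct and follows the same strategy as the paper's: extend $w$ by zero to $\R^n_+$, project off the small component along the $(n{+}1)$-dimensional null space $N$ so that the remainder lies in $E_1$, apply Lemma~\ref{lem:cowhole}, and absorb all perturbation terms using the smallness hypothesis together with H\"older and the trace Sobolev inequality. The paper's own proof is a three-line sketch of exactly this outline (it writes the projection as $\tilde w=w-\sum\delta_j\cdot(\text{null direction})_j$ with $|\delta_j|=O(\va_0\|w\|_h)$ and then invokes Lemma~\ref{lem:cowhole} and ``the Sobolev trace inequality''); your version simply supplies the bookkeeping it omits. One minor remark: your appeal to a fractional Hardy inequality for $\int_{\pa\R^n_+}U_1^{q-2}\tilde w^2$ is not needed, since $U_1\in L^q(\pa\R^n_+)$ and H\"older with exponents $\tfrac{q}{q-2},\tfrac{q}{2}$ together with the trace inequality already give $\int U_1^{q-2}\tilde w^2\le\|U_1\|_{L^q}^{q-2}\|\tilde w\|_{L^q(\pa\R^n_+)}^2\le C\|\tilde w\|_{\mathcal D^{1,2}}^2$.
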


\begin{proof}
 From the assumptions, there exist $\delta_1,\cdots,\delta_{n+1}$ satisfying $|\delta_j|=O(\va_0\|w\|_h)$ for every $j$ such that $$\tilde w=w-\sum_{j=1}^{n-1}\delta_{j}\frac{\pa U_{\xi,1}}{\pa \xi_i}\Big|_{\xi=0}
-\delta_n\frac{\pa U_{0,\lda}}{\pa \lda}\Big|_{\lda=1}-\delta_{n+1} U_{0,1}$$ belongs to $E_1$. It follows from Lemma \ref{lem:cowhole} that
\[
 Q_1(\tilde w, \tilde w)\geq  c_1\| \tilde w\|_{\mathcal{D}^{1,2}(\R^n_+) }.
\]
The lemma follows by choosing sufficiently small $\va_0$ and making use of the Sobolev trace inequality.
\end{proof}

\small

\bigskip

\noindent T. Jin

\noindent Department of Mathematics, The University of Chicago\\
5734 S. University Avenue, Chicago, IL 60637, USA\\[1mm]
Email: \textsf{tj@math.uchicago.edu}

\medskip

\noindent J. Xiong

\noindent Beijing International Center for Mathematical Research, Peking University\\
Beijing 100871, China\\[1mm]
Email: \textsf{jxiong@math.pku.edu.cn}

\end{document}